\documentclass[11pt]{amsart}

\usepackage{amsmath,amssymb,amsthm,mathtools,mathrsfs}
\usepackage[letterpaper,margin=1in]{geometry}
\usepackage{microtype}
\usepackage[colorlinks=true,linkcolor=blue,citecolor=blue,urlcolor=blue]{hyperref}

\allowdisplaybreaks
\numberwithin{equation}{section}
\newtheorem{theorem}{Theorem}[section]
\newtheorem{lemma}[theorem]{Lemma}
\newtheorem{corollary}[theorem]{Corollary}
\newtheorem{proposition}[theorem]{Proposition}
\theoremstyle{definition}
\newtheorem{definition}{Definition}
\theoremstyle{remark}
\newtheorem{remark}[theorem]{Remark}

\newcommand{\C}{\mathbb C}
\newcommand{\Hh}{\mathbb H}
\newcommand{\N}{\mathbb N}
\newcommand{\Q}{\mathbb Q}
\newcommand{\R}{\mathbb R}
\newcommand{\Z}{\mathbb Z}
\newcommand{\Eodd}{\mathcal E^{\mathrm o}}
\newcommand{\Eeven}{\mathcal E^{\mathrm e}}
\newcommand{\Cone}{\mathcal C}
\newcommand{\ve}{\varepsilon}
\newcommand{\1}{\mathbf 1}
\newcommand{\sgnstd}{\operatorname{sgn}}

\title[Nahm sums and indefinite theta vectors]{Indefinite theta vectors and generalized Nahm sums of arbitrary rank}
\author[Dandan Chen]{Dandan Chen}
\address{Department of Mathematics, Shanghai University, People's Republic of China}
\address{Newtouch Center for Mathematics of Shanghai University, Shanghai, People's Republic of China}
\email{mathcdd@shu.edu.cn}
\author[Siyu Yin]{Siyu Yin$^{*}$}
\address{Department of Mathematics, East China Normal University, People's Republic of China}
\email{siyuyin0113@126.com, 52300155017@stu.ecnu.edu.cn}
\thanks{$^{*}$Corresponding author.}
\subjclass[2010]{11P84, 11F03, 11F27}
\keywords{Generalized Nahm sums, Bailey pairs, indefinite theta functions,
vector-valued modular transformations}
\date{}
\hypersetup{pdftitle={Indefinite theta vectors and generalized Nahm sums of arbitrary rank},pdfauthor={Dandan Chen, Siyu Yin}}

\begin{document}
\begin{abstract}
We construct indefinite theta representations for a family of signed
generalized Nahm sums of arbitrary rank \(r\geq2\). After explicit
normalization and boundary correction, one signed branch gives the
even components of a \((2r-1)\)-component holomorphic theta vector
associated with a lattice of signature \((1,1)\). Euler-type multiple
sums provide the odd components, yielding formulas uniform in both
the rank and the component index. Using Zwegers' theory, we determine
the weight-one modular transformation laws of the real-analytic
completion, with an explicit finite sine matrix for the
\(S\)-transformation. We decompose the nonholomorphic correction into
a finite sum of products of unary theta functions and one-dimensional error-function
series, and compute the antiholomorphic derivative of the completion.
Together with the completed transformation laws, these formulas give
the inhomogeneous transformation law of the holomorphic vector. We
also evaluate a linear combination of two signed branches as a difference of Jacobi product quotients. This identity and further
evaluations for two families of symmetrizable matrices and their duals,
combined with known results, give multiple-sum representations for
every component of four associated product vectors. These representations
are finite combinations of generalized Nahm sums with the prescribed
matrices and symmetrizers; in the dual families, half-period translates are absorbed into
signs and constant phase factors.
\end{abstract}
\maketitle

\section{Introduction and main results}

Rogers--Ramanujan identities relate multiple $q$-hypergeometric
series to infinite products and modular functions. Nahm sums provide
a natural setting for studying such identities: their quadratic
exponents arise in fermionic character formulas, while their
modularity is related to dilogarithm identities and the Bloch group
\cite{Nahm1994,Nahm1995,Nahm2007,Zagier2007}. In a vector-valued
setting, an explicit product evaluation raises a further question.
Can all components of the associated vector be represented by
multiple sums without changing the prescribed quadratic matrix and
symmetrizer?

Throughout, $\N:=\{0,1,2,\ldots\}$,
$\Hh:=\{\tau\in\C:\operatorname{Im}(\tau)>0\}$, and
$q=e^{2\pi i\tau}$, where $\tau=x+iy\in\Hh$.
We interpret $q^\alpha$ as $e^{2\pi i\alpha\tau}$ for real
$\alpha$. We use the standard notation
\[
 (a;q)_n:=\prod_{k=0}^{n-1}(1-aq^k),\qquad
 (a_1,\ldots,a_t;q)_n:=\prod_{k=1}^t(a_k;q)_n
\]
where $n\in\mathbb{N}\cup \{\infty\}$ and $|q|\textless 1$. Empty sums and products are $0$ and $1$, respectively. If
$\mathbf d:=(d_1,\ldots,d_r)^T\in\Z_{>0}^r$ and
$\mathbf D:=\operatorname{diag}(d_1,\ldots,d_r)$, the generalized
Nahm sum introduced by Mizuno \cite[Eq.~(5)]{Mizuno2025} is
\begin{equation}
 \widetilde f_{\mathbf A,\mathbf b,c,\mathbf d}(q)
 :=\sum_{\mathbf n\in\N^r}
 \frac{q^{\frac12\mathbf n^T\mathbf A\mathbf D\mathbf n+
                 \mathbf n^T\mathbf b+c}}
 {\prod_{a=1}^r(q^{d_a};q^{d_a})_{n_a}},
 \label{eq:generalized-nahm}
\end{equation}
where $\mathbf A\in\Q^{r\times r}$,
$\mathbf b\in\Q^r$, $c\in\Q$, and $\mathbf A\mathbf D$ is
symmetric positive definite. The ordinary Nahm sum
$f_{\mathbf A,\mathbf b,c}$ is the case
$\mathbf d=(1,\ldots,1)^T$. The data are called a modular triple
in the ordinary case, or a modular quadruple in the generalized
case, when the corresponding series is a modular function. A signed
sum has an additional factor $\prod_a\varepsilon_a^{n_a}$, with
$\varepsilon_a\in\{1,-1\}$, in its summand.

Zagier \cite[Section~3A]{Zagier2007} determined the seven rank-one
modular triples. In higher rank, Vlasenko and Zwegers
\cite{VlasenkoZwegers2011} gave counterexamples to the proposed
equivalence in Nahm's conjecture. Explicit evaluations include the
rank-two and rank-three identities of Cherednik and Feigin
\cite{CherednikFeigin2013}, Wang
\cite{WangRankTwoZagier,WangRankThreeZagier}, and Cao, Rosengren, and
Wang \cite{CaoRosengrenWang2024}. For symmetrizable matrices, Mizuno
\cite{Mizuno2025} proposed modularity and vector-valued
transformation conjectures; product evaluations and transformation
formulas for these low-rank families were established by B.~Wang
and L.~Wang
\cite{WangWangRankThreeI,WangWangRankThreeII,WangWangRankTwo}.
More recently, Du, Ji, Shen, and Xu \cite{DuJiShenXu2026} obtained
three families of modular generalized Nahm sums of arbitrary rank
$r\geq2$, with indices $(2,\ldots,2,1)$ and $(1,\ldots,1,2)$,
and studied the associated vector-valued transformations.

These evaluations do not provide multiple-sum representations for
every component of the four product vectors considered in
\cite{DuJiShenXu2026}; the missing ranges are specified in their
Problem~1.13. Related signed sums lead to a second issue. Their
Hecke-type double sums involve an indefinite quadratic form, so a
holomorphic $q$-series must be distinguished from its real-analytic
completion. Moreover, changing from a summation cone to a shifted
lattice can introduce boundary terms. Thus a component formula alone
does not determine either the full theta vector or its modular
transformation law.

We address these questions by constructing a $(2r-1)$-component
theta vector, representing all of its components by multiple sums,
and evaluating the signed combinations needed for the remaining
product components. The first two theorems give the even and odd
components of the same holomorphic vector. The third determines the
modular transformations of its completion. The product cancellation
in the first theorem also enters the fourth theorem, which recovers
all components of the four prescribed product vectors.

For the product formulas, define
\begin{align*}
 j(z;q)&:=(z;q)_\infty(q/z;q)_\infty(q;q)_\infty,
 &J_{a,m}&:=j(q^a;q^m),\\
 J_m&:=(q^m;q^m)_\infty,
 &\zeta_m&:=e^{2\pi i/m}.
\end{align*}
Here $m>0$ and $a\in\R$; when $a,m$ are integers, let
$\widetilde J_{a,m}:=j((-q)^a;(-q)^m)$.
The sign function has its usual values $1,0,-1$ on positive,
zero, and negative arguments, respectively.

\begin{definition}
\label{def:fhat}
Let $r\geq2$ be an integer and $\tau\in\Hh$. For
$1\leq\ell\leq2r-1$, define
\begin{equation}
 \widehat f_\ell^{(r)}(\tau)
 :=\sum_{\substack{u\in\Z+(-1)^{\ell+1}\ell/(4r)\\
                   v\in\Z-1/6}}
 \bigl(\sgnstd(u-v)+\sgnstd(u+v)\bigr)
 q^{2ru^2-3v^2/2}e^{-\pi iv}.
 \label{eq:fhat-scalar}
\end{equation}
The associated vector is
$\widehat{\boldsymbol f}^{(r)}
 :=(\widehat f_1^{(r)},\ldots,\widehat f_{2r-1}^{(r)})^T$.
The circumflex does not denote a real-analytic completion.
\end{definition}

\begin{theorem}
\label{thm:nahm-fbar-hat}
Let $r\geq2$ be an integer. The series
$\widehat f_\ell^{(r)}(\tau)$, $1\leq\ell\leq2r-1$, converge
absolutely and locally uniformly on $\Hh$ and are holomorphic.
For $1\leq j\leq r-1$ and
$\mathbf n=(n_1,\ldots,n_r)^T\in\N^r$, define
$N_a=n_{a+1}+\cdots+n_r$ for $1\leq a\leq r-1$. Then
\begin{align}
 \widehat f_{2j}^{(r)}(\tau)
 ={}&2\zeta_{12}^{-5}
 q^{\frac{(2r-j)^2}{2r}-\frac{25}{24}}J_{1,4}
 \notag\\*[-2pt]
 &\quad\times\sum_{\mathbf n\in\N^r}
 \frac{(-1)^{n_r}
 q^{n_1^2+2n_1n_r+2n_1+2\sum_{a=1}^{r-1}N_a^2+n_r^2
       +2\sum_{a=1}^{j-1}N_a+4\sum_{a=j}^{r-1}N_a}}
 {\bigl(\prod_{a=1}^{r-1}(q^2;q^2)_{n_a}\bigr)(q^4;q^4)_{n_r}}
 \notag\\*
 &+\zeta_{12}^{-5}q^{\frac{j^2}{2r}-\frac1{24}}
 \begin{cases}
 0,&3j<r,\\
 J_{2r-2j-1,\,4r-3},&3j=r,\\
 2J_{2r-2j-1,\,4r-3},&3j>r.
 \end{cases}
 \label{eq:even-explicit}
\end{align}
The two signed branches satisfy the product identity
\begin{align}
 &\sum_{\mathbf n\in\N^r}
 \frac{(-1)^{n_r}(1-q^{2n_1+2n_r+1})
 q^{n_1^2+2n_1n_r+2\sum_{a=1}^{r-1}N_a^2+n_r^2
       +2\sum_{a=1}^{j-1}N_a+4\sum_{a=j}^{r-1}N_a-2n_r}}
 {\bigl(\prod_{a=1}^{r-1}(q^2;q^2)_{n_a}\bigr)(q^4;q^4)_{n_r}}
 \notag\\
 &\hspace{18mm}=
 \frac{q^{2j-2r+2}}{J_{1,4}}
 \bigl(J_{2r-2j-1,\,4r-3}-J_{2r-2j-2,\,4r-3}\bigr).
 \label{eq:sub-eta-Bdual}
\end{align}
\end{theorem}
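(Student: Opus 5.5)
The plan has three stages. First I establish convergence. Then I reduce the multiple sums in \eqref{eq:even-explicit} and \eqref{eq:sub-eta-Bdual} to Hecke-type double sums through a Bailey-pair chain. Finally I match those double sums with the lattice sum \eqref{eq:fhat-scalar}.

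\textbf{Convergence.} The weight $\sgnstd(u-v)+\sgnstd(u+v)$ vanishes unless $|u|\geq|v|$. On that region
\[
2ru^2-\tfrac32 v^2\geq(2r-\tfrac32)u^2\geq\tfrac12(2r-\tfrac32)(u^2+v^2).
\]
Hence the terms are bounded by $e^{-c\,y(u^2+v^2)}$, with $c$ depending only on $r$. This gives absolute and locally uniform convergence, and holomorphy follows by Weierstrass.

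\textbf{Reduction of the multiple sum.}
\begin{itemize}
\item The variables $n_1,\dots,n_{r-1}$ enter only through $N_a=n_{a+1}+\cdots+n_r$ and the factors $(q^2;q^2)_{n_a}$. So the inner sum over $n_2,\dots,n_{r-1}$ (with $n_r$ fixed) is an Andrews--Gordon-type multisum in base $q^2$.
\item I handle it by iterating the Bailey lemma $r-2$ times in base $q^2$. The linear terms $2\sum_{a<j}N_a+4\sum_{a\geq j}N_a$ change the Bailey chain at step $j$, exactly as in Bressoud's or Andrews' treatment of the $j$-dependence in the Andrews--Gordon identities.
\item The seed is a Bailey pair in which the sum over $n_1$ is $\sum_{n_1}q^{n_1^2+2n_1n_r+2n_1}/(q^2;q^2)_{n_1}$. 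This sum equals $(-q^{2n_r+3};q^2)_\infty$, so after normalisation it is an explicit product.
\item I combine this with $(-1)^{n_r}q^{n_r^2}/(q^4;q^4)_{n_r}$. After factoring $(q^4;q^4)_{n_r}=(q^2;q^2)_{n_r}(-q^2;q^2)_{n_r}$, this should give a known Bailey pair relative to $a=q^2$, of Slater type with a $q^{3n^2}$-like $\beta_n$. That pair accounts for the $-3v^2/2$ and the $v\in\Z-1/6$.
\item Inserting the resulting $\alpha_n$ gives a double sum $\sum_{n\ge0}\sum_{|k|\le n}\pm q^{Q(n,k)}$ whose quadratic form has discriminant matching $2ru^2-\tfrac32v^2$. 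The factor $J_{1,4}$ comes out of the product $(-q;q^2)_\infty$-type normalisation.
\end{itemize}

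\textbf{Matching with the lattice sum.} I change variables so that $u\in\Z+\ell/(4r)$ with $\ell=2j$, $v\in\Z-1/6$, and the cone $\{|v|\le u\}$ corresponds to the Hecke range. The factor $\sgnstd(u-v)+\sgnstd(u+v)$ equals $2$ inside the cone, equals $1$ on its boundary $|u|=|v|$, and is odd under $u\mapsto-u$, which accounts for the two halves $u>0$ and $u<0$.

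The Bailey chain produces a sum over a shifted cone whose boundary differs from $|u|=|v|$. The discrepancy is a one-dimensional sum along the lines $u=\pm v$. I evaluate that sum by Jacobi triple product as $J_{2r-2j-1,4r-3}$ with the prefactor $q^{j^2/(2r)-1/24}$. The cases $3j<r$, $3j=r$, $3j>r$ record whether the shifted boundary lies inside the cone, on it, or outside it, giving multiplicity $0$, $1$ (boundary weight) or $2$. The constant $\zeta_{12}^{-5}$ and the exponents are then bookkeeping from $e^{-\pi iv}$ with $v\equiv-1/6$.

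\textbf{Product identity \eqref{eq:sub-eta-Bdual}.} The factor $(1-q^{2n_1+2n_r+1})$ splits the left side into the same branch as above with linear term $-2n_r$, minus the branch with $n_1$-linear term $2n_1$ and shift $+1$. Running both through the same Bailey chain, the two Hecke double sums should telescope. The interior cones cancel and only the two boundary lines survive, each summable by the triple product. This is the ``product cancellation'' the introduction mentions, and it yields $J_{2r-2j-1,4r-3}-J_{2r-2j-2,4r-3}$ divided by the $J_{1,4}$ normalisation.

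\textbf{Main obstacle.} I expect the hardest step to be the exact boundary bookkeeping: aligning the Hecke summation range with the sign weights, including the zero values of $\sgnstd$, and getting the threshold $3j$ versus $r$ right. I would first verify it for $r=2,3$ by $q$-expansion and then do the general case by tracking the linear forms $u\pm v$ explicitly.
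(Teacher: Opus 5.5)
Your plan is essentially the paper's proof. It has the same steps: Euler summation over $n_1$; a Bailey chain whose relative parameter switches at step $j$ (the paper raises Slater's pair G5 to relative parameter $q^2$, chains $r-j-1$ times, lowers by Lovejoy's lemma, then chains $j$ times); a cone comparison in which only the single line $u-v=\frac16-\frac{j}{2r}$ carries the correction, with multiplicity $0,1,2$; and, for the product identity, the same transformation sequence started from the pair G4$^{**}$, whose Hecke sum differs from $q^{1/2}\overline f_{r,j}$ only by endpoint terms summed by the triple product.

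One slip to fix: for $\ell=2j$ the first coordinate runs over $\Z-j/(2r)$, not $\Z+\ell/(4r)$. Because the sign kernel is odd in $u$, your convention would negate the whole identity.
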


The first identity identifies one signed branch with an even theta
component, including its boundary contribution. Expanding
$1-q^{2n_1+2n_r+1}$ in the second identity expresses the other
branch as $q$ times the first plus the displayed product quotient.
Thus both branches have the same indefinite theta part. Their
difference cancels that part and gives the product evaluation used
for the $B^\vee$-family below. The coefficient at $3j=r$ is half
the coefficient at $3j>r$, because the shifted lattice then meets a
cone boundary and $\sgnstd(0)=0$.

The even indices account for $r-1$ components. The next theorem
supplies the remaining $r$ components, using Euler-type sums with
$2r-2$ summation variables rather than the signed rank-$r$ sums.

\begin{theorem}
\label{thm:euler}
For integers $r\geq2$ and $1\leq j\leq r$, the odd components satisfy
\begin{align}
 \widehat f_{2j-1}^{(r)}(\tau)
 ={}&-2\zeta_{12}^{-5}
 q^{\frac{(4r-2j+1)^2}{8r}-\frac{25}{24}}(q;q)_\infty
 \notag\\[-2pt]
 &\quad\times
 \sum_{N_1\geq\cdots\geq N_{2r-2}\geq0}
 \frac{q^{\sum_{a=1}^{2r-2}N_a^2+
          \sum_{a=1}^{2j-2}N_a+2\sum_{a=2j-1}^{2r-2}N_a}}
 {\prod_{a=1}^{2r-3}(q;q)_{N_a-N_{a+1}}}
 \notag\\
 &-2\zeta_{12}^{-5}q^{\frac{(2j-1)^2}{8r}-\frac1{24}}
 \begin{cases}
 0,&3j\leq r+1,\\
 J_{2r-2j,\,4r-3},&3j\geq r+2.
 \end{cases}
 \label{eq:oddmain}
\end{align}
In particular, the boundary term at $j=r$ vanishes, since
$J_{0,4r-3}=0$.
\end{theorem}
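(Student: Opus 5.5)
The plan is to convert the multiple Euler-type sum on the right of \eqref{eq:oddmain} into a Hecke-type double sum, and then match that double sum with the indefinite theta series \eqref{eq:fhat-scalar} at the odd index $\ell=2j-1$.

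\emph{Step 1: reduce the Euler-type sum to one variable.} The sum over $N_1\geq\cdots\geq N_{2r-2}\geq0$ has the Andrews--Gordon shape. I would apply the Bailey lattice or Bailey chain $2r-3$ times, starting from the unit Bailey pair relative to $a=1$ or $a=q$. The choice is dictated by the linear exponent $\sum_{a\le 2j-2}N_a+2\sum_{a\ge 2j-1}N_a$: the first $2j-2$ steps use the ``$a=1$'' normalization and the rest use ``$a=q$''. Equivalently, one can invoke Andrews' multiple-series form of the Rogers--Selberg/Bressoud identity. Either way, the multisum times $(q;q)_\infty$ should collapse to a single sum of the form
\[
\sum_{n\geq0} q^{\,\cdots}\,\beta_n,
\]
where the last Bailey pair $\beta_n$ is the one whose $\alpha_n$ carries the Hecke-type factor. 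Concretely, I would aim for the following. Start with the Bailey pair $\alpha_n=q^{n^2}\sum_{|k|\le n}(-1)^k q^{-k(3k\pm1)/2}$, or a close variant, which is responsible for the form $-3v^2/2$. Iterate the chain $2r-2$ times, which introduces the factor $q^{(2r-2)n^2}$ and so, combined with the initial pair, the form $2ru^2$. The result should be an identity of the shape
\[
(q;q)_\infty\cdot\text{multisum}
=\sum_{n\ge0}\sum_{|k|\le n}(\pm)\,q^{\,Q(n,k)}\bigl(1-q^{\,L(n)}\bigr).
\]

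\emph{Step 2: change variables into the cone.} Substitute $u=n+c_1$ with $c_1=(4r-2j+1)/(4r)$, so that $u\in\Z+(2j-1)/(4r)$ up to sign, and $v=k+1/6$ (or $k-1/6$). Then the condition $|k|\le n$ becomes $|v|<|u|$, which is the cone on which $\sgnstd(u-v)+\sgnstd(u+v)=\pm2$. I would use $u\mapsto -u$ to fold the two terms $1$ and $-q^{L(n)}$ into the two halves $u>0$ and $u<0$. This is where the prefactor $q^{(4r-2j+1)^2/(8r)-25/24}$ and the root of unity $\zeta_{12}^{-5}$, coming from $e^{-\pi i v}$ at $v\equiv-1/6$, are fixed.

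\emph{Step 3: handle the boundary correction; this is the main obstacle.} The lattice condition from the Bailey pair is $|k|\le n$, whereas the theta series uses a strict sign cone on a shifted lattice. Lattice points with $|u|\le|v|$ near the cone edges, or with a mismatched inequality, contribute leftover terms. These terms lie on a line such as $u=\pm v+$const, so they form a unary theta series. I would isolate them by writing the difference between the two summation regions explicitly, and then resum via the Jacobi triple product as $j(q^{2r-2j};q^{4r-3})$. The quadratic form restricted to that line is $2ru^2-\tfrac32u^2\propto(4r-3)/2$, which explains the modulus $4r-3$. The cases $3j\le r+1$ versus $3j\ge r+2$ should arise from whether the shifted lines actually meet lattice points inside the relevant half-region. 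I would check these cases by comparing the offsets $(2j-1)/(4r)$ and $1/6$.

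The vanishing at $j=r$ is then automatic, since $J_{0,4r-3}=0$. Convergence is already covered by Theorem~\ref{thm:nahm-fbar-hat}. Finally, I would verify the result numerically for $r=2$, $j=1,2$ to fix the signs.
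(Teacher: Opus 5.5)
\textbf{Where the gap is.} Your Steps 2 and 3 follow the paper's cone comparison (Lemma~\ref{lem:oddcone}) in outline. Writing the shifted lattice as $((2j-1)/(4r),5/6)^T+(X,Y)^T$, the kernel becomes $\sgnstd(X-Y-\tfrac12)+\sgnstd(X+Y+\tfrac12)$ when $3j\le r+1$, and $\sgnstd(X-Y-\tfrac12)+\sgnstd(X+Y+\tfrac32)$ when $3j\ge r+2$. After $X=n+1$ it equals $2\epsilon(n,t)$, plus $2\1_{\{t=-n-2\}}$ in the second case. The line sum gives $-qJ_{2r-2j,4r-3}$, and no half-weight term arises because $3(2j-1)=2r$ is impossible by parity. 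The gap is Step 1. That is where the real content lies, and as written it does not produce the stated multisum.

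\textbf{The seed pair.} The unit Bailey pair cannot be the seed. Iterating the chain on it gives Andrews--Gordon sums, whose innermost variable carries a factor $(q;q)_{N_{2r-2}}$ and whose $\alpha$-side is a single positive-definite theta series, so the result is a product, not a Hecke-type sum. The multisum in the statement has only $\prod_{a=1}^{2r-3}(q;q)_{N_a-N_{a+1}}$, with nothing in the innermost variable. This forces a seed whose $\beta$-sequence is constant. The paper uses $\beta_0=0$ and $\beta_n=-1$ for $n\ge1$. Its partner is $\alpha_n=q^{2n^2-n}(1-q^{2n})T_n(q)$, with the asymmetric range $-n\le t\le n-1$ in $T_n$, and establishing it requires the finite pentagonal identity \eqref{eq:finitepentagonal}. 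Your alternative seed $q^{n^2}\sum_{|k|\le n}(\cdots)$ comes with no $\beta_n$. Its count is also inconsistent: each chain step multiplies $\alpha_n$ by $a^nq^{n^2}$, so $2r-2$ steps on $q^{n^2}$ give $q^{(2r-1)n^2}$, not the $2rn^2$ that the lattice $\operatorname{diag}(4r,-3)$ requires.

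\textbf{The linear terms.} Your mechanism here also fails. Chain steps relative to $a=1$ and $a=q$ carry weights $q^{N^2}$ and $q^{N^2+N}$, giving linear coefficients $0$ and $1$. The target has $1$ on $N_1,\ldots,N_{2j-2}$ and $2$ on the rest. The paper proceeds as follows.
\begin{itemize}
\item It applies $2r-2j$ ordinary chain steps relative to $1$, each with weight $q^{N^2}$.
\item It then applies $2j-2$ steps of the composite raise-then-lower transformation $\mathsf B$ of Lemma~\ref{lem:bailey-B}, each with weight $q^{N(N-1)}$.
\item Because $\beta_0=0$ forces $N_{2r-2}\ge1$, the shift $N_a\mapsto N_a+1$ turns these weights into $N^2+2N+1$ and $N^2+N$. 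This produces both linear coefficients and the factor $q^{2(r-j)}$.
\end{itemize}
On the $\alpha$-side, the telescoping identity \eqref{eq:Bfunctional} for $\mathsf B^t$ produces the $j$-dependent factor $q^{-(2j-1)n}-q^{(2j-1)n}$, that is, $S_{r,j}(q)$. This is exactly the Hecke-type sum your Step 2 needs as input. Without a correct seed pair and a transformation realizing these weights, Step 1 cannot be carried out, and the cone comparison has nothing to attach to.
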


Theorems~\ref{thm:nahm-fbar-hat} and~\ref{thm:euler} therefore give
multiple-sum formulas for all $2r-1$ components of
$\widehat{\boldsymbol f}^{(r)}$. The modular transformations of this
holomorphic vector require a nonholomorphic correction. Replacing
the two sign functions by the error-function kernels of Zwegers
\cite{Zwegers2002} gives the following closed system.

\begin{theorem}
\label{thm:modular}
Let $r\geq2$ be an integer, $\tau:=x+iy\in\Hh$, and
$E(t):=2\int_0^t e^{-\pi w^2}\,dw$. For
$1\leq\ell\leq2r-1$, define
\begin{align}
 H_\ell^{(r)}(\tau):={}&
 \sum_{\substack{u\in\Z+(-1)^{\ell+1}\ell/(4r)\\v\in\Z-1/6}}
 \Biggl[
 E\!\left(\sqrt{\frac{24ry}{4r-3}}(u-v)\right)
 \notag\\[-2pt]
 &\hspace{37mm}+
 E\!\left(\sqrt{\frac{24ry}{4r-3}}(u+v)\right)
 \Biggr]q^{2ru^2-3v^2/2}e^{-\pi iv}.
 \label{eq:H-scalar}
\end{align}
The vector
$\boldsymbol H^{(r)}:=(H_1^{(r)},\ldots,H_{2r-1}^{(r)})^T$
is real analytic and has the weight-one transformation laws
\begin{align}
 H_\ell^{(r)}(\tau+1)
 &=e^{2\pi i(\ell^2/(8r)-1/24)}H_\ell^{(r)}(\tau),
 \label{eq:HT-components}\\
 H_\ell^{(r)}(-1/\tau)
 &=-\frac{i\tau}{\sqrt r}
 \sum_{k=1}^{2r-1}(-1)^{\ell+k}
 \sin\!\left(\frac{\pi\ell k}{2r}\right)H_k^{(r)}(\tau).
 \label{eq:HS-components}
\end{align}
\end{theorem}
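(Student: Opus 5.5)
We place $H_\ell^{(r)}$ inside Zwegers' framework of indefinite theta functions for a lattice of signature $(1,1)$.
Write $\mathbf x=(u,v)$ and $Q(\mathbf x)=2ru^2-\tfrac32 v^2$, with bilinear form $B(\mathbf x,\mathbf y)=4ru_1u_2-3v_1v_2$, so that $q^{Q(\mathbf x)}$ is the exponential in \eqref{eq:H-scalar}.
The two kernels correspond to vectors $c_1,c_2$ with $Q(c_i)<0$ and $B(c_i,\mathbf x)\propto u\mp v$.
A natural choice is $c_{1,2}=(3,\pm 4r)/\text{const}$: then $B(c,\mathbf x)=12r(u\mp v)$ and $Q(c)=18r-24r^2$.
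In this normalization $B(c,\mathbf x)/\sqrt{-Q(c)}$ is proportional to $(u\mp v)\sqrt{24r/(4r-3)}$, up to the factor $\sqrt2$ convention of Zwegers' $E(B(c,x)\sqrt{y}/\sqrt{-Q(c)})$. This matches the argument of $E$ in \eqref{eq:H-scalar}, and I will check the constant once.

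Each term in \eqref{eq:H-scalar} is a Zwegers function $\vartheta_{c}(\tau)$ with one vector $c$ but two kernels. I will treat the sum as the difference theta $\vartheta^{c_1,c_2}$ associated with $E(\cdot c_1)-E(\cdot c_2')$, where $c_2'=-c_2$ and $E$ is odd.
The characteristic is $a=(\epsilon\ell/(4r),-1/6)$ with $\epsilon=(-1)^{\ell+1}$, and the elliptic variable is $b=(0,1/6)$ up to rescaling. The factor $e^{-\pi i v}$ is then $e^{2\pi iB(\mathbf x,b)}$ with $B(\mathbf x,b)=-3v\,b_2$, so $b_2=1/6$.

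Since $c_1,c_2'$ lie in the same component of the negative cone (this needs checking: $B(c_1,c_2')<0$), Zwegers' Corollary 2.9 and Theorem 2.9 give convergence. They also give real analyticity, and the transformations
\[
\vartheta_{a,b}(\tau+1)=e^{-\pi iQ(a)-\pi iB(a,A^*)}\vartheta_{a,a+b+A^*/2}(\tau),
\qquad
\vartheta_{a,b}(-1/\tau)=\frac{i}{\sqrt{-\det A}}\,(-i\tau)\,e^{2\pi iB(a,b)}\sum_{p\in A^{-1}\Z^2/\Z^2}\vartheta_{b+p,-a}(\tau).
\]
Here $A=\operatorname{diag}(4r,-3)$ and $\det A=-12r$.

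For $T$, the shift $\tau\mapsto\tau+1$ multiplies each term by $e^{2\pi i(2ru^2-3v^2/2)}$. With $u\in\Z\pm\ell/(4r)$ we get $2ru^2\equiv \ell^2/(8r)\pm \ell u'$. Combining with the $v$ part, $e^{-3\pi i v^2}$ for $v\in\Z-1/6$ equals $e^{-\pi i/12}\,(-1)^{\text{shift}}$, and this interacts with $e^{-\pi iv}$. A direct computation should give $e^{2\pi i(\ell^2/(8r)-1/24)}$. I would verify this by hand rather than through Zwegers' general formula.

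For $S$, I apply Zwegers' $S$-law (equivalently Poisson summation in both variables, using that the completed kernel behaves correctly under Fourier transform). This produces a sum over the discriminant group $\Z/4r\times\Z/3$ of theta functions with characteristic $(k/(4r),\,b\text{-related})$. The main obstacle is the bookkeeping that collapses this to the $2r-1$ components.

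For the $v$-variable, the $\Z/3$ sum together with the twist $e^{-\pi iv}$ should reduce to the single class $v\in\Z-1/6$ with phase $\zeta_{12}^{\cdot}$; this is the $\eta$-type factor with $S$-multiplier. I expect the $v$ part to behave like the eta-character (weight $1/2$) and the $u$ part like a unary theta of weight $1/2$ for $\Z/4r$.

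For the $u$-variable, the classes $k$ and $-k$ are identified via $(u,v)\mapsto(-u,-v)$. Under this map each kernel is sent to minus the other, and $e^{-\pi iv}$ becomes $e^{\pi iv}$. This yields the antisymmetry that makes the $k=0$ and $k=2r$ classes vanish, and combines $e^{2\pi i\ell k/(4r)}-e^{-2\pi i\ell k/(4r)}$ into the sine. The sign $(-1)^{\ell+k}$ arises from the choice $\epsilon=(-1)^{\ell+1}$ together with the $u$-shift by half periods.

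The overall constant $-i\tau/\sqrt r$ comes from $\sqrt{12r}$ combined with the $\sqrt3$ and $2i$ produced by the $v$ collapse and the sine combination. Fixing these constants and signs is where I expect most of the effort.
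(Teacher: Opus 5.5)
Your framework is the one the paper uses. It takes $H_\ell^{(r)}=\vartheta^{c_+,c_-}_{a_\ell,b}$ with $c_+=(3,4r)^T$ and $c_-=(-3,4r)^T$ (your $c_1$ and $c_2'$, up to scaling), $a_\ell=(s_\ell/(4r),-1/6)^T$ where $s_\ell=(-1)^{\ell+1}\ell$, and $b=(0,1/6)^T$. The $T$-law is checked termwise, and the $S$-law comes from Zwegers' formula summed over $A^{-1}\Z^2/\Z^2\cong\Z/4r\times\Z/3$. No $\sqrt2$ adjustment is needed, since $B(c_\pm,\nu)/\sqrt{-Q(c_\pm)}=\pm\sqrt{24r/(4r-3)}\,(\nu_1\mp\nu_2)$ exactly.

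The gap is in the step you defer as bookkeeping, which is the actual content of the $S$-law. You justify collapsing the $\Z/3$ factor by analogy with the $\eta$-multiplier. But the summand does not factor into a $u$-series times a $v$-series, because the kernels $E(\kappa(u\mp v))$, with $\kappa=\sqrt{24ry/(4r-3)}$, couple the two variables. The missing ingredient is that the kernel $E(\kappa(u-v))+E(\kappa(u+v))$ is even in $v$, while $e^{2\pi iv}$ is constant on each coset $\Z+t$. Equivalently, $\operatorname{diag}(1,-1)$ maps $c_+\mapsto-c_-$ and $c_-\mapsto-c_+$. Zwegers' elliptic shift law strips the first coordinate of $-a$ off as the phase $e^{-2\pi is_\ell k/(4r)}$. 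The three remaining classes are then $v\in\Z+\tfrac16$, $v\in\Z-\tfrac16$, and $v\in\Z-\tfrac12$. Write $K_k$ for the series with characteristic $(k/(4r),-1/6)^T$. The substitution $v\mapsto-v$ shows that the first class equals $\zeta_6^{-1}K_k$, and that the third class equals its own negative, so it vanishes. Only then does $e^{2\pi iB(a,b)}(1+\zeta_6^{-1})=\zeta_{12}\cdot\sqrt3\,\zeta_{12}^{-1}=\sqrt3$ give the constant $\tau/(2\sqrt r)$ in front of $\sum_{k\bmod 4r}e^{-2\pi is_\ell k/(4r)}K_k$.

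The same evenness is also missing from your pairing step, and a few other mechanics are misstated.

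Under $(u,v)\mapsto(-u,-v)$, each kernel goes to minus itself, not minus the other. You obtain a series with characteristic $(-k/(4r),+1/6)^T$ and twist $e^{+\pi iv}$, which is not $K_{-k}$. To get $K_{-k}=-K_k$ you must also use evenness in $v$. Alternatively, use $(u,v)\mapsto(-u,v)$ directly, which swaps the two kernels with a sign and fixes $e^{-\pi iv}$. Together with $K_{k+4r}=K_k$, this also gives $K_0=K_{2r}=0$.

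The sign $(-1)^{\ell+k}$ does not come from half-period shifts. Since $H_\ell=K_{s_\ell}$, oddness gives $K_k=(-1)^{k+1}H_k$, and also $\sin(\pi s_\ell k/(2r))=(-1)^{\ell+1}\sin(\pi\ell k/(2r))$; the two signs combine to $(-1)^{\ell+k}$.

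In the $T$-step there is no residual sign to cancel. For $v=n-\tfrac16$ one has $e^{-3\pi iv^2}=e^{-3\pi in^2+\pi in}e^{-\pi i/12}=e^{-\pi i/12}$. The factor $e^{-\pi iv}$ does not depend on $\tau$, so it plays no role.
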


The transformation law holds for every $r\geq2$, without a parity
restriction on the rank. The proof uses the decomposition
$\widehat{\boldsymbol f}^{(r)}=\boldsymbol H^{(r)}+
\boldsymbol G^{(r)}$ and expresses each correction term as a finite
sum of products of a unary theta series and a one-dimensional
error-function series. This gives both the
antiholomorphic derivative of the completion and the inhomogeneous
transformation law of the holomorphic vector. The boundary terms
in the first two theorems remain separate from this nonholomorphic
correction.

We now return to the component-representation problem. The product
vectors below are those of Du, Ji, Shen, and Xu
\cite{DuJiShenXu2026}. The signed cancellation in
Theorem~\ref{thm:nahm-fbar-hat}, together with further Bailey-pair
evaluations for the transpose families, supplies the missing
components. The prescribed matrices are specified by a
quadratic-form identity valid at every rank $r\geq2$.

Let $r\geq2$ be an integer and set
\[
 \mathbf d:=(2,\ldots,2,1)^T,\quad
 \mathbf d^\vee:=(1,\ldots,1,2)^T,\quad
 D:=\operatorname{diag}(\mathbf d),\quad
 D^\vee:=\operatorname{diag}(\mathbf d^\vee).
\]
Let $B$ be the unique matrix such that $BD$ is symmetric and,
for every $\mathbf x=(x_1,\ldots,x_r)^T\in\R^r$,
\begin{equation}
 \frac12\mathbf x^TBD\mathbf x
 =\left(x_1+\frac{x_r}{2}\right)^2
  +\frac12\sum_{a=1}^{r-1}
     \left(2\sum_{b=a+1}^{r-1}x_b+x_r\right)^2.
 \label{eq:BD-quadratic}
\end{equation}
Set $A:=B-\frac12E_{rr}$, $A^\vee:=A^T$, and $B^\vee:=B^T$,
where $E_{rr}$ has a single nonzero entry, equal to $1$, in
position $(r,r)$.

Define, for $1\leq j\leq2r-1$,
\begin{align}
 g_j(\tau)
 &:=q^{\frac{(4r-4j+1)^2}{32r-8}}
 \left(q^{\frac{2j-r-1}{2}}
       \frac{J_{8r-4j,\,16r-4}}{J_{1,4}}
       +\frac{\widetilde J_{2j-1,\,4r-1}}{J_{2,4}}\right),
 \label{eq:g-product}\\
 g_j^\vee(2\tau)
 &:=\frac12q^{\frac{2j^2-2j-2r+1}{16r-4}}
 \left(\frac{J_{2r-j,\,4r-1}}{J_{1,4}}
       +(-1)^{j(j-1)/2}
        \frac{\widetilde J_{2r-j,\,4r-1}}{\widetilde J_{1,4}}\right),
 \label{eq:gdual-product}
\end{align}
and, for $1\leq j\leq2r-2$,
\begin{align}
 h_j(\tau)
 &:=q^{\frac{(4j-4r+1)^2}{32r-24}}
 \left(q^{\frac{4j-2r-1}{4}}
       \frac{J_{4(2r-j-1),\,4(4r-3)}}{J_{1,4}}
       +\frac{\widetilde J_{2(2r-j-1),\,4r-3}}{J_{2,4}}\right),
 \label{eq:h-product}\\
 h_j^\vee(2\tau)
 &:=\frac12q^{\frac{j^2-j-r+1}{8r-6}}
 \left(\frac{J_{2r+j-2,\,4r-3}}{J_{1,4}}
       +\frac{\widetilde J_{2r+j-2,\,4r-3}}{\widetilde J_{1,4}}\right).
 \label{eq:hdual-product}
\end{align}

\begin{theorem}
\label{thm:four-families}
For every integer $r\geq2$, the functions
$g_j(\tau)$, $g_j^\vee(2\tau)$, $h_j(\tau)$, and
$h_j^\vee(2\tau)$, in their full index ranges, are finite
$\C$-linear combinations of rank-$r$ generalized Nahm sums with
respective matrix, index vector, and nome
\[
 (A,\mathbf d,q),\qquad
 (A^\vee,\mathbf d^\vee,q^2),\qquad
 (B,\mathbf d,q),\qquad
 (B^\vee,\mathbf d^\vee,q^2).
\]
The sums in the $A$- and $B$-families are unsigned. For
$A^\vee$ and $B^\vee$, sign factors
$\prod_{a=1}^r\varepsilon_a^{n_a}$, with
$\varepsilon_a\in\{1,-1\}$, are allowed.
\end{theorem}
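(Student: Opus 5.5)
The proof goes family by family. For each of the four product vectors we show that the components already covered by Du, Ji, Shen, and Xu \cite{DuJiShenXu2026} admit the required multiple-sum representations, and we supply the components listed as missing in their Problem~1.13. First we write the quadratic form \eqref{eq:BD-quadratic} in the variables $N_a=n_{a+1}+\cdots+n_r$. The exponent $\frac12\mathbf n^TBD\mathbf n$ then becomes $n_1^2+n_1n_r+\frac14n_r^2+\frac12\sum_a(2N_a-n_r)^2$, and similar expressions hold for $A$, $A^\vee$, $B^\vee$. In particular, after $q\mapsto q^2$ in the dual families, the summands of the sums in Theorem~\ref{thm:nahm-fbar-hat} are exactly generalized Nahm summands for $(B^\vee,\mathbf d^\vee,q^2)$ with sign $\varepsilon_r=-1$ and suitable linear terms $\mathbf b$. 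We record this dictionary between linear exponents $\sum_{a<j}N_a+2\sum_{a\ge j}N_a$ (and their shifts) and vectors $\mathbf b$ once, uniformly in $r$.

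For $h_j^\vee(2\tau)$, \eqref{eq:sub-eta-Bdual} gives, after dividing by the prefactor, the quotient $(J_{2r-2j-1,4r-3}-J_{2r-2j-2,4r-3})/J_{1,4}$ as a difference of two signed $B^\vee$ Nahm sums. The factor $1-q^{2n_1+2n_r+1}$ splits into two sums differing only in $\mathbf b$. Next we use the symmetry $J_{a,m}=J_{m-a,m}$ and the relation $\widetilde J_{a,m}(q)=\pm J_{a,m}(-q)$, up to the half-period shift $q\mapsto -q$. On the Nahm-sum side, that shift multiplies each summand by $(-1)^{\text{exponent}}$, which is a product of signs $\varepsilon_a^{n_a}$ times a constant phase because the quadratic part has the parity of a linear form. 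So each $h_j^\vee$ is $\frac12(F(q)+F(-q))$ for a suitable signed sum $F$. Combining the index $2r-2j-1$ versus $2r-2j-2$ telescoping with the known evaluations from \cite{DuJiShenXu2026} for a neighbouring index recovers each single quotient $J_{2r+j-2,4r-3}/J_{1,4}$ inductively in $j$. This yields all $j$ in $1\le j\le 2r-2$.

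For the $g^\vee$-family (modulus $4r-1$) we repeat the same scheme with a second Bailey-pair evaluation. We take the Bailey pair relative to $q^2$ that produces the rank-$r$ $A^\vee$ sums (iterating the Bailey lemma $r-1$ times on the $N_a$ chain, then a final step in $n_1,n_r$ with base $q^4$). Inserting a conjugate pair whose limit is a Jacobi triple product of modulus $4r-1$, we obtain the analogue of \eqref{eq:sub-eta-Bdual}. The sign $(-1)^{j(j-1)/2}$ in \eqref{eq:gdual-product} should emerge as the constant phase from the half-period shift.

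For the unsigned $A$- and $B$-families with nome $q$, the two summands in \eqref{eq:g-product} and \eqref{eq:h-product} are exactly the $\pm$-symmetrized parts in the dual structure. We apply the same Bailey chains with $q$ in place of $q^2$ and with index vector $\mathbf d$. The missing components then arise from the shifted linear terms, and linear combinations of adjacent components telescope as above. We check separately the boundary cases $3j=r$, $3j>r$ from Theorem~\ref{thm:nahm-fbar-hat}: there the extra $J_{2r-2j-1,4r-3}$ terms are themselves product components and so are absorbed.

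The main obstacle is bookkeeping the half-period translates in the dual families. We must verify that $q\mapsto-q$ acts on every required $B^\vee$ or $A^\vee$ summand by signs $\varepsilon_a^{n_a}$ and a global constant only, uniformly in $r$. This reduces to showing that $\frac12\mathbf n^TA^\vee D^\vee\mathbf n+\mathbf n^T\mathbf b$ is congruent mod $2$ to a linear form in $\mathbf n$. We check this from the $N_a$-form of \eqref{eq:BD-quadratic}, using $m^2\equiv m \pmod 2$ for integers $m$.
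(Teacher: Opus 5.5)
Your overall plan has the paper's structure. New difference-of-Jacobi-product evaluations are combined, via $J_{a,m}=J_{m-a,m}$, with the cited evaluations of \cite{DuJiShenXu2026}. In the dual families, the $q\mapsto-q$ translates are absorbed into summand signs because $\mathbf n^T\mathsf AD^\vee\mathbf n$ is congruent modulo $2$ to a linear form. Your $B^\vee$ step is essentially the paper's argument. After splitting $1-q^{2n_1+2n_r+1}$, \eqref{eq:sub-eta-Bdual} is $\mathcal N^-_{r,j}(q^2)-q\mathcal N^+_{r,j}(q^2)$. Each odd-index quotient $J_{2r-2k-1,4r-3}/J_{1,4}$ is then this difference times a monomial, plus the known even-index quotient $J_{2r-2k-2,4r-3}/J_{1,4}$.

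The gap is that the other three new evaluations, \eqref{eq:A-primary}, \eqref{eq:A-dual} and \eqref{eq:B-primary}, are never established, and the mechanisms you sketch would not produce them. Without them you cannot reach $g_i$ for $i\leq r-1$, $g_i^\vee$ for odd $i\geq3$, or $h_i$ for $i\leq\max\{1,r-2\}$.

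\emph{The $A^\vee$ family.} The paper first sums over $n_1$ by Euler's identity. This leaves $(-q^{3/2};q)_\infty$ times an $(r-1)$-fold sum whose last denominator is $(-q^{3/2};q)_N(q^2;q^2)_N$. That sum is evaluated from \eqref{eq:pair-G2} by raising ($b\to0$), then $r-j-1$ chain steps relative to $q^2$, then lowering, then $j$ chain steps relative to $q$. Your ``pair relative to $q^2$ with a final step in $(n_1,n_r)$ at base $q^4$'' and your unspecified ``conjugate pair'' name neither a seed pair nor a transformation sequence.

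\emph{The $A$ and $B$ families.} ``The same Bailey chains with $q$ in place of $q^2$'' cannot work directly, for two reasons. At nome $q$ the form is naturally quadratic in $n_1$, $n_a$ and $n_r/2$, through $L_a=n_a+\cdots+n_{r-1}+n_r/2$. Also, $(q;q)_{n_r}$ has a different base from the other denominators. The two summands of $g_j$ and $h_j$, with denominators $J_{1,4}$ and $J_{2,4}$, are not a $\pm$-symmetrization. They are the parity classes $n_r=2m+\varepsilon$ after Euler summation in $n_1$, using $(-q^{2m+\varepsilon+1};q^2)_\infty/(q;q)_{2m+\varepsilon}$. Each class needs its own nested-sum identity:
\begin{itemize}
\item for $A$, a \eqref{eq:pair-C1}-based identity at $q^2$ for even $n_r$, and the \eqref{eq:pair-G2}-based identity transported by $q\mapsto q^2$, $q\mapsto-q$ for odd $n_r$;
\item for $B$, a \eqref{eq:pair-C5}-based identity obtained by raising with $b\to\infty$ for even $n_r$, and for odd $n_r$ the $(r-1)$-fold form of the cancellation \eqref{eq:sub-eta-Bdual}, again transported by $q\mapsto q^2$, $q\mapsto-q$.
\end{itemize}
So Theorem~\ref{thm:nahm-fbar-hat} also enters the $B$-family, and your plan does not use it there.

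Three smaller points.
\begin{itemize}
\item The sign $(-1)^{\binom j2}$ in \eqref{eq:gdual-product} does not need to emerge from anything. Once $J_{2r-j,4r-1}/J_{1,4}$ and its $q\mapsto-q$ translate are each combinations of signed $A^\vee$ sums, any fixed coefficient is allowed.
\item The cases $3j=r$ and $3j>r$ of Theorem~\ref{thm:nahm-fbar-hat} concern only the comparison of $\widehat f_{2j}^{(r)}$ with $\mathcal N^+_{r,j}$. The identity \eqref{eq:sub-eta-Bdual} has no such cases, so nothing needs to be absorbed.
\item The parity condition should be imposed on $\mathbf n^T\mathsf AD^\vee\mathbf n+2\mathbf n^T\mathbf b$, which is twice the exponent at nome $q^2$, not on half of the quadratic form. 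It relies on the integrality of all dual linear terms, including those in the cited evaluations.
\end{itemize}
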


In the transpose families, half-period translates are absorbed into
summand signs and constant phase factors, without changing the matrix,
index vector, or rank. Theorem~\ref{thm:four-families} supplies the
component representations requested in all four parts of
\cite[Problem~1.13]{DuJiShenXu2026} under these conventions. It does
not assert that each dual component is a single untwisted generalized
Nahm sum.
For the $B^\vee$-family, the signed product evaluation used in the
construction is exactly the cancellation identity of the first
theorem.

The proofs use two related applications of Bailey pairs. For the
theta vector, parameter-raising and parameter-lowering
transformations produce Hecke-type double sums; comparing their
summation cones with the shifted lattice gives the exact boundary
terms. A Bailey pair with constant nonzero $\beta$-terms, together
with an iterated transformation relative to $1$, gives the odd
components. Zwegers' transformation formula then reduces the
$S$-transformation to a finite sine transform. For the product
vectors, Euler summation and separation of the parity of the last
variable reduce the multiple sums to bilateral theta series.
Their product evaluations recover the components through explicit
linear combinations, while quadratic congruences control the dual
half-period translates.

Together, the four theorems give a uniform construction in the rank
and the component index. They distinguish the multiple-sum
normalization, the holomorphic cone-boundary correction, and the
nonholomorphic modular completion, and they retain the prescribed
quadratic data in the product-component representations.
Section~\ref{sec:preliminaries} develops the Bailey transformations
and Hecke-type sums. Section~\ref{sec:theta-proofs} proves the
theta-vector results, determines the correction terms, and records
the low-rank specializations. Section~\ref{sec:proof-four-families}
proves the product evaluations and the component-recovery formulas.

\section{Bailey transformations and Hecke-type sums}
\label{sec:preliminaries}

We recall the Bailey transformations and establish the Hecke-type
identities used in the component formulas.

\subsection{Bailey-pair transformations}

Bailey pairs have been used to obtain indefinite quadratic sums by
Lovejoy \cite{Lovejoy2014,Lovejoy2022} and families of mock theta
functions by Lovejoy and Osburn \cite{LovejoyOsburn2013}. We recall
the transformations needed below.

Throughout this subsection, \(0<|q|<1\),
\(a\in\C\setminus\{0\}\), and parameters are chosen so that the
displayed denominators are nonzero. Formulas at removable singularities
are interpreted by continuity. Two sequences
\((\alpha_n(a;q),\beta_n(a;q))_{n\geq0}\)
form a Bailey pair relative to \((a,q)\) if
\begin{equation}
 \beta_n(a;q)=
 \sum_{k=0}^n\frac{\alpha_k(a;q)}
 {(q;q)_{n-k}(a q;q)_{n+k}}.
 \label{eq:bailey-general}
\end{equation}
When the base is understood, we say that the pair is relative to
\(a\).  The inverse relation
\cite[Lemma~3]{Andrews1979} gives \(\alpha_0=\beta_0\) and, for
\(n\geq1\),
\begin{equation}
 \alpha_n(a;q)
 =(1-a q^{2n})
 \sum_{k=0}^n
 \frac{(a q;q)_{n+k-1}(-1)^{n-k}
 q^{\binom{n-k}{2}}}{(q;q)_{n-k}}\,
 \beta_k(a;q).
 \label{eq:bailey-inverse}
\end{equation}
If \(\sum_{n\geq0}|\alpha_n(a;q)|<\infty\), dominated convergence
in \eqref{eq:bailey-general} gives
\begin{equation}
 \lim_{n\to\infty}\beta_n(a;q)
 =\frac1{(q,a q;q)_\infty}
 \sum_{k=0}^\infty\alpha_k(a;q).
 \label{eq:bailey-limit}
\end{equation}

\begin{lemma}[Bailey's lemma]
\label{lem:bailey-lemma}
If \((\alpha_n,\beta_n)\) is a Bailey pair relative to
\((a,q)\), then so is \((\alpha'_n,\beta'_n)\), where
\(\rho_1,\rho_2\in\C\setminus\{0\}\) are auxiliary parameters and
\begin{align*}
 \alpha'_n(a;q)
 &=\frac{(\rho_1,\rho_2;q)_n
 (a q/(\rho_1\rho_2))^n}
 {(a q/\rho_1,a q/\rho_2;q)_n}
 \alpha_n(a;q),\\
 \beta'_n(a;q)
 &=\sum_{k=0}^n
 \frac{(\rho_1,\rho_2;q)_k
 (a q/(\rho_1\rho_2);q)_{n-k}
 (a q/(\rho_1\rho_2))^k}
 {(a q/\rho_1,a q/\rho_2;q)_n(q;q)_{n-k}}
 \beta_k(a;q).
\end{align*}
In particular, letting \(\rho_1,\rho_2\to\infty\) gives
\cite[Eq.~(S1)]{BressoudIsmailStanton2000}
\begin{align}
 \alpha'_n(a;q)
 &=a^nq^{n^2}\alpha_n(a;q),
 &\beta'_n(a;q)
 &=\sum_{k=0}^n\frac{a^kq^{k^2}}{(q;q)_{n-k}}
 \beta_k(a;q).
 \label{eq:bailey-chain}
\end{align}
\end{lemma}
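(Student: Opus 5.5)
The statement to prove is Bailey's lemma in the stated form together with its limiting case \eqref{eq:bailey-chain}. I will verify the defining relation \eqref{eq:bailey-general} directly for the new pair. Substituting the definition of \(\beta_k\) into \(\beta'_n\) and interchanging the two finite sums gives
\[
\beta'_n=\frac{1}{(aq/\rho_1,aq/\rho_2;q)_n}\sum_{m=0}^n\alpha_m\sum_{k=m}^n
\frac{(\rho_1,\rho_2;q)_k(aq/\rho_1\rho_2;q)_{n-k}(aq/\rho_1\rho_2)^k}{(q;q)_{n-k}(q;q)_{k-m}(aq;q)_{k+m}}.
\]
It then suffices to show that the inner sum equals
\[
\frac{(aq/\rho_1,aq/\rho_2;q)_n}{(q;q)_{n-m}(aq;q)_{n+m}}\cdot\frac{(\rho_1,\rho_2;q)_m(aq/\rho_1\rho_2)^m}{(aq/\rho_1,aq/\rho_2;q)_m}.
\]

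Next I shift \(k=m+s\) with \(0\le s\le n-m\) and rewrite the Pochhammer symbols using \((x;q)_{m+s}=(x;q)_m(xq^m;q)_s\) and the reversal \((x;q)_{N-s}=(x;q)_N(-q/x)^s q^{\binom s2-Ns}/(q^{1-N}/x;q)_s\), where \(N=n-m\). The inner sum becomes a prefactor times a terminating balanced \({}_3\phi_2\):
\[
{}_3\phi_2\!\left(\begin{matrix}q^{-N},\ \rho_1q^m,\ \rho_2q^m\\ aq^{1+2m},\ \rho_1\rho_2q^{-N-m}/a\end{matrix};q,q\right).
\]
I will evaluate this with the \(q\)-Pfaff--Saalschütz summation,
\[
{}_3\phi_2(q^{-N},A,B;C,ABq^{1-N}/C;q,q)=\frac{(C/A,C/B;q)_N}{(C,C/AB;q)_N},
\]
taking \(A=\rho_1q^m\), \(B=\rho_2q^m\) and \(C=aq^{1+2m}\). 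The balancing condition holds because \(ABq^{1-N}/C=\rho_1\rho_2q^{-N-m}/a\). The result is
\[
\frac{(aq^{1+m}/\rho_1,\ aq^{1+m}/\rho_2;q)_N}{(aq^{1+2m},\ aq/(\rho_1\rho_2);q)_N}.
\]
Recombining with \((aq/\rho_i;q)_m\) and \((aq;q)_{2m}\) then produces exactly the target expression. The main obstacle is the bookkeeping of the powers of \(q\), the signs and the factor \((aq/\rho_1\rho_2)^k\) under the reversal. I will check this carefully, for example by confirming the cases \(N=0\) and \(N=1\) by hand.

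For \eqref{eq:bailey-chain} I let \(\rho_1,\rho_2\to\infty\) termwise in the finite sums. Since \((\rho;q)_k\sim(-\rho)^kq^{\binom k2}\), we get
\[
(\rho_1,\rho_2;q)_k(aq/\rho_1\rho_2)^k\to a^kq^{k^2},
\]
while \((aq/\rho_i;q)_n\to1\) and \((aq/\rho_1\rho_2;q)_{n-k}\to1\). The same asymptotics apply to \(\alpha'_n\). Because all sums are finite, the limit of the Bailey relation is again a Bailey relation, which gives \eqref{eq:bailey-chain}.
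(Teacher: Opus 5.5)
The paper gives no proof of this lemma. It quotes Bailey's lemma as classical and cites Bressoud--Ismail--Stanton for the limiting case. Your argument therefore supplies a proof the paper omits. Your route is the standard one: substitute the Bailey relation into \(\beta'_n\), interchange the finite sums, and evaluate the resulting coefficient identity by the terminating \(q\)-Pfaff--Saalsch\"utz sum. It has the same template as the paper's proof of Lemma~\ref{lem:bailey-B}, where the coefficient identity is instead handled by a terminating transformation and a telescoping step.

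The reduction works, with one arithmetic correction. Set \(c:=aq/(\rho_1\rho_2)\). After the reversal, the new lower parameter is \(q^{1-N}/c=\rho_1\rho_2q^{-N}/a\), not \(\rho_1\rho_2q^{-N-m}/a\). This is also the balanced value: \(ABq^{1-N}/C=\rho_1\rho_2q^{2m}\cdot q^{1-N}/(aq^{1+2m})=\rho_1\rho_2q^{-N}/a\). With the parameter you wrote, the series would not be balanced and Saalsch\"utz would not apply.

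With the correct parameter the argument closes:
\begin{itemize}
\item The powers combine as \(c^{m+s}(q/c)^s=c^mq^s\), so the \({}_3\phi_2\) has argument \(q\).
\item The prefactor is \((\rho_1,\rho_2;q)_m\,c^m(c;q)_N/\bigl((q;q)_N(aq;q)_{2m}\bigr)\).
\item The Saalsch\"utz value you state cancels \((c;q)_N\).
\item The identities \((aq;q)_{2m}(aq^{1+2m};q)_N=(aq;q)_{n+m}\) and \((aq/\rho_i;q)_m(aq^{1+m}/\rho_i;q)_N=(aq/\rho_i;q)_n\) then give exactly your target.
\end{itemize}
Your planned check at \(N=0\) would not catch the slip, since the \({}_3\phi_2\) is then \(1\). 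The case \(N=1\) does catch it.

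There is also a degenerate-parameter point. The reversal introduces the denominator \((\rho_1\rho_2q^{-N}/a;q)_s\), which can vanish even when the displayed denominators do not. Both sides of the coefficient identity are rational in \(\rho_1,\rho_2\), so it suffices to prove it for generic parameters and extend by continuity. This is consistent with the paper's stated convention for removable singularities.

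Your passage to the limit \(\rho_1,\rho_2\to\infty\) is correct. It uses \((\rho;q)_k\sim(-\rho)^kq^{\binom k2}\) and the finiteness of all the sums.
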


The next transformation is the composition of
\cite[Eqs.~(2.4)--(2.5)]{Lovejoy2004}, with \(b\to\infty\), and
\cite[Eq.~(L1)]{BressoudIsmailStanton2000}. The first changes the
relative parameter from \(a\) to \(aq\), and the second returns
it to \(a\). Its iterates are needed for the odd components, so we
include a direct proof.

\begin{lemma}
\label{lem:bailey-B}
If \((\alpha_n,\beta_n)\) is a Bailey pair relative to
\((a,q)\), then so is \((\alpha'_n,\beta'_n)\), where
\begin{align}
 \alpha'_n(a;q)
 &=a^nq^{n(n-1)}
 \left(\alpha_n(a;q)
 +(1-a q^{2n})\sum_{k=0}^{n-1}
 \alpha_k(a;q)\right),\notag\\*
 \beta'_n(a;q)
 &=\sum_{k=0}^n\frac{a^kq^{k(k-1)}}{(q;q)_{n-k}}
 \beta_k(a;q).
 \label{eq:bailey-B-general}
\end{align}
At \(n=0\), the inner sum is empty, so
\(\alpha'_0=\alpha_0\).
\end{lemma}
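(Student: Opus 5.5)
By linearity, it suffices to check the lemma on the elementary pairs. Both the Bailey relation \eqref{eq:bailey-general} and the maps \(\alpha\mapsto\alpha'\), \(\beta\mapsto\beta'\) in \eqref{eq:bailey-B-general} are linear. Moreover, every Bailey pair relative to \((a,q)\) is a (finite, for each \(n\)) linear combination of the pairs
\[
 \alpha^{(i)}_k=\delta_{k,i},\qquad
 \beta^{(i)}_k=\frac{[k\geq i]}{(q;q)_{k-i}(aq;q)_{k+i}},\qquad i\geq0 .
\]
For fixed \(n\), only \(\alpha_0,\dots,\alpha_n\) enter \(\beta_n\) and \(\alpha'_n\), so no convergence issues arise.

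For such a pair, \(\alpha'^{(i)}_m\) equals \(a^iq^{i(i-1)}\) at \(m=i\), equals \(a^mq^{m(m-1)}(1-aq^{2m})\) for \(m>i\), and vanishes for \(m<i\). The claim therefore reduces to the following finite identity, for \(0\leq i\leq n\):
\begin{equation*}
 \sum_{k=i}^{n}\frac{a^kq^{k(k-1)}}{(q;q)_{n-k}(q;q)_{k-i}(aq;q)_{k+i}}
 =\frac{a^iq^{i(i-1)}}{(q;q)_{n-i}(aq;q)_{n+i}}
 +\sum_{m=i+1}^{n}\frac{a^mq^{m(m-1)}(1-aq^{2m})}{(q;q)_{n-m}(aq;q)_{n+m}} .
\end{equation*}

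First I evaluate the right-hand side by telescoping. Set
\[
 T_m:=\frac{a^mq^{m(m-1)}}{(q;q)_{n-m}(aq;q)_{n+m}}(1-q^{n-m})\cdot(\text{suitable factor}).
\]
Concretely, I try to write
\[
 \frac{a^mq^{m(m-1)}(1-aq^{2m})}{(q;q)_{n-m}(aq;q)_{n+m}}=U_m-U_{m+1},\qquad
 U_m:=\frac{a^{m}q^{m(m-1)}}{(q;q)_{n-m}(aq;q)_{n+m-1}}.
\]
This is checked by putting both terms over \((q;q)_{n-m}(aq;q)_{n+m}\), using \(1-q^{n-m}\) and \(1-aq^{n+m}\), and observing that \((1-aq^{n+m})-aq^{2m}(1-q^{n-m})=1-aq^{2m}\). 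If the normalization of \(U_m\) needs adjusting, I adjust it so that the boundary term \(U_{n+1}=0\). The right-hand side then collapses to a single closed form in \(a,q,i,n\).

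Next I evaluate the left-hand side. Substitute \(k=i+s\) and use \((aq;q)_{2i+s}=(aq;q)_{2i}(aq^{2i+1};q)_s\) and \(1/(q;q)_{n-i-s}=(-1)^sq^{\binom s2-(n-i)s}(q^{-(n-i)};q)_s/(q;q)_{n-i}\). This turns the sum into a terminating basic hypergeometric series of the shape
\[
 {}_1\phi_1\!\left(q^{-(n-i)};aq^{2i+1};q,\ast\right).
\]
It can be summed in closed form by the \(q\)-Chu--Vandermonde sum (a confluent limit of \({}_2\phi_1(q^{-N},b;c;q,q)\) as \(b\to\infty\)). I then compare the two closed forms.

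As a fallback, instead of using Vandermonde, I prove the identity by induction on \(n-i\). Both sides satisfy the same first-order recurrence obtained from \(1/(q;q)_{N}=1/(q;q)_{N-1}\cdot 1/(1-q^N)\) via the standard \(q\)-Pascal manipulation, and the identity holds at \(n=i\), where both sides equal \(a^iq^{i(i-1)}/(aq;q)_{2i}\).

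The main obstacle is the bookkeeping in the left-hand evaluation: getting the exact power of \(q\) and the sign in the confluent \(q\)-Vandermonde step so that the result matches the telescoped right-hand side, including the special term at \(m=i\). Alternatively, the whole lemma follows by composing the two cited transformations, the \(b\to\infty\) case of \cite[Eqs.~(2.4)--(2.5)]{Lovejoy2004} (\(a\to aq\)) and \cite[Eq.~(L1)]{BressoudIsmailStanton2000} (\(aq\to a\)). I would use that composition only as a check on the final formula, since the paper asks for a direct proof.
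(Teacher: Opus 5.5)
Your reduction to the coefficient identity and your telescoping of its right-hand side are correct and agree with the paper. Your $U_m$ is the paper's $V_{m-1}=(1-aq^{n+m})L_m$, where $L_m=a^mq^{m(m-1)}/\bigl((q;q)_{n-m}(aq;q)_{n+m}\bigr)$, so the right-hand side equals $L_i+U_{i+1}=(1-aq^{n+i}+aq^{2i})L_i$. The gap is in the left-hand evaluation, which is the only nontrivial step. Set $N:=n-i$ and use the correct conversion $1/(q;q)_{N-s}=(-1)^sq^{Ns-\binom s2}(q^{-N};q)_s/(q;q)_N$ (your exponent has the opposite sign). The left-hand side then becomes
\[
 \frac{a^iq^{i(i-1)}}{(q;q)_N(aq;q)_{2i}}\,
 {}_1\phi_1\bigl(q^{-N};aq^{2i+1};q,aq^{n+i}\bigr),
 \qquad
 {}_1\phi_1(A;C;q,z):=\sum_{s\geq0}\frac{(A;q)_s}{(q;q)_s(C;q)_s}(-1)^sq^{\binom s2}z^s .
\]
The confluent $q$-Chu--Vandermonde sum, i.e.\ the $b\to\infty$ limit of ${}_2\phi_1(q^{-N},b;c;q,cq^N/b)=(c/b;q)_N/(c;q)_N$, is ${}_1\phi_1(q^{-N};c;q,cq^N)=1/(c;q)_N$. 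With $c=aq^{2i+1}$ it needs the argument $aq^{n+i+1}$, whereas yours is $aq^{n+i}$. This is not a bookkeeping issue. The true value of your series is $(1-aq^{n+i}+aq^{2i})/(aq^{2i+1};q)_N$, and forcing the Vandermonde formula would return $L_i$ alone, so your final comparison would fail by exactly $U_{i+1}$.

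Your fallback does not repair this. It asserts a common first-order recurrence without exhibiting one. Passing from $n$ to $n+1$ multiplies the $k$th summand on the left by the $k$-dependent factor $1/(1-q^{n+1-k})$, so such a recurrence needs an explicit telescoping certificate, like the one the paper supplies for the finite pentagonal identity in Lemma~\ref{lem:odd-seed}. The paper instead applies the transformation ${}_1\phi_1(A;C;q,z)=\frac{(z;q)_\infty}{(C;q)_\infty}\,{}_1\phi_1(Az/C;z;q,C)$ from Gasper--Rahman. Here $Az/C=q^{-1}$, so only two terms survive. Within your own approach, you can write $(aq^{n+i})^s=(cq^N)^sq^{-s}$ and split $q^{-s}=1+q^{-s}(1-q^s)$. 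For $N\geq1$ the first piece is $1/(c;q)_N$, and after $s\mapsto s+1$ the second is
\[
 -\frac{(1-q^{-N})cq^{N-1}}{1-c}\,
 {}_1\phi_1\bigl(q^{1-N};cq;q,cq\cdot q^{N-1}\bigr)
 =-\frac{cq^{N-1}-cq^{-1}}{(c;q)_N}.
\]
These two applications of the confluent Vandermonde sum give $(1-aq^{n+i}+aq^{2i})/(c;q)_N$, which matches the telescoped right-hand side.
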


\begin{proof}
It suffices to establish the coefficient identity
\begin{align}
 &\sum_{k=h}^n
 \frac{a^kq^{k(k-1)}}
 {(q;q)_{n-k}(q;q)_{k-h}(a q;q)_{h+k}}
 \notag\\
 &\quad=
 \frac{a^hq^{h(h-1)}}
 {(q;q)_{n-h}(a q;q)_{n+h}}
 +\sum_{m=h+1}^n
 \frac{a^mq^{m(m-1)}(1-a q^{2m})}
 {(q;q)_{n-m}(a q;q)_{n+m}},
 \label{eq:bailey-B-coefficient}
\end{align}
valid for \(0\leq h\leq n\).  Denote its left-hand side by
\(\mathcal L_{n,h}\).  Replacing \(k\) by \(h+k\) and
applying the terminating transformation in
\cite[p.~19, Exercise~1.18(i)]{GasperRahman2004} gives
\begin{align*}
 \mathcal L_{n,h}
 &=\frac{a^hq^{h(h-1)}}
 {(q;q)_{n-h}(a q;q)_{2h}}
 \sum_{k=0}^{n-h}
 \frac{(q^{h-n};q)_k(-1)^kq^{\binom k2}
 a^kq^{(n+h)k}}
 {(a q^{2h+1};q)_k(q;q)_k}\\
 &=\frac{a^hq^{h(h-1)}}
 {(q;q)_{n-h}(a q;q)_{2h}}
 \frac{(a q^{n+h};q)_\infty}
 {(a q^{2h+1};q)_\infty}
 \sum_{k=0}^{1}
 \frac{(q^{-1};q)_k(-1)^kq^{\binom k2}
 a^kq^{(2h+1)k}}
 {(a q^{n+h};q)_k(q;q)_k}\\
 &=\frac{a^hq^{h(h-1)}
 (1-a q^{n+h}+a q^{2h})}
 {(q;q)_{n-h}(a q;q)_{n+h}}.
\end{align*}
The intermediate expression is interpreted by continuity when a
removable singularity occurs. To compare it with the right-hand side
of \eqref{eq:bailey-B-coefficient}, set
\[
 L_m:=\frac{a^mq^{m(m-1)}}
 {(q;q)_{n-m}(a q;q)_{n+m}},
 \qquad
 V_m:=a q^{2m}(1-q^{n-m})L_m
 \quad(h\leq m\leq n).
\]
For \(h+1\leq m\leq n\), direct cancellation gives
\(V_{m-1}=(1-a q^{n+m})L_m\), and hence
\begin{equation*}
 V_{m-1}-V_m=(1-a q^{2m})L_m.
\end{equation*}
Since \(V_n=0\), summation yields
\[
 L_h+\sum_{m=h+1}^n(1-a q^{2m})L_m
 =L_h+V_h
 =(1-a q^{n+h}+a q^{2h})L_h,
\]
which proves \eqref{eq:bailey-B-coefficient}.  Substituting
\eqref{eq:bailey-general} into the formula for \(\beta'_n\),
interchanging the finite sums, and applying
\eqref{eq:bailey-B-coefficient} now gives
\begin{align*}
 \beta'_n
 &=\sum_{h=0}^n\alpha_h
 \sum_{k=h}^n
 \frac{a^kq^{k(k-1)}}
 {(q;q)_{n-k}(q;q)_{k-h}(a q;q)_{k+h}}\\
 &=\sum_{m=0}^n
 \frac{a^mq^{m(m-1)}}
 {(q;q)_{n-m}(a q;q)_{n+m}}
 \left(\alpha_m+(1-a q^{2m})
 \sum_{h=0}^{m-1}\alpha_h\right)\\
 &=\sum_{m=0}^n\frac{\alpha'_m}
 {(q;q)_{n-m}(a q;q)_{n+m}}.
\end{align*}
This is the Bailey-pair relation for \((\alpha'_n,\beta'_n)\).
\end{proof}

We also require transformations that lower and raise the relative
parameter from \(a\) to \(a/q\) and \(aq\), respectively.

\begin{lemma}[{\cite[Lemma~3.1]{Lovejoy2022}}]
\label{lem:bailey-lower}
If \((\alpha_n,\beta_n)\) is a Bailey pair relative to
\((a,q)\), then the following sequences form a Bailey
pair relative to \((a/q,q)\):
\begin{align}
 \alpha'_n(a/q;q)
 &=(1-a)
 \left(\frac{q^n\alpha_n(a;q)}{1-a q^{2n}}
 -\frac{q^{n-1}\alpha_{n-1}(a;q)}
 {1-a q^{2n-2}}\right),\notag\\
 \beta'_n(a/q;q)&=q^n\beta_n(a;q).
 \label{eq:bailey-lower}
\end{align}
The second term in the formula for \(\alpha'_n\) is omitted when
\(n=0\).
\end{lemma}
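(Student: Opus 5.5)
The plan is to verify the defining relation \eqref{eq:bailey-general} for the new pair directly, with relative parameter $a/q$. Relative to $a/q$ the denominator $(aq;q)_{n+k}$ becomes $(a;q)_{n+k}$. So we must show, for every $n\geq0$,
\[
 \sum_{k=0}^n\frac{\alpha'_k(a/q;q)}{(q;q)_{n-k}(a;q)_{n+k}}=q^n\beta_n(a;q).
\]
Since $(\alpha_n,\beta_n)$ is a Bailey pair relative to $a$, the right-hand side equals
\[
 \sum_{m=0}^n\frac{q^n\alpha_m(a;q)}{(q;q)_{n-m}(aq;q)_{n+m}}.
\]
It therefore suffices to compare the coefficients of each $\alpha_m(a;q)$ on the two sides. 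All sums are finite, so no convergence issues arise.

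First, substitute the formula \eqref{eq:bailey-lower} for $\alpha'_k$ into the left-hand side and split it into two sums. In the second sum, reindex $k\mapsto m+1$. Then $\alpha_m$ appears in the $k=m$ term of the first sum and, when $m\leq n-1$, in the $k=m+1$ term of the second. Both contributions carry the common factor $(1-a)q^m\alpha_m/(1-aq^{2m})$, because $q^{(m+1)-1}=q^m$. For $0\leq m\leq n-1$ the coefficient of $\alpha_m$ is therefore
\[
 \frac{(1-a)q^m}{1-aq^{2m}}\left(\frac{1}{(q;q)_{n-m}(a;q)_{n+m}}-\frac{1}{(q;q)_{n-m-1}(a;q)_{n+m+1}}\right).
\]

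Next, put the bracket over the common denominator $(q;q)_{n-m}(a;q)_{n+m+1}$. The numerator becomes
\[
 (1-aq^{n+m})-(1-q^{n-m})=q^{n-m}(1-aq^{2m}).
\]
The factor $1-aq^{2m}$ cancels, and the coefficient reduces to
\[
 \frac{q^n(1-a)}{(q;q)_{n-m}(a;q)_{n+m+1}}=\frac{q^n}{(q;q)_{n-m}(aq;q)_{n+m}},
\]
which is exactly the required coefficient.

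The remaining case is the boundary index $m=n$, where only the first-sum term occurs. Its coefficient is
\[
 \frac{(1-a)q^n}{(1-aq^{2n})(a;q)_{2n}}=\frac{q^n}{(aq;q)_{2n}},
\]
again as required. The case $n=0$ is included here, since then $\alpha'_0=(1-a)\alpha_0/(1-a)=\alpha_0=\beta_0$.

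The only delicate point is this bookkeeping of the index shift and the endpoint $m=n$, together with the convention that the $\alpha_{n-1}$ term is omitted at $n=0$. Apart from that, the proof is a telescoping cancellation that follows from the identity $(1-aq^{n+m})-(1-q^{n-m})=q^{n-m}(1-aq^{2m})$. Parameter values at which some $1-aq^{2m}$ vanishes are handled by continuity, as stipulated at the start of this subsection.
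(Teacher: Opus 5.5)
Your proof is correct. For $m<n$ the coefficient of $\alpha_m$ reduces through $(1-aq^{n+m})-(1-q^{n-m})=q^{n-m}(1-aq^{2m})$ and $(a;q)_{n+m+1}=(1-a)(aq;q)_{n+m}$. The endpoint $m=n$ works because $(1-aq^{2n})(a;q)_{2n}=(1-a)(aq;q)_{2n}$.

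The paper does not prove this lemma; it only cites Lovejoy's Lemma~3.1. Your direct check therefore gives a self-contained proof. It follows the same pattern the paper uses for Lemma~\ref{lem:bailey-B}: substitute the Bailey relation, interchange the finite sums, and verify a coefficient identity by telescoping.

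One implicit restriction is $a\neq1$, so that the denominators $(a;q)_{n+k}$ relative to $a/q$ are nonzero. This is covered by the standing assumption, and in the paper's applications $a\in\{q,q^2\}$.
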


\begin{lemma}[{\cite[Eqs.~(2.4)--(2.5)]{Lovejoy2004}}]
\label{lem:bailey-raise}
If \((\alpha_n,\beta_n)\) is a Bailey pair relative to
\((a,q)\), then the following sequences form a Bailey
pair relative to \((a q,q)\):
\begin{align*}
 \alpha'_n(a q;q)
 &=\frac{(1-a q^{2n+1})(a q/b;q)_n
 (-b)^nq^{\binom n2}}
 {(1-a q)(bq;q)_n}
 \sum_{k=0}^n\frac{(b;q)_k(-b)^{-k}q^{-\binom k2}}
 {(a q/b;q)_k}\,\alpha_k(a;q),\\
 \beta'_n(a q;q)
 &=\frac{(b;q)_n}{(bq;q)_n}\,\beta_n(a;q).
\end{align*}
Here \(b\in\C\setminus\{0\}\) is an auxiliary parameter.
The specialization \(b\to0\) gives
\begin{align}
 \alpha'_n(a q;q)
 &=\frac{1-a q^{2n+1}}{1-a q}
 a^nq^{n^2}
 \sum_{k=0}^na^{-k}q^{-k^2}\alpha_k(a;q),
 \notag\\
 \beta'_n(a q;q)&=\beta_n(a;q),
 \label{eq:bailey-raise}
\end{align}
whereas the limit \(b\to\infty\) gives
\begin{align}
 \alpha'_n(a q;q)
 &=\frac{1-a q^{2n+1}}{1-a q}\,q^{-n}
 \sum_{k=0}^n\alpha_k(a;q),
 &\beta'_n(a q;q)&=q^{-n}\beta_n(a;q).
 \label{eq:bailey-raise-infty}
\end{align}
\end{lemma}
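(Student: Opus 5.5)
The plan is to verify the Bailey relation \eqref{eq:bailey-general} for \((\alpha'_n,\beta'_n)\) relative to \(aq\) directly, by the same coefficient-comparison scheme used in the proof of Lemma~\ref{lem:bailey-B}. Then the two specializations follow by passing to termwise limits in finite sums.

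\emph{Reduction to a coefficient identity.} Substitute \eqref{eq:bailey-general} for \(\beta_n(a;q)\) into \(\beta'_n(aq;q)=\frac{(b;q)_n}{(bq;q)_n}\beta_n(a;q)\). In the claimed right-hand side \(\sum_{m=0}^n\alpha'_m/\bigl((q;q)_{n-m}(aq^2;q)_{n+m}\bigr)\), insert the definition of \(\alpha'_m\) and interchange the two finite sums. It then suffices to prove, for \(0\leq k\leq n\), the identity
\[
 \frac{(b;q)_n}{(bq;q)_n(q;q)_{n-k}(aq;q)_{n+k}}
 =\frac{(b;q)_k(-b)^{-k}q^{-\binom k2}}{(aq/b;q)_k}
 \sum_{m=k}^n\frac{(1-aq^{2m+1})(aq/b;q)_m(-b)^mq^{\binom m2}}
 {(1-aq)(bq;q)_m(q;q)_{n-m}(aq^2;q)_{n+m}}.
\]
Since \((1-aq)(aq^2;q)_{n+m}=(aq;q)_{n+m+1}\), the inner summand is \((1-aq^{2m+1})L_m\), where
\[
 L_m:=\frac{(aq/b;q)_m(-b)^mq^{\binom m2}}{(bq;q)_m(q;q)_{n-m}(aq;q)_{n+m+1}}.
\]

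\emph{Evaluation of the inner sum.} As in Lemma~\ref{lem:bailey-B}, I will look for a telescoping sequence \(V_m\) with \(V_{m-1}-V_m=(1-aq^{2m+1})L_m\) and \(V_n=0\). The factor \((q;q)_{n-m}\) forces \(V_m\) to vanish at \(m=n\). The natural ansatz is
\[
 V_m=c\,(1-q^{n-m})\,\frac{(aq/b;q)_{m+1}(-b)^{m+1}q^{\binom{m+1}2}}{(bq;q)_m(q;q)_{n-m}(aq;q)_{n+m+1}}\cdot\frac{1}{\,1-bq^{?}\,},
\]
with the constant and the \(b\)-dependent linear factors fixed by matching the ratio \(V_{m-1}/L_m\). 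This ratio is a rational function of \(q^m\) of low degree, so the matching reduces to a polynomial identity in \(q^m\) that can be checked by hand.

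Should the telescoping ansatz prove awkward, I would instead recognize the sum over \(m-k\) as a terminating very-well-poised \({}_6\phi_5\) in which one numerator parameter tends to infinity, and apply the \({}_6\phi_5\) summation (Gasper--Rahman (2.7.1)). Either route should produce
\[
 \sum_{m=k}^n(1-aq^{2m+1})L_m
 =\frac{(aq/b;q)_k(-b)^kq^{\binom k2}(b;q)_n}{(b;q)_k(bq;q)_n(q;q)_{n-k}(aq;q)_{n+k}},
\]
which is exactly the required identity. I expect this closed-form evaluation, and in particular pinning down the correct telescoping partner, to be the main obstacle. The remaining steps are bookkeeping.

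\emph{Specializations.} All sums are finite, so limits can be taken termwise.

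As \(b\to0\), we have \((aq/b;q)_n(-b)^n\to(aq)^nq^{\binom n2}\). Hence the prefactor tends to \(a^nq^{n^2}\) after multiplying by \(q^{\binom n2}\). Similarly, \((b;q)_k(-b)^{-k}q^{-\binom k2}/(aq/b;q)_k\to a^{-k}q^{-k^2}\), and \((b;q)_n/(bq;q)_n\to1\). This gives \eqref{eq:bailey-raise}.

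As \(b\to\infty\), we have \((aq/b;q)_n\to1\) and \((-b)^nq^{\binom n2}/(bq;q)_n\to q^{-n}\). Also \((b;q)_k(-b)^{-k}q^{-\binom k2}\to1\), and \((b;q)_n/(bq;q)_n=(1-b)/(1-bq^n)\to q^{-n}\). This gives \eqref{eq:bailey-raise-infty}.
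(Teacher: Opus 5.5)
Your proposal is correct. The paper gives no argument for this lemma: it quotes Lovejoy's Eqs.~(2.4)--(2.5) and records the two limits. You instead verify the Bailey relation relative to $aq$ directly, using the same coefficient-comparison and telescoping device the paper uses for Lemma~\ref{lem:bailey-B}.

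The step you flagged as the main obstacle closes with your ansatz as written, taking $c=1$ and the exponent marked $?$ equal to $n$. Equivalently, let $R_k$ denote your claimed value of $\sum_{m=k}^n(1-aq^{2m+1})L_m$ and put $V_m:=R_{m+1}$. Then $V_n=0$ because of the factor $1/(q;q)_{n-m-1}$, and
\[
 1-\frac{R_{m+1}}{R_m}
 =\frac{(1-bq^m)(1-aq^{n+m+1})+(bq^m-aq^{2m+1})(1-q^{n-m})}
 {(1-bq^m)(1-aq^{n+m+1})}
 =\frac{(1-bq^n)(1-aq^{2m+1})}{(1-bq^m)(1-aq^{n+m+1})}.
\]
Two further identities are needed:
\[
 \frac{(b;q)_{n+1}}{(b;q)_{m+1}(bq;q)_n}=\frac{1}{(bq;q)_m},
 \qquad
 (aq;q)_{n+m}(1-aq^{n+m+1})=(aq;q)_{n+m+1}.
\]
With these, $V_{m-1}-V_m=R_m-R_{m+1}=(1-aq^{2m+1})L_m$. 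Hence
\[
 \sum_{m=k}^n(1-aq^{2m+1})L_m=V_{k-1}-V_n=R_k,
\]
so the ${}_6\phi_5$ fallback is unnecessary. Your termwise limits as $b\to0$ and $b\to\infty$ are also correct.

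The paper's citation buys brevity. Your argument buys a self-contained, purely finite verification of the general-$b$ transformation. As a result, both specializations \eqref{eq:bailey-raise} and \eqref{eq:bailey-raise-infty} rest on nothing outside the paper; these are used later for the even components and the product evaluations.
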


The proofs use the following pairs from Groups C and G of Slater's
list \cite[p.~469]{Slater1951} and the indicated variants.
The symbols \(\alpha_n(a;q)\) and \(\beta_n(a;q)\) are local to each
pair and will be reused in subsequent constructions.
For the starred pairs and the pairs
\(\mathrm{G2}\) and \(\mathrm{G5}\), we use the normalizations in
\cite[Subsections~2.2--2.3]{DuJiShenXu2026}.

In all these pairs, \(\alpha_0(a;q)=1\).  The formulas for
\(\alpha_{2n}\) in \(\mathrm{C1}\) and \(\mathrm{C5}\), and for
\(\alpha_n\) in \(\mathrm{G1}\), apply when \(n\geq1\); all other
formulas apply when \(n\geq0\).
\begin{align*}
 &\alpha_{2n}(1;q)=(-1)^nq^{3n^2}(q^n+q^{-n}),\quad
  \alpha_{2n+1}(1;q)=0,
  \\*
 &\beta_n(1;q)=\frac1{(q;q)_n(q;q^2)_n};\tag{C1}\label{eq:pair-C1}\\[4pt]
 &\alpha_{2n}(q;q)=(-1)^nq^{3n^2+3n},\quad
  \alpha_{2n+1}(q;q)=(-1)^{n+1}q^{3n^2+3n},
\\*
 &\beta_n(q;q)=\frac{q^n}{(q;q)_n(q^3;q^2)_n};  \tag{C4}\label{eq:pair-C4}\\[4pt]
 &\alpha_{2n}(1;q)=(-1)^nq^{n^2}(q^n+q^{-n}),\quad
  \alpha_{2n+1}(1;q)=0,
  \\*
 &\beta_n(1;q)=\frac{q^{\binom n2}}{(q;q)_n(q;q^2)_n};\tag{C5}\label{eq:pair-C5}\\[4pt]
 &\alpha_n(1;q)=(-1)^nq^{\frac12n^2+\frac12\binom n2}(1+q^{n/2}),\quad
  \beta_n(1;q)=\frac1{(-q^{1/2};q)_n(q^2;q^2)_n};
  \tag{G1}\label{eq:pair-G1}\\[4pt]
 &\alpha_n(q;q)=(-1)^nq^{\frac32\binom{n+1}2}
  \frac{q^{-n}-q^{n+1}}{1-q},\quad
  \beta_n(q;q)=\frac1{(-q^{1/2};q)_n(q^2;q^2)_n};
  \tag{G1*}\label{eq:pair-G1-star}\\[4pt]
 &\alpha_n(q;q)=(-1)^nq^{\frac32\binom{n+1}2}
  \frac{q^{-n/2}-q^{(n+1)/2}}{1-q^{1/2}},\quad
  \beta_n(q;q)=\frac1{(-q^{3/2};q)_n(q^2;q^2)_n};
  \tag{G2}\label{eq:pair-G2}\\[4pt]
 &\alpha_n(q;q)=(-1)^nq^{n^2/4-3n/4}\frac{1-q^{2n+1}}{1-q},\quad
  \beta_n(q;q)=\frac{(-1)^nq^{n^2/2-n}}
  {(-q^{1/2};q)_n(q^2;q^2)_n};
  \tag{G4**}\label{eq:pair-G4-starstar}\\[4pt]
 &\alpha_n(q;q)=(-1)^nq^{\frac12\binom n2}
  \frac{1-q^{n+1/2}}{1-q^{1/2}},\quad
  \beta_n(q;q)=\frac{(-1)^nq^{n^2/2}}
  {(-q^{3/2};q)_n(q^2;q^2)_n}.
  \tag{G5}\label{eq:pair-G5}
\end{align*}

\subsection{The odd Euler sums}

For $r\geq2$ and $1\leq j\leq r$, abbreviate the multiple sum
in Theorem~\ref{thm:euler} by
\begin{equation}
 \Eodd_{r,j}(q):=
 \sum_{N_1\geq\cdots\geq N_{2r-2}\geq0}
 \frac{q^{\sum_{a=1}^{2r-2}N_a^2+
           \sum_{a=1}^{2j-2}N_a+2\sum_{a=2j-1}^{2r-2}N_a}}
 {\prod_{a=1}^{2r-3}(q;q)_{N_a-N_{a+1}}}.
 \label{eq:Eodd}
\end{equation}

For \(n\in\Z_{\geq0}\), define the finite theta sum
\[
 T_n(q):=\sum_{t=-n}^{n-1}(-1)^tq^{-(3t^2+t)/2}.
\]
In particular, \(T_0(q)=0\). The seed pair below will be used to
represent \(\Eodd_{r,j}\). It is related to the pair with constant
\(\beta\)-sequence in \cite[Lemma~10]{Lovejoy2002}: invert
\eqref{eq:bailey-raise} with \(a=1\), and subtract the resulting
pair relative to \(1\) from the unit pair, whose \(\beta\)-sequence
is \(\delta_{n,0}\). Here \(\delta\) is the Kronecker delta.
The following direct proof does not require this derivation.

\begin{lemma}
\label{lem:odd-seed}
The following sequences form a Bailey pair relative to \((1,q)\):
\begin{align}
 \alpha_0(1;q)&=\beta_0(1;q)=0,
 &\beta_n(1;q)&=-1\quad(n\geq1),
 \label{eq:seedbeta}\\
 \alpha_n(1;q)&=q^{2n^2-n}(1-q^{2n})T_n(q)
 &&(n\geq1).
 \label{eq:seedalpha}
\end{align}
\end{lemma}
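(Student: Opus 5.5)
The plan is to verify the pair through the inverse relation \eqref{eq:bailey-inverse} with \(a=1\), rather than through the defining relation \eqref{eq:bailey-general}. Since \eqref{eq:bailey-general} and \eqref{eq:bailey-inverse} are equivalent for a Bailey pair, it suffices to check two things. The first is \(\alpha_0=\beta_0=0\), which holds by definition. The second is that, for \(n\geq1\),
\[
 \alpha_n(1;q)=-(1-q^{2n})\sum_{k=1}^{n}
 \frac{(q;q)_{n+k-1}(-1)^{n-k}q^{\binom{n-k}{2}}}{(q;q)_{n-k}},
\]
where the \(k=0\) term drops because \(\beta_0=0\). After cancelling the common factor \(1-q^{2n}\), the lemma reduces to the finite identity
\[
 S_n:=\sum_{k=1}^{n}(-1)^{n-k}q^{\binom{n-k}{2}}
 \frac{(q;q)_{n+k-1}}{(q;q)_{n-k}}
 =-q^{2n^2-n}T_n(q)\qquad(n\geq1).
\]
For \(n=1\) both sides equal \(1-q\): the left side is \((q;q)_1\), and \(-qT_1=-q(1-q^{-1})=1-q\).

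I would prove this identity by induction on \(n\), comparing consecutive values. On the right, \(T_n-T_{n-1}\) consists of exactly the two boundary terms \(t=-n\) and \(t=n-1\), namely
\[
 (-1)^nq^{-(3n^2-n)/2}+(-1)^{n-1}q^{-(3n^2-5n+2)/2}.
\]
Hence \(q^{-(2n^2-n)}S_n-q^{-(2(n-1)^2-(n-1))}S_{n-1}\) must equal minus this two-term expression. On the left, I would reindex by \(m=n-k\), so that \(S_n=\sum_{m=0}^{n-1}(-1)^mq^{\binom m2}(q;q)_{2n-m-1}/(q;q)_m\). I would then try to telescope the difference using a relation of the form \(V_{m-1}-V_m=(\text{summand})\), exactly as in the proof of Lemma~\ref{lem:bailey-B}: guess \(V_m\) as a simple multiple of the \(m\)-th summand, and verify the relation by direct cancellation of \(q\)-Pochhammer factors.

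As an alternative route, I would treat \(S_n\) as a terminating basic hypergeometric sum and evaluate it in closed form. Writing \((q;q)_{2n-m-1}=(q;q)_{2n-1}/(q^{2n-m};q)_m\) and using \((q^{1-2n};q)_m\) to absorb the reversed Pochhammer turns the summand into a \({}_1\phi_1\)- or \({}_2\phi_1\)-type term. Such a sum is amenable to the same Gasper--Rahman transformation \cite[Exercise~1.18(i)]{GasperRahman2004} used in Lemma~\ref{lem:bailey-B}, or to \(q\)-Chu--Vandermonde. The paper's own remark suggests a cleaner conceptual route:
\begin{itemize}
\item the unit pair relative to \(1\) has \(\alpha_n=(1-q^{2n})(-1)^nq^{\binom n2}(q;q)_{n-1}\);
\item by linearity, our pair is the unit pair minus the constant-\(\beta\) pair with \(\beta_n\equiv1\);
\item so it is enough to know the \(\alpha\)-sequence of the constant pair, obtained by inverting \eqref{eq:bailey-raise} at \(a=1\) and matching with \cite[Lemma~10]{Lovejoy2002}.
\end{itemize}

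The main obstacle is the finite identity for \(S_n\): finding the telescoping certificate, or equivalently identifying the correct hypergeometric transformation. I would first guess the certificate by computing \(S_n\) for \(n=2,3\), checking that it matches \(-q^{2n^2-n}T_n\), and reading off the pattern. If telescoping resists, I would fall back on the derivation through \eqref{eq:bailey-raise}, since the \(b\to0\) raising lemma is already available. Under that lemma, partial sums of \(q^{-k^2}\alpha_k\) naturally produce the alternating pentagonal-type exponents \(-(3t^2+t)/2\) appearing in \(T_n\).
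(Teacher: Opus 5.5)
Your proposal is essentially the paper's proof. The paper makes the same reduction through \eqref{eq:bailey-inverse} to the finite identity \(-S_n=q^{2n^2-n}T_n(q)\), proves it by the same first-order recurrence in \(n\), and supplies the telescoping certificate you leave as the main obstacle: with \(U_{n,k}=(-1)^kq^{\binom k2}(q;q)_{2n-k-1}/(q;q)_k\) (the \(m=k\) term of your reindexed \(S_n\)) and \(C_{n,k}=q^{-2k}(q^{2n}-q^k)(q^k-1)(q^{2n+k+1}+q^{2n+1}-q^k)\), one has \(U_{n+1,k}-q^{4n+1}U_{n,k}=C_{n,k+1}U_{n,k+1}-C_{n,k}U_{n,k}\). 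Summing over \(0\leq k<n\) with \(C_{n,0}=0\), and keeping the extra endpoint \(U_{n+1,n}\), gives \(S_{n+1}=q^{4n+1}S_n+(-1)^nq^{\binom{n+1}{2}}(1-q^{2n+1})\), which is the recurrence forced by the two new terms of \(T_{n+1}\); separately, in your side remark the unit pair should read \(\alpha_n=(-1)^nq^{\binom n2}(1+q^n)\), since you dropped the factor \(1/(q;q)_n\).
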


\begin{proof}
Substituting the stated \(\beta\)-sequence into \eqref{eq:bailey-inverse}
with \(a=1\) reduces the verification of the Bailey-pair relation to
the finite pentagonal identity
\begin{equation}
 -\sum_{k=0}^{n-1}(-1)^kq^{\binom{k}{2}}
 \frac{(q;q)_{2n-k-1}}{(q;q)_k}
 =q^{2n^2-n}T_n(q).
 \label{eq:finitepentagonal}
\end{equation}
We prove this identity by showing that its two sides satisfy the
same first-order recurrence. Define
\begin{align*}
 U_{n,k}&:=(-1)^kq^{\binom{k}{2}}
 \frac{(q;q)_{2n-k-1}}{(q;q)_k},
 &F_n&:=-\sum_{k=0}^{n-1}U_{n,k},\\
 C_{n,k}&:=
 \frac{(q^{2n}-q^k)(q^k-1)
 (q^{2n+k+1}+q^{2n+1}-q^k)}{q^{2k}},
 &G_n&:=q^{2n^2-n}T_n(q).
\end{align*}
For \(n\geq1\) and \(0\leq k<n\), direct simplification gives
\begin{equation}
 U_{n+1,k}-q^{4n+1}U_{n,k}
 =C_{n,k+1}U_{n,k+1}-C_{n,k}U_{n,k}.
 \label{eq:finitecertificate}
\end{equation}
Summing \eqref{eq:finitecertificate} over \(0\leq k<n\), using
\(C_{n,0}=0\), and retaining the additional endpoint
\(U_{n+1,n}\) gives
\begin{equation*}
 F_{n+1}=q^{4n+1}F_n
 +(-1)^{n+1}q^{\binom{n+1}{2}}(1-q^{2n+1}),
 \qquad F_1=q-1.
\end{equation*}
Separating the two new endpoint terms in \(T_{n+1}\) shows that
\(G_n\) satisfies the same recurrence and initial value.  Thus
\(F_n=G_n\), which proves \eqref{eq:finitepentagonal} and hence the
Bailey-pair relation for \eqref{eq:seedbeta}--\eqref{eq:seedalpha}
relative to \(1\).
\end{proof}

The following identity relates the finite theta sums \(T_n(q)\)
to \(\Eodd_{r,j}(q)\). For \(1\leq j\leq r\), let
\[
 S_{r,j}(q):=\sum_{n\geq1}
 \left(q^{2rn^2-(2j-1)n}-q^{2rn^2+(2j-1)n}\right)T_n(q).
\]

\begin{lemma}
\label{lem:oddBailey}
For \(r\geq2\) and \(1\leq j\leq r\),
\begin{equation}
 S_{r,j}(q)=-q^{2(r-j)}(q;q)_\infty\Eodd_{r,j}(q).
 \label{eq:SoddE}
\end{equation}
\end{lemma}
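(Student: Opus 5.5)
Starting from the seed pair of Lemma~\ref{lem:odd-seed}, relative to $a=1$, we build a Bailey chain of length $2r-2$ and then pass to the limit via \eqref{eq:bailey-limit}. The exponent in $\Eodd_{r,j}$ has linear part $N_a$ for $a\le 2j-2$ and $2N_a$ for $a\ge 2j-1$. This suggests two kinds of step. First, apply Lemma~\ref{lem:bailey-B} $2j-2$ times; its $\beta$-step carries the weight $q^{k^2-k}$ with $a=1$. Then apply the standard Bailey lemma \eqref{eq:bailey-chain} $2r-2j$ times, with weight $q^{k^2}$. After a shift of normalisation, the linear terms $-N_a$, $0$ become $N_a$, $2N_a$: the innermost sum carries $\beta_{N_{2r-2}}=-1$ for $N_{2r-2}\ge1$, and the limit multiplies by $(q;q)_\infty^2$. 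Writing the chain from outside in, $\beta^{\mathrm{final}}_n=\sum \prod_a q^{N_a^2+\epsilon_aN_a}/(q;q)_{N_{a-1}-N_a}\,\beta_{N_{2r-2}}$ with $N_0=n$. Letting $n\to\infty$ gives $1/(q;q)_\infty$ times a sum over $N_1\ge\cdots\ge N_{2r-2}\ge1$ with $2r-3$ Pochhammer denominators, which is exactly the shape of $\Eodd$.

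The mismatch in linear terms is repaired by the substitution $N_a\mapsto N_a+1$, valid since $N_{2r-2}\ge1$ because $\beta_0=0$. This turns $q^{N_a^2-N_a}$ into $q^{N_a^2+N_a}$ and $q^{N_a^2}$ into $q^{N_a^2+2N_a+1}$. It produces a constant factor $q^{2r-2j}$, up to the sign from $\beta=-1$. This is the expected prefactor $-q^{2(r-j)}$, and the ordering of the two step types is forced by which indices get $N_a$ versus $2N_a$. On the $\alpha$ side, \eqref{eq:bailey-limit} gives $\sum_n\alpha^{\mathrm{final}}_n/(q;q)_\infty^2$. Equating the two sides yields $\sum_n\alpha_n^{\mathrm{final}}=-q^{2(r-j)}(q;q)_\infty\Eodd_{r,j}$, provided the $\alpha$-series converges absolutely.

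It remains to compute $\alpha^{\mathrm{final}}$. Each of the $2r-2j$ applications of \eqref{eq:bailey-chain} multiplies $\alpha_n$ by $q^{n^2}$. The iterates of Lemma~\ref{lem:bailey-B} are harder, since each one adds $(1-q^{2n})\sum_{k<n}\alpha_k$ to $\alpha_n$ and multiplies by $q^{n^2-n}$. For the seed, $\alpha_k=q^{2k^2-k}(1-q^{2k})T_k$. The key claim to prove by induction on $i$ is that after $i$ applications of Lemma~\ref{lem:bailey-B} one has
\[
\alpha^{(i)}_n=q^{(i+2)n^2-(i+1)n}\,(1-q^{2n})\,\Bigl(\textstyle\sum_{t} \ldots\Bigr),
\]
with the bracket expressible through $T_n$. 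The most natural target is $\alpha^{(i)}_n=q^{(i+2)n^2-(2i+1)n}\cdot\frac{1-q^{(2i+2)n}}{1}\cdots$, chosen so that the final $\alpha$-sum becomes $\sum_n(q^{2rn^2-(2j-1)n}-q^{2rn^2+(2j-1)n})T_n$. I would first verify the partial sums for $i=1$ directly: $\sum_{k<n}\alpha_k$ telescopes against $q^{2n^2-n}T_n$ thanks to the recurrence used in the proof of Lemma~\ref{lem:odd-seed}, namely $G_{n+1}-q^{4n+1}G_n=$ endpoint. I would then guess the general closed form $\alpha^{(i)}_n=q^{(i+2)n^2-n}(q^{-in}-q^{in})T_n$, which is consistent with $i=0$ and matches the target after the $q^{n^2}$ steps since $2+2(j-1)+2(r-j)=2r$. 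The inductive step reduces to the telescoping identity $\sum_{k<n}q^{(i+2)k^2-k}(q^{-ik}-q^{ik})T_k$ expressed in closed form. This is the main obstacle. I would attack it as in Lemma~\ref{lem:odd-seed}, by checking that both sides satisfy the same first-order recurrence in $n$, using $T_{n+1}-T_n=(-1)^nq^{-(3n^2+n)/2}(\ldots)$. If that fails, I would fall back on expressing $T_n$ via the pentagonal identity \eqref{eq:finitepentagonal} and interchanging sums.

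Absolute convergence of $\sum\alpha_n$ follows since $|T_n|\ll n|q|^{-(3n^2+n)/2}$ is dominated by $q^{2rn^2}$ with $2r\ge4>3/2$. This justifies the limit \eqref{eq:bailey-limit} and completes \eqref{eq:SoddE}.
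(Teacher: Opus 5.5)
Your skeleton is the right one: the seed pair of Lemma~\ref{lem:odd-seed}, then $2r-2j$ steps of \eqref{eq:bailey-chain} and $2j-2$ steps of Lemma~\ref{lem:bailey-B}, then \eqref{eq:bailey-limit} and the shift $N_a\mapsto N_a+1$. However, the order you state is backwards. The step applied last becomes the outermost summation. If you apply Lemma~\ref{lem:bailey-B} first, its weights $q^{N_a^2-N_a}$ land on the innermost indices $N_{2r-2j+1},\ldots,N_{2r-2}$. After the shift, the linear part is then $2\sum_{a\le 2r-2j}N_a+\sum_{a>2r-2j}N_a$, which is the reverse of $\Eodd_{r,j}$. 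For $2\le j\le r-1$ these are different series. For $(r,j)=(3,2)$, the coefficient of $q^2$ is $1$ in $\Eodd_{3,2}$, coming from $N=(1,0,0,0)$, but $0$ in the reversed sum. You must apply the $2r-2j$ standard steps first and the $2j-2$ steps of Lemma~\ref{lem:bailey-B} last.

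The more serious gap is on the $\alpha$-side, which you leave as a guess, and the guess is false. The formula $q^{(i+2)n^2-n}(q^{-in}-q^{in})T_n$ vanishes identically at $i=0$, so it does not reproduce the seed. Moreover, one application of Lemma~\ref{lem:bailey-B} to the seed already gives $\alpha^{(1)}_2=q^2(1-q^4)(-1+2q+q^2-q^3-q^4-q^5+q^6)$. Its leading term is $-q^2$, whereas your formula starts at $q^3$. The partial sums $\sum_{k<n}\alpha_k$ of the seed are genuine double sums, so there is no closed form of this kind to induct on.

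The missing idea is that you never need the iterates themselves. Only $\sum_n\alpha_n$ enters \eqref{eq:bailey-limit}, and the $\alpha$-map $\mathsf B$ of Lemma~\ref{lem:bailey-B} (with $a=1$) acts on that total by an explicit functional. Interchange the sums and use $\sum_{n>k}(q^{n(n-1)}-q^{n(n+1)})=q^{k(k+1)}$. This gives $\sum_n(\mathsf B\alpha)_n=\sum_nq^{n^2}(q^{-n}+q^{n})\alpha_n$. By induction, using $q^{n(n-1)}(1-q^{2n})w_t(n)=q^{(t+1)n(n-1)}-q^{(t+1)n(n+1)}$ for $w_t(n):=q^{tn^2}\sum_{h=0}^{t}q^{(2h-t)n}$, one gets
\[
 \sum_{n\ge0}(\mathsf B^t\alpha)_n=\sum_{n\ge0}w_t(n)\,\alpha_n .
\]
Take $t=2j-2$ and $\alpha_n=q^{(2r-2j+2)n^2-n}(1-q^{2n})T_n$, which is the seed after the standard steps. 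The factor $(1-q^{2n})\sum_{h=0}^{t}q^{(2h-t-1)n}$ then telescopes to $q^{-(2j-1)n}-q^{(2j-1)n}$, giving exactly $S_{r,j}$.

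This is a second reason the $\mathsf B$-steps must come last. In your order you would need $\sum_nq^{(2r-2j)n^2}(\mathsf B^{2j-2}\alpha)_n$. The corresponding tail sums $\sum_{n>k}q^{sn^2}(q^{n(n-1)}-q^{n(n+1)})$ with $s\ge1$ do not telescope.
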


\begin{proof}
Specializing Lemmas~\ref{lem:bailey-lemma} and~\ref{lem:bailey-B}
to \(a=1\) gives the two transformations
\begin{align}
 \mathsf S_1:\quad
 \alpha'_n&=q^{n^2}\alpha_n,
 &\beta'_n&=\sum_{k=0}^n
 \frac{q^{k^2}}{(q;q)_{n-k}}\beta_k,
 \notag\\
 \mathsf B:\quad
 \alpha'_n&=q^{n(n-1)}
 \left(\alpha_n+(1-q^{2n})\sum_{k=0}^{n-1}\alpha_k\right),
 &\beta'_n&=\sum_{k=0}^n
 \frac{q^{k(k-1)}}{(q;q)_{n-k}}\beta_k
 \label{eq:Bbailey}
\end{align}
which preserve Bailey pairs relative to \(1\). For \(t\in\N\),
let \(\mathsf B^t\) denote the
\(t\)-fold iterate of the transformation of the \(\alpha\)-sequence in
\eqref{eq:Bbailey}, with \(\mathsf B^0\) the identity.  For a rapidly
decreasing sequence \(\alpha=(\alpha_n)_{n\geq0}\), the sum of its
\(t\)-th iterate is
\begin{equation}
 \sum_{n\geq0}(\mathsf B^t\alpha)_n
 =\sum_{n\geq0}q^{tn^2}
 \left(\sum_{h=0}^tq^{(2h-t)n}\right)\alpha_n.
 \label{eq:Bfunctional}
\end{equation}
Here rapid decrease means
\(\lvert\alpha_n\rvert\leq C_0\lvert q\rvert^{\gamma_0n^2}\) for
constants \(C_0,\gamma_0>0\).  The initial sequence
\eqref{eq:seedalpha} satisfies this condition, since its smallest
exponent is \((n^2-n)/2\).  The condition is also preserved by every
finite sequence of the two Bailey transformations used here.  Thus all
rearrangements in \eqref{eq:Bfunctional} are absolutely convergent.
To prove \eqref{eq:Bfunctional}, set
\(w_t(m):=q^{tm^2}\sum\limits_{h=0}^tq^{(2h-t)m}\).
The identity
\begin{equation*}
 q^{n(n-1)}(1-q^{2n})w_t(n)
 =q^{(t+1)n(n-1)}-q^{(t+1)n(n+1)}
\end{equation*}
shows that the tail sum over \(n>m\) telescopes to
\(q^{(t+1)m(m+1)}\).  Adding the \(n=m\) term gives
\(w_{t+1}(m)\), proving \eqref{eq:Bfunctional} by induction.

Starting with \eqref{eq:seedbeta}--\eqref{eq:seedalpha}, apply
\(\mathsf S_1\) \(2r-2j\) times, followed by
\(\mathsf B\) \(2j-2\) times.  With \(t:=2j-2\),
equation \eqref{eq:Bfunctional} gives the sum of the final \(\alpha\)-sequence as
\begin{equation}
 \sum_{n\geq1}q^{2rn^2}T_n(q)(1-q^{2n})
 \sum_{h=0}^{t}q^{(2h-t-1)n}=S_{r,j}(q),
 \label{eq:finalalphasum}
\end{equation}
because multiplication by \(1-q^{2n}\) telescopes the finite
\(h\)-sum to its two endpoint terms.  Denote the
resulting pair by \((\alpha_n^{(2r)},\beta_n^{(2r)})\).
Its \(\beta\)-sequence is
\begin{equation}
 \beta_n^{(2r)}
 =-\sum_{n\geq N_1\geq\cdots\geq N_{2r-2}\geq1}
 \frac{q^{\sum_{a=1}^{2j-2}(N_a^2-N_a)+\sum_{a=2j-1}^{2r-2}N_a^2}}
 {(q;q)_{n-N_1}
 (q;q)_{N_1-N_2}(q;q)_{N_2-N_3}\cdots
 (q;q)_{N_{2r-3}-N_{2r-2}}}.
 \label{eq:oddfinalbeta}
\end{equation}
The Gaussian bound on the final \(\alpha\)-sequence justifies
\eqref{eq:bailey-limit}. In \eqref{eq:oddfinalbeta}, the positive
quadratic part and the uniform bounds on the reciprocal Pochhammer
factors justify passage to the limit under the nested sum by dominated
convergence. Hence \eqref{eq:finalalphasum} and
\eqref{eq:oddfinalbeta} give
\begin{align*}
 S_{r,j}(q)
 &=(q;q)_\infty^2\lim_{n\to\infty}\beta_n^{(2r)}
 \notag\\
 &=-(q;q)_\infty
 \sum_{N_1\geq\cdots\geq N_{2r-2}\geq1}
 \frac{q^{\sum_{a=1}^{2j-2}(N_a^2-N_a)+\sum_{a=2j-1}^{2r-2}N_a^2}}
 {(q;q)_{N_1-N_2}(q;q)_{N_2-N_3}\cdots
  (q;q)_{N_{2r-3}-N_{2r-2}}}.
\end{align*}
Setting \(N_a=\widetilde N_a+1\) changes the lower bounds to zero
and contributes the constant \(2r-2j\) to the exponent. The
remaining quadratic and linear terms are exactly those of
\(\Eodd_{r,j}(q)\) in \eqref{eq:Eodd}, proving \eqref{eq:SoddE}.

\end{proof}

\subsection{The signed Nahm sums}

The symmetric product for the transpose family has the
quadratic-form identity
\begin{equation}
 \frac12\mathbf n^TB^\vee D^\vee\mathbf n
 =\frac12n_1^2+n_1n_r+
  \sum_{a=1}^{r-1}(n_{a+1}+\cdots+n_r)^2+\frac12n_r^2.
 \label{eq:BD-dual}
\end{equation}
It follows directly from the quadratic-form identity
\eqref{eq:BD-quadratic}, since
$D^\vee=2D^{-1}$ and
$B^\vee D^\vee=2D^{-1}(BD)D^{-1}$.
For $1\leq j\leq r-1$, define the integer vectors
\begin{align}\label{b-+}
 \mathbf b^+_{r,j}
  &=(1,1,2,\ldots,j-1,j+1,j+3,\ldots,2r-j-1)^T,\\
 \mathbf b^-_{r,j}\label{b--}
  &=(0,1,2,\ldots,j-1,j+1,j+3,\ldots,
       2r-j-3,2r-j-2)^T
\end{align}
where each progression is empty if its initial term exceeds its
terminal term. These formulas include $r=2$ without an endpoint convention.
The two signed branches are defined by
\begin{equation}
 \mathcal N^\pm_{r,j}(q):=
 \sum_{\mathbf n=(n_1,\ldots,n_r)^T\in\N^r}
 \frac{(-1)^{n_r}
 q^{\frac12\mathbf n^TB^\vee D^\vee\mathbf n+
       \mathbf n^T\mathbf b^\pm_{r,j}}}
 {(q;q)_{n_1}\cdots(q;q)_{n_{r-1}}(q^2;q^2)_{n_r}}.
 \label{eq:Npm}
\end{equation}
The superscripts \(+\) and \(-\) refer only to the linear terms
\(\mathbf b^+_{r,j}\) and \(\mathbf b^-_{r,j}\); both branches have
the same sign factor \((-1)^{n_r}\). For \(1\leq j\leq r-1\), define
\begingroup\small
\begin{equation}
\begin{aligned}
 \Eeven_{r,j}(q)
 &:={}
 \sum_{N_1\geq\cdots\geq N_{r-1}\geq0}
 \frac{(-1)^{N_{r-1}}q^{\sum\limits_{a=1}^{r-1}N_a^2+\sum\limits_{a=1}^{j-1}N_a+2\sum\limits_{a=j}^{r-1}N_a+\frac12N_{r-1}^2}}
 {\displaystyle\prod_{a=1}^{r-2}(q;q)_{N_a-N_{a+1}}
  (-q^{3/2};q)_{N_{r-1}}(q^2;q^2)_{N_{r-1}}}.
\end{aligned}
 \label{eq:Eeven}
\end{equation}
\endgroup

The following lemma reduces the plus branch by Euler summation
and identifies the resulting nested sum with a Hecke-type double sum.

\begin{lemma}
\label{lem:evenBailey}
For \(r\geq2\) and \(1\leq j\leq r-1\),
\begin{equation*}
 \Eeven_{r,j}(q)
 =\frac{\mathcal N^+_{r,j}(q)}{(-q^{3/2};q)_\infty}
 =\frac{1+q^{1/2}}{(q;q)_\infty}\,\overline f_{r,j}(q),
 \qquad
 \mathcal N^+_{r,j}(q)
 =\frac{\overline f_{r,j}(q)}{J_{1/2,2}},
\end{equation*}
where the common Hecke-type double sum is
\begin{align}
 \overline f_{r,j}(q):={}&
 \sum_{n=0}^\infty q^{rn^2+(2r-j)n}
 \sum_{t=-n-1}^{n}(-1)^tq^{-(3t^2+5t)/4}
 \notag\\*
 &-\sum_{n=0}^\infty q^{rn^2+jn+j-r}
 \sum_{t=-n}^{n-1}(-1)^tq^{-(3t^2+5t)/4}
 \qquad(1\leq j\leq r-1).
 \label{eq:fbar}
\end{align}
\end{lemma}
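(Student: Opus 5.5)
We prove the three equalities in turn. The first is a one-variable Euler summation. The second, the Hecke-type formula, comes from a Bailey chain seeded by \eqref{eq:pair-G5}. The third is a product manipulation.

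\emph{Euler step.} In \eqref{eq:Npm} with $\mathbf b^+_{r,j}$, we use \eqref{eq:BD-dual} and isolate $n_1$. Its contribution is $q^{\frac12n_1^2+n_1n_r+n_1}/(q;q)_{n_1}$, since the first entry of $\mathbf b^+_{r,j}$ is $1$. Euler's formula $\sum_{m\ge0}q^{\binom m2}z^m/(q;q)_m=(-z;q)_\infty$ with $z=q^{n_r+3/2}$ sums this to
\[
(-q^{3/2+n_r};q)_\infty=\frac{(-q^{3/2};q)_\infty}{(-q^{3/2};q)_{n_r}}.
\]
Next we change variables to $N_a=n_{a+1}+\cdots+n_r$, so that $N_1\ge\cdots\ge N_{r-1}=n_r\ge0$ and $n_{a+1}=N_a-N_{a+1}$.

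\begin{itemize}
\item The quadratic form becomes $\sum_{a}N_a^2+\frac12N_{r-1}^2$.
\item The sign becomes $(-1)^{N_{r-1}}$.
\item The denominator becomes $\prod_{a=1}^{r-2}(q;q)_{N_a-N_{a+1}}\,(q^2;q^2)_{N_{r-1}}$, times the extra factor $(-q^{3/2};q)_{N_{r-1}}$ produced by the Euler step.
\item The linear form $\sum_{a\ge1}b_{a+1}(N_a-N_{a+1})$ becomes $\sum_a(b_{a+1}-b_a)N_a$, with $b_1$ read as $0$ for the $N_1$ coefficient. By \eqref{b-+} these differences are $1$ for $a\le j-1$ and $2$ for $a\ge j$.
\end{itemize}

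This is exactly \eqref{eq:Eeven}, so $\mathcal N^+_{r,j}=(-q^{3/2};q)_\infty\Eeven_{r,j}$.

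\emph{Bailey chain.} The innermost factor $(-1)^Nq^{N^2/2}/((-q^{3/2};q)_N(q^2;q^2)_N)$ is the $\beta$-term of \eqref{eq:pair-G5} relative to $q$. We move outward one variable at a time, so that each $N_a$ acquires the weight $q^{N_a^2+2N_a}$ for $a\ge j$ and $q^{N_a^2+N_a}$ for $a<j$.

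\begin{itemize}
\item For a weight $q^{k^2+2k}$, we lower from $q$ to $1$ by Lemma~\ref{lem:bailey-lower}, which contributes $q^k$. We then raise back to $q$ by \eqref{eq:bailey-raise} with $b\to0$, which leaves $\beta$ unchanged. Finally we apply \eqref{eq:bailey-chain} with $a=q$, which contributes $q^{k^2+k}$.
\item For a weight $q^{k^2+k}$, we apply \eqref{eq:bailey-chain} at $a=q$ directly. For the outermost variable we instead lower to $a=1$ and then apply \eqref{eq:bailey-chain} at $a=1$, so that the final pair is relative to $1$.
\end{itemize}

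Taking $n\to\infty$ via \eqref{eq:bailey-limit}, the last factor $1/(q;q)_{n-N_1}$ tends to $1/(q;q)_\infty$. This gives $(q;q)_\infty\Eeven_{r,j}=\sum_n\alpha^{\mathrm{final}}_n$, and Gaussian decay of the $\alpha$'s justifies the limit.

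\emph{Main obstacle.} The main work is tracking $\alpha$ through this chain. Each lowering creates a two-term difference, and each raising creates a partial sum $\sum_{k\le n}q^{-k^2}\alpha_k$. Starting from $\alpha_n=(-1)^nq^{\frac12\binom n2}(1-q^{n+1/2})/(1-q^{1/2})$, these should telescope. The expected outcome is that the final $\alpha_n$ is a quadratic power times a finite theta sum $\sum_t(-1)^tq^{-(3t^2+5t)/4}$ over the windows $[-n-1,n]$ or $[-n,n-1]$. The factor $(1-q^{1/2})^{-1}$ from the \eqref{eq:pair-G5} normalization should produce the prefactor $1+q^{1/2}$ after the Euler-product rewriting.

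I would first verify the pattern at $r=2$, $j=1$ by induction on the number of steps. The induction hypothesis is an explicit form of $\alpha$ after $m$ steps. Splitting $n$ by parity after the final lowering should then separate the two sums in \eqref{eq:fbar}, whose exponents are $rn^2+(2r-j)n$ and $rn^2+jn+j-r$. The shift $j-r$ comes from reindexing the odd-$n$ half.

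\emph{Products.} Since $(1+q^{1/2})(-q^{3/2};q)_\infty=(-q^{1/2};q)_\infty$, the first two equalities give $\mathcal N^+_{r,j}=\overline f_{r,j}\,(-q^{1/2};q)_\infty/(q;q)_\infty$. Moreover $(-q^{1/2};q)_\infty=(q;q^2)_\infty/(q^{1/2};q)_\infty$, and $(q;q)_\infty=(q;q^2)_\infty(q^2;q^2)_\infty$. Hence
\[
\frac{(q;q)_\infty}{(-q^{1/2};q)_\infty}=(q^{1/2};q)_\infty(q^2;q^2)_\infty=(q^{1/2},q^{3/2},q^2;q^2)_\infty=J_{1/2,2},
\]
which is the third equality.
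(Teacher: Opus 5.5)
Your Euler reduction and your closing product identity are correct and agree with the paper. The gap is the middle equality $\Eeven_{r,j}=(1+q^{1/2})\overline f_{r,j}/(q;q)_\infty$, which is the real content of the lemma. You never compute the $\alpha$-sequence, and the mechanism you anticipate is not what happens.

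\begin{itemize}
\item The $\alpha$-sequence of \eqref{eq:pair-G5} is given by one formula for all $n$, and no parity split occurs anywhere.
\item The two sums in \eqref{eq:fbar} are the two terms $q^n\alpha_n/(1-aq^{2n})$ and $q^{n-1}\alpha_{n-1}/(1-aq^{2n-2})$ of a single application of Lemma~\ref{lem:bailey-lower}.
\item The constant $j-r$ comes from the shift $n\mapsto n-1$ in the second term, followed by $j$ chain steps at $q$: $(r-j)(n-1)^2+2(r-j)(n-1)+j(n^2+n)=rn^2+jn+j-r$.
\item Your ordering makes the bookkeeping harder than necessary. Lowering \eqref{eq:pair-G5} first produces $\alpha_n/(1-q^{2n+1})=(-1)^nq^{\frac12\binom n2}/\bigl((1-q^{1/2})(1+q^{n+1/2})\bigr)$. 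This is not of theta type until the next raising clears the denominator, and the detour repeats for each of the $r-j$ variables of weight $q^{N^2+2N}$.
\item For $j=1$, your rules end with a pair relative to $q$, not $1$. In that case $(q;q)_\infty\Eeven_{r,j}=\sum_n\alpha^{\mathrm{final}}_n$ is the wrong limit relation.
\end{itemize}

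The paper's order avoids all of this.
\begin{itemize}
\item Raise \eqref{eq:pair-G5} once, from $q$ to $q^2$, by \eqref{eq:bailey-raise}. The partial sum $\sum_{k\le n}q^{-k^2-k}\alpha_k$ splits $1-q^{k+1/2}$ into the $t=k$ and $t=-k-1$ terms of $\sum_{t=-n-1}^{n}(-1)^tq^{-(3t^2+5t)/4}$. This is where the finite theta sum comes from.
\item Apply $r-j-1$ chain steps \eqref{eq:bailey-chain} relative to $q^2$. Each multiplies $\alpha_n$ by $q^{n^2+2n}$ and supplies a weight $q^{N^2+2N}$ directly.
\item Lower once to $q$. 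The factor $1-q^{2n+2}$ produced by the raising cancels the lowering denominators.
\item Apply $j$ chain steps at $q$. The final pair is then relative to $q$ for every $j$.
\item Use \eqref{eq:bailey-limit} together with $(q^2;q)_\infty=(q;q)_\infty/(1-q)$. This gives exactly the factor $(1-q)/(1-q^{1/2})=1+q^{1/2}$.
\end{itemize}

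Your per-variable route is not wrong in principle. Once all variables of weight $q^{N^2+2N}$ have been summed, your pair has the same $\beta$ relative to $q$ as the paper's, hence the same $\alpha$ by Bailey inversion. But as written, your proposal does not establish the Hecke-type identity.
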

\begin{proof}
For \(r\geq2\) and \(1\leq j\leq r-1\), the tail sums are
\[
 N_a:=n_{a+1}+\cdots+n_r\qquad(1\leq a\leq r-1).
\]
Then \(N_1\geq\cdots\geq N_{r-1}\geq0\), with
\(n_{a+1}=N_a-N_{a+1}\) for \(1\leq a\leq r-2\) and
\(n_r=N_{r-1}\).  The entries of \(B^\vee D^\vee\) and
\(\mathbf b^+_{r,j}\) give
\begin{align*}
 \frac12\mathbf n^TB^\vee D^\vee\mathbf n
 +\mathbf n^T\mathbf b^+_{r,j}
 ={}&\frac12n_1^2+n_1N_{r-1}+n_1
 +\sum_{a=1}^{r-1}N_a^2+\frac12N_{r-1}^2+\sum_{a=1}^{j-1}N_a+2\sum_{a=j}^{r-1}N_a.
\end{align*}
Euler's identity
\[
 \sum_{n\geq0}\frac{q^{\binom n2}z^n}{(q;q)_n}
 =(-z;q)_\infty
\]
evaluates the sum over \(n_1\) as
\((-q^{N_{r-1}+3/2};q)_\infty\).  Since
\[
 (-q^{N_{r-1}+3/2};q)_\infty
 =\frac{(-q^{3/2};q)_\infty}{(-q^{3/2};q)_{N_{r-1}}},
\]
we obtain
\begin{equation}
 \mathcal N^+_{r,j}(q)=(-q^{3/2};q)_\infty\Eeven_{r,j}(q).
 \label{eq:even-euler-reduction}
\end{equation}

To evaluate \(\Eeven_{r,j}\), apply \eqref{eq:bailey-raise} with
\(a=q\) to the pair \eqref{eq:pair-G5}.  This gives the
following pair relative to \((q^2,q)\):
\begin{align}
 \alpha_n^{(0)}(q^2;q)
 &=\frac{(1-q^{2n+2})q^{n(n+1)}}
 {(1-q^2)(1-q^{1/2})}
 \sum_{t=-n-1}^{n}(-1)^tq^{-(3t^2+5t)/4},\notag\\
 \beta_n^{(0)}(q^2;q)
 &=\frac{(-1)^nq^{n^2/2}}
 {(-q^{3/2};q)_n(q^2;q^2)_n}.
 \label{eq:even-raised-pair}
\end{align}
In particular, \(\alpha_0^{(0)}=\beta_0^{(0)}=1\).
Since \(r-j\geq1\), applying
\eqref{eq:bailey-chain}, with relative parameter \(q^2\),
\(r-j-1\) times yields
\begin{equation}
 \alpha_n^{(r-j-1)}(q^2;q)
 =\frac{(1-q^{2n+2})q^{(r-j)n^2+(2r-2j-1)n}}
 {(1-q^2)(1-q^{1/2})}
 \sum_{t=-n-1}^{n}(-1)^tq^{-(3t^2+5t)/4}.
 \label{eq:even-chained-pair}
\end{equation}
Its \(\beta\)-sequence is
\begin{equation*}
 \beta_n^{(r-j-1)}(q^2;q)
 =\sum_{n=N_j\geq N_{j+1}\geq\cdots\geq N_{r-1}\geq0}
 \frac{(-1)^{N_{r-1}}
 q^{\sum_{a=j+1}^{r-1}(N_a^2+2N_a)+N_{r-1}^2/2}}
 {\displaystyle\prod_{a=j}^{r-2}(q;q)_{N_a-N_{a+1}}
 (-q^{3/2};q)_{N_{r-1}}(q^2;q^2)_{N_{r-1}}}.
\end{equation*}
The index \(N_j=n\) is fixed. If \(j=r-1\), no chain step has
been applied, so there are no free summation indices and the formula
reduces to \(\beta_n^{(0)}(q^2;q)\).

Applying \eqref{eq:bailey-lower} with \(a=q^2\) now gives a pair
relative to \((q,q)\), with
\begin{align}
 \alpha_n^{(r-j)}(q;q)
 =\frac1{1-q^{1/2}}\Bigg(&q^{(r-j)n^2+(2r-2j)n}
 \sum_{t=-n-1}^{n}(-1)^tq^{-(3t^2+5t)/4}\notag\\
 &-q^{(r-j)(n-1)^2+(2r-2j)(n-1)}
 \sum_{t=-n}^{n-1}(-1)^tq^{-(3t^2+5t)/4}\Bigg),
 \label{eq:even-lowered-pair}
\end{align}
with \(\beta_n^{(r-j)}(q;q)=q^n\beta_n^{(r-j-1)}(q^2;q)\).
The second finite sum is empty at \(n=0\).
Finally, applying \eqref{eq:bailey-chain}, now with relative
parameter \(q\), \(j\) times gives
\begin{align}
 \alpha_n^{(r)}(q;q)
 =\frac1{1-q^{1/2}}\Bigg(&q^{rn^2+(2r-j)n}
 \sum_{t=-n-1}^{n}(-1)^tq^{-(3t^2+5t)/4}\notag\\
 &-q^{rn^2+jn+j-r}
 \sum_{t=-n}^{n-1}(-1)^tq^{-(3t^2+5t)/4}\Bigg),
 \label{eq:even-final-alpha}
\end{align}
and
\begin{equation*}
 \beta_n^{(r)}(q;q)
 =\sum_{n\geq N_1\geq\cdots\geq N_{r-1}\geq0}
 \frac{(-1)^{N_{r-1}}
 q^{\sum_{a=1}^{r-1}N_a^2+\sum_{a=1}^{j-1}N_a
 +2\sum_{a=j}^{r-1}N_a+N_{r-1}^2/2}}
 {\displaystyle(q;q)_{n-N_1}
 \prod_{a=1}^{r-2}(q;q)_{N_a-N_{a+1}}
 (-q^{3/2};q)_{N_{r-1}}(q^2;q^2)_{N_{r-1}}}.
\end{equation*}
The superscripts label the successive stages of this construction.

The sum of the final \(\alpha\)-sequence is
\(\overline f_{r,j}(q)/(1-q^{1/2})\), by \eqref{eq:fbar}.
For \(r\geq2\), the quadratic exponents give absolute convergence
of this sum and of the limiting nested \(\beta\)-sum. Applying
\eqref{eq:bailey-limit} with relative parameter \(q\) therefore gives
\begin{equation}
 \frac{\Eeven_{r,j}(q)}{(q;q)_\infty}
 =\frac{\overline f_{r,j}(q)}
 {(1-q^{1/2})(q;q)_\infty(q^2;q)_\infty},
 \label{eq:even-final-limit}
\end{equation}
or equivalently,
\[
 \Eeven_{r,j}(q)
 =\frac{1+q^{1/2}}{(q;q)_\infty}\,\overline f_{r,j}(q).
\]
Combining this with \eqref{eq:even-euler-reduction} and
\[
 \frac{(q;q)_\infty}
 {(1+q^{1/2})(-q^{3/2};q)_\infty}=J_{1/2,2}
\]
proves the lemma.
\end{proof}

\section{The theta vector and its completion}
\label{sec:theta-proofs}

We now compare the Hecke-type sums with the shifted lattice,
prove the first two theorems, and then determine the modular
transformations and the nonholomorphic correction. For these
arguments it is convenient to use the invariant lattice notation.

The quadratic and bilinear forms are, in the notation of
\cite[Section~2.2]{Zwegers2002},
\begin{equation*}
 A_r:=\begin{pmatrix}4r&0\\0&-3\end{pmatrix},\qquad
 Q_r(\nu):=\frac12\nu^TA_r\nu,\qquad
 B_r(\mu,\nu):=\mu^TA_r\nu,
\end{equation*}
where \(\mu,\nu\in\R^2\) are column vectors. With
\begin{equation*}
 \gamma_r:=\gcd(3,4r)=\gcd(3,r),
\end{equation*}
the primitive negative vectors and the elliptic characteristic are
\begin{equation*}
 c_+:=\frac1{\gamma_r}\binom{3}{4r},\qquad
 c_-:=\frac1{\gamma_r}\binom{-3}{4r},\qquad
 b:=\binom0{1/6}.
\end{equation*}
For \(1\leq\ell\leq2r-1\), let
\begin{equation}
 \ve_\ell:=(-1)^{\ell+1},\qquad
 s_\ell:=\ve_\ell\ell,
 \qquad
 a_\ell:=\binom{s_\ell/(4r)}{-1/6}.
 \label{eq:asl}
\end{equation}
Thus \((s_1,s_2,s_3,\ldots)=(1,-2,3,-4,\ldots)\).

In this notation, the series in Definition~\ref{def:fhat} has the form
\begin{equation}
 \widehat f_\ell^{(r)}(\tau)
 =\sum_{\nu\in a_\ell+\Z^2}
 \bigl\{\sgnstd(B_r(c_+,\nu))-\sgnstd(B_r(c_-,\nu))\bigr\}
 e^{2\pi iQ_r(\nu)\tau+2\pi iB_r(\nu,b)}.
 \label{eq:fdef}
\end{equation}
The sign kernel vanishes unless \(\lvert\nu_2\rvert\leq
\lvert\nu_1\rvert\) with $\nu=\binom{\nu_1}{\nu_2}$. On this support,
\[
 Q_r(\nu)=2r\nu_1^2-\frac32\nu_2^2
 \geq\frac{4r-3}{4}(\nu_1^2+\nu_2^2).
\]
The series therefore converges absolutely and locally uniformly on
\(\Hh\), so \(\widehat f_\ell^{(r)}\) is holomorphic.
The equality with the scalar expression in Definition~\ref{def:fhat}
follows from
$B_r(c_+,\nu)=12r(\nu_1-\nu_2)/\gamma_r$,
$B_r(c_-,\nu)=-12r(\nu_1+\nu_2)/\gamma_r$, and
$2\pi iB_r(\nu,b)=-\pi i\nu_2$.

\subsection{The even components}
\label{subsec:even-components}

For $1\leq j\leq r-1$, use the abbreviations
\begin{align*}
 \kappa^{\mathrm e}_{r,j}
 &=\frac{(2r-j)^2}{2r}-\frac{25}{24},&
 \lambda^{\mathrm e}_{r,j}
 &=\frac{j^2}{2r}-\frac1{24},
\end{align*}
and denote the even boundary term by
\begin{equation}
 \Theta^{\mathrm e}_{r,j}(\tau)=
 \begin{cases}
 0,&3j<r,\\
 \zeta_{12}^{-5}q^{\lambda^{\mathrm e}_{r,j}}
 J_{2r-2j-1,4r-3},&3j=r,\\
 2\zeta_{12}^{-5}q^{\lambda^{\mathrm e}_{r,j}}
 J_{2r-2j-1,4r-3},&3j>r.
 \end{cases}
 \label{eq:eventheta}
\end{equation}

\begin{proposition}
\label{prop:even-theta}
For $r\geq2$ and $1\leq j\leq r-1$,
\begin{align}
 \widehat f_{2j}^{(r)}(\tau)
 &=2\zeta_{12}^{-5}q^{\kappa^{\mathrm e}_{r,j}}
   J_{1,4}\mathcal N^+_{r,j}(q^2)
   +\Theta^{\mathrm e}_{r,j}(\tau)\notag\\
 &=2\zeta_{12}^{-5}q^{\kappa^{\mathrm e}_{r,j}}
   \frac{(q^2;q^2)_\infty}{1+q}\Eeven_{r,j}(q^2)
   +\Theta^{\mathrm e}_{r,j}(\tau).
 \label{eq:evenmain}
\end{align}
\end{proposition}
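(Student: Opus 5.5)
By Lemma~\ref{lem:evenBailey}, $\mathcal N^+_{r,j}(q^2)=\overline f_{r,j}(q^2)/J_{1,4}$. The factor $J_{1,4}$ therefore cancels in the first line of \eqref{eq:evenmain}, and it suffices to prove
\[
\widehat f_{2j}^{(r)}(\tau)=2\zeta_{12}^{-5}q^{\kappa^{\mathrm e}_{r,j}}\overline f_{r,j}(q^2)+\Theta^{\mathrm e}_{r,j}(\tau).
\]
The second line of \eqref{eq:evenmain} then follows from the same lemma, since $\mathcal N^+_{r,j}(q^2)=(-q^3;q^2)_\infty\Eeven_{r,j}(q^2)$ and $J_{1,4}(-q^3;q^2)_\infty=(q^2;q^2)_\infty/(1+q)$. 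The plan is to rewrite the two finite-$t$ sums in \eqref{eq:fbar} as sums over a shifted lattice restricted to a cone, and then compare that cone with the sign kernel in \eqref{eq:fdef}.

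First I change variables. For $\ell=2j$ we have $s_\ell=-2j$, so $u\in\Z-j/(2r)$ and $v\in\Z-1/6$. In the first piece of \eqref{eq:fbar}, set $u=n+1-j/(2r)$ (or the variant with $-u$) and $v=t+5/6$ or a similar shift chosen so that $3v^2/2$ reproduces $\tfrac34(3t^2+5t)$ after $q\mapsto q^2$. Completing squares should then give
\[
2\bigl(rn^2+(2r-j)n\bigr)-\tfrac32(3t^2+5t)/\ldots=2ru^2-\tfrac32v^2-\kappa^{\mathrm e}_{r,j}.
\]
The sign $(-1)^t$ should match $e^{-\pi iv}$ up to the constant $\zeta_{12}^{-5}$. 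The second piece of \eqref{eq:fbar}, with $u=-(n+j/(2r))$ or similar, should land in the same coset with the opposite sign of $u$. The offset $j-r$ should be absorbed by the same normalization; I would verify this explicitly by checking that both exponents equal $2ru^2-3v^2/2$ minus the same constant. Under this substitution, the range $-n-1\le t\le n$ becomes a condition of the form $|v|<|u|$ roughly, with $u>0$ for the first piece and $u<0$ for the second. The kernel $\sgnstd(u-v)+\sgnstd(u+v)$ equals $2\,\sgnstd(u)$ on $|v|<|u|$, which accounts for the factor $2$. The symmetry $(u,v)\mapsto(-u,-v)$ flips the coset to $v\in\Z+1/6$, and I would handle it through the character $e^{-\pi iv}$ together with the fact that the parity of the $t$-range is asymmetric ($-n-1$ to $n$ versus $-n$ to $n-1$).

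The main obstacle is the boundary comparison. The shifted integer ranges in $t$ are not exactly the open cone $|v|<|u|$. The discrepancy consists of lattice points near the lines $v=\pm u$, namely points with $|u|-1\lesssim |v|\le|u|$ that one description includes and the other excludes, together with points lying exactly on $u=\pm v$ where the kernel takes the value $1$ rather than $2$. I would parametrize these points along each boundary line, e.g.\ $v=u-c$ for a fixed small shift $c$. On such a line the indefinite form restricts to a positive definite unary form in $u$ with discriminant governed by $4r-3$, so each missing family is a unary theta series. Jacobi triple product should then evaluate it as $q^{\lambda^{\mathrm e}_{r,j}}J_{2r-2j-1,4r-3}$. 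Whether such points exist is decided by comparing $j/(2r)$ with $1/6$, that is, by the sign of $3j-r$. For $3j<r$ the cones agree and there is no correction. For $3j=r$ the coset meets the boundary line exactly, and $\sgnstd(0)=0$ gives half weight. For $3j>r$ one full extra line appears with multiplicity $2$. I would carry out this line-by-line bookkeeping first in the case $r=2$, $j=1$ to fix the constants, then run it for general $(r,j)$. Absolute convergence from the bound $Q_r\ge\frac{4r-3}{4}|\nu|^2$ on the support justifies all the rearrangements.
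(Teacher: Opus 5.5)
Your plan is correct and is essentially the paper's proof: after cancelling $J_{1,4}$ via Lemma~\ref{lem:evenBailey}, your substitutions $u=n+1-j/(2r)$, $v=t+5/6$ and $u=-(n+j/(2r))$, $v=t+5/6$ are exactly the paper's $(X,Y)=(n,t)$ and $(X,Y)=(-n-1,t)$ with representative $((2r-j)/(2r),5/6)^T$. Both pieces already land in the correct coset, so the $(u,v)\mapsto(-u,-v)$ detour is unnecessary. The bookkeeping you defer comes out cleanly: the $u+v$ sign functions agree outright because $0<5/6-j/(2r)<1$, so the only discrepancy lies on the single full line $X=Y$ (that is, $u-v=1/6-j/(2r)$), where the kernel minus the cone weight is $-1$ if $3j=r$ and $-2$ if $3j>r$; that line sums to $J_{6r-2j-4,4r-3}=-q^{-(2r-2j-1)}J_{2r-2j-1,4r-3}$, which gives $\Theta^{\mathrm e}_{r,j}$.
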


\begin{proof}
For \(1\leq j\leq r-1\), a representative of the characteristic
in \eqref{eq:fdef} is
\begin{equation*}
 a_{2j}\equiv
 \binom{(2r-j)/(2r)}{5/6}\pmod{\Z^2}
\end{equation*}
and every point of the shifted lattice is obtained by adding
\((X,Y)^T\), with \(X,Y\in\Z\), to this representative.
Substitution into \eqref{eq:fdef} gives
\begin{align}
 \widehat f_{2j}^{(r)}
 ={}&\zeta_{12}^{-5}q^{\kappa^{\mathrm e}_{r,j}}
 \sum_{X,Y\in\Z}
 \left\{
 \sgnstd\!\left(X-Y+\frac16-\frac{j}{2r}\right)
 \right.\notag\\[-2pt]
 &\left.\hspace{31mm}+
 \sgnstd\!\left(X+Y+1+\frac56-\frac{j}{2r}\right)
 \right\}
 (-1)^Yq^{2rX^2+(4r-2j)X-(3Y^2+5Y)/2}.
 \label{eq:evenraw}
\end{align}
The substitutions \((X,Y)=(n,t)\) in the first sum of
\eqref{eq:fbar} and \((X,Y)=(-n-1,t)\) in the second give
\begin{align}
 2\overline f_{r,j}(q^2)
 ={}&\sum_{X,Y\in\Z}
 \{\sgnstd(X-Y+\tfrac12)+\sgnstd(X+Y+\tfrac32)\}
 \notag\\[-2pt]
 &\hspace{25mm}\times
 (-1)^Yq^{2rX^2+(4r-2j)X-(3Y^2+5Y)/2}.
 \label{eq:evenfbarcone}
\end{align}
The second sign functions in \eqref{eq:evenraw} and
\eqref{eq:evenfbarcone} agree because \(0<5/6-j/(2r)<1\).
For the first sign functions, the comparison depends on the sign of
\(1/6-j/(2r)\). If \(3j<r\), they agree. If \(3j=r\), their
difference is \(-1\) on \(X=Y\) and zero elsewhere. If \(3j>r\),
the difference is \(-2\) on \(X=Y\) and zero elsewhere, since
\(-1/3<1/6-j/(2r)<0\). Thus, on setting
\[
 m_{r,j}:=\begin{cases}0,&3j<r,\\1,&3j=r,\\2,&3j>r,\end{cases}
\]
the difference of the first sign functions is
\(-m_{r,j}\1_{\{X=Y\}}\).
The corresponding boundary series is
\begin{align}
 \sum_{X\in\Z}(-1)^X
 q^{\{(4r-3)X^2+(8r-4j-5)X\}/2}
 &=J_{6r-2j-4,\,4r-3}
 \notag\\
 &=-q^{-(2r-2j-1)}J_{2r-2j-1,\,4r-3}.
 \label{eq:evenboundary}
\end{align}
For \(a\in\R\), the elementary product relation
\(j(q^{4r-3+a};q^{4r-3})=-q^{-a}j(q^a;q^{4r-3})\)
gives the second line.  Since
\begin{equation*}
 \kappa^{\mathrm e}_{r,j}-(2r-2j-1)
 =\lambda^{\mathrm e}_{r,j},
\end{equation*}
equations \eqref{eq:evenraw}--\eqref{eq:evenboundary} give
\begin{equation}
 \widehat f_{2j}^{(r)}
 =2\zeta_{12}^{-5}q^{\kappa^{\mathrm e}_{r,j}}
 \overline f_{r,j}(q^2)+\Theta^{\mathrm e}_{r,j}.
 \label{eq:even-theta-comparison}
\end{equation}
The boundary contribution is therefore exactly the one prescribed in
\eqref{eq:eventheta}. Lemma~\ref{lem:evenBailey}, with \(q\) replaced
by \(q^2\), now gives \eqref{eq:evenmain}.
\end{proof}

\subsection{The two signed Nahm branches}
\label{subsec:signed-branches}

\begin{proof}[Proof of Theorem~\ref{thm:nahm-fbar-hat}]
The even-component identity follows from
Proposition~\ref{prop:even-theta} after expanding the quadratic
and linear terms. To prove the product cancellation, we evaluate
the second signed branch using the same tail sums
\[
 N_a:=n_{a+1}+\cdots+n_r\qquad(1\leq a\leq r-1).
\]
Thus \(N_1\geq\cdots\geq N_{r-1}\geq0\) and
\[
 n_{a+1}=N_a-N_{a+1}\quad(1\leq a\leq r-2),
 \qquad n_r=N_{r-1}.
\]
Expanding the quadratic form and the two linear terms gives
\begin{align*}
 \frac12\mathbf n^TB^\vee D^\vee\mathbf n
 +\mathbf n^T\mathbf b^+_{r,j}
 ={}&\frac12n_1^2+n_1N_{r-1}+n_1
 +\sum_{a=1}^{r-1}N_a^2+\frac12N_{r-1}^2
 +\sum_{a=1}^{j-1}N_a+2\sum_{a=j}^{r-1}N_a,\\
 \frac12\mathbf n^TB^\vee D^\vee\mathbf n
 +\mathbf n^T\mathbf b^-_{r,j}
 ={}&\frac12n_1^2+n_1N_{r-1}
 +\sum_{a=1}^{r-1}N_a^2+\frac12N_{r-1}^2
 +\sum_{a=1}^{j-1}N_a+2\sum_{a=j}^{r-1}N_a-N_{r-1}.
\end{align*}
The first expansion is the one used in
\eqref{eq:even-euler-reduction}. The second leads to the auxiliary
series
\begin{equation*}
 \mathcal E^-_{r,j}(q):=
 \sum_{N_1\geq\cdots\geq N_{r-1}\geq0}
 \frac{(-1)^{N_{r-1}}q^{\sum\limits_{a=1}^{r-1}N_a^2+\frac12N_{r-1}^2+\sum\limits_{a=1}^{j-1}N_a+2\sum\limits_{a=j}^{r-1}N_a-N_{r-1}}}
 {\displaystyle
  \prod_{a=1}^{r-2}(q;q)_{N_a-N_{a+1}}
  (-q^{1/2};q)_{N_{r-1}}(q^2;q^2)_{N_{r-1}}}.
\end{equation*}
Euler's identity with \(z=q^{N_{r-1}+1/2}\) evaluates the
\(n_1\)-sum and gives
\[
 \mathcal N^-_{r,j}(q)=(-q^{1/2};q)_\infty
 \mathcal E^-_{r,j}(q).
\]
We will also use the product identity
\[
 \frac{(-q^{1/2};q)_\infty}{(q;q)_\infty}
 =\frac1{J_{1/2,\,2}}.
\]

To evaluate \(\mathcal E^-_{r,j}\), use the transformation sequence
from Lemma~\ref{lem:evenBailey}, but start with
\eqref{eq:pair-G4-starstar}. The parameter-raising transformation
\eqref{eq:bailey-raise} gives the pair relative to \((q^2,q)\)
\begin{equation}
\begin{aligned}
 \alpha_n^{(0)}(q^2;q)
 &=\frac{(1-q^{2n+2})q^{n(n+1)}}{(1-q^2)(1-q)}
   \sum_{t=-n-1}^{n}(-1)^tq^{-(3t^2+7t)/4},\\
 \beta_n^{(0)}(q^2;q)
 &=\frac{(-1)^nq^{n(n-2)/2}}
 {(-q^{1/2};q)_n(q^2;q^2)_n}.
\end{aligned}
 \label{eq:minus-raised-pair}
\end{equation}
Apply \eqref{eq:bailey-chain} \(r-j-1\) times relative to
\((q^2,q)\), then apply the parameter-lowering transformation
\eqref{eq:bailey-lower} with \(a=q^2\), and finally apply
\eqref{eq:bailey-chain} \(j\) times relative to \((q,q)\).
The resulting pair relative to \((q,q)\) is
\begin{align*}
 \alpha_n^{(r)}(q;q)
 =\frac1{1-q}\Bigg(&
 q^{rn^2+(2r-j)n}
 \sum_{t=-n-1}^{n}(-1)^tq^{-(3t^2+7t)/4}\\[-2pt]
 &\quad\quad\quad\quad-q^{rn^2+jn+j-r}
 \sum_{t=-n}^{n-1}(-1)^tq^{-(3t^2+7t)/4}\Bigg)
\end{align*}
and
\begin{align*}
 \beta_n^{(r)}(q;q)
 ={}&\sum_{n\geq N_1\geq\cdots\geq N_{r-1}\geq0}
 \frac{(-1)^{N_{r-1}}q^{\sum_{a=1}^{r-1}N_a^2+\frac12N_{r-1}^2+\sum_{a=1}^{j-1}N_a+2\sum_{a=j}^{r-1}N_a-N_{r-1}}}
 {\displaystyle
  (q;q)_{n-N_1}\prod_{a=1}^{r-2}(q;q)_{N_a-N_{a+1}}
  (-q^{1/2};q)_{N_{r-1}}(q^2;q^2)_{N_{r-1}}}.
\end{align*}
Define the auxiliary double sum
\begin{align*}
 \overline f^{[7]}_{r,j}(q):={}&
 \sum_{n=0}^{\infty}q^{rn^2+(2r-j)n}
 \sum_{t=-n-1}^{n}(-1)^tq^{-(3t^2+7t)/4}\\[-2pt]
 &-\sum_{n=0}^{\infty}q^{rn^2+jn+j-r}
 \sum_{t=-n}^{n-1}(-1)^tq^{-(3t^2+7t)/4}.
\end{align*}
The resulting series have Gaussian decay, so
\eqref{eq:bailey-limit} applies. Since
\((q^2;q)_\infty=(q;q)_\infty/(1-q)\), we obtain
\[
 \mathcal E^-_{r,j}(q)
 =\frac{\overline f^{[7]}_{r,j}(q)}{(q;q)_\infty},
 \qquad
 \mathcal N^-_{r,j}(q)
 =\frac{\overline f^{[7]}_{r,j}(q)}{J_{1/2,\,2}}.
\]

It remains to compare \(\overline f^{[7]}_{r,j}\) with
\(\overline f_{r,j}\). Their difference reduces to endpoint terms
in the finite inner sums. Set
\[
 a_t:=(-1)^tq^{-(3t^2+7t)/4}\qquad(t\in\Z).
\]
The elementary identity
\[
 q^{1/2}(-1)^tq^{-(3t^2+5t)/4}=a_{-t-2}
\]
implies, for every \(n\geq0\),
\begin{align*}
 \sum_{t=-n-1}^{n}(a_t-a_{-t-2})&=a_n-a_{-n-2},\\
 \sum_{t=-n}^{n-1}(a_t-a_{-t-2})&=a_{n-1}-a_{-n-1}.
\end{align*}
Substituting these endpoint terms and combining the ranges
\(k\leq0\) and \(k\geq1\) into bilateral sums, we obtain
\begin{align*}
 \overline f^{[7]}_{r,j}(q)-q^{1/2}\overline f_{r,j}(q)
 ={}&q^{j-r+1}\sum_{k\in\Z}(-1)^k
 \left(q^{(r-3/4)k^2+(1/4-j)k}
       -q^{(r-3/4)k^2-(j+1/4)k}\right)\\
 ={}&q^{j-r+1}
 \left(J_{r-j-1/2,\,2r-3/2}
       -J_{r-j-1,\,2r-3/2}\right),
\end{align*}
where the last equality follows from Jacobi's triple product identity.  Hence
\[
 \mathcal N^-_{r,j}(q)
 =\frac{q^{1/2}\overline f_{r,j}(q)
 +q^{j-r+1}
  \left(J_{r-j-1/2,\,2r-3/2}
       -J_{r-j-1,\,2r-3/2}\right)}{J_{1/2,\,2}}.
\]

The independently established cone comparison
\eqref{eq:even-theta-comparison} gives
\[
 \overline f_{r,j}(q^2)=
 \frac{\zeta_{12}^{5}}2q^{-\kappa^{\mathrm e}_{r,j}}
 \left(\widehat f_{2j}^{(r)}(\tau)
       -\Theta^{\mathrm e}_{r,j}(\tau)\right).
\]
Substituting $q^2$ for $q$ in the two branch evaluations gives
\begin{align*}
 \mathcal N^+_{r,j}(q^2)
 &=\frac{\zeta_{12}^{5}}{2J_{1,4}}
   q^{-\kappa^{\mathrm e}_{r,j}}
   \bigl(\widehat f_{2j}^{(r)}-\Theta^{\mathrm e}_{r,j}\bigr),\\
 \mathcal N^-_{r,j}(q^2)
 &=\frac{\zeta_{12}^{5}}{2J_{1,4}}
   q^{1-\kappa^{\mathrm e}_{r,j}}
   \bigl(\widehat f_{2j}^{(r)}-\Theta^{\mathrm e}_{r,j}\bigr)\\
 &\quad+\frac{q^{2j-2r+2}}{J_{1,4}}
   \bigl(J_{2r-2j-1,4r-3}-J_{2r-2j-2,4r-3}\bigr).
\end{align*}
Subtracting $q\mathcal N^+_{r,j}(q^2)$ from
\(\mathcal N^-_{r,j}(q^2)\) then gives \eqref{eq:sub-eta-Bdual}.
\end{proof}

\subsection{The odd components}
\label{subsec:odd-components}

For $1\leq j\leq r$, define
\begin{align*}
 \kappa^{\mathrm o}_{r,j}
 &=\frac{(4r-2j+1)^2}{8r}-\frac{25}{24},&
 \lambda^{\mathrm o}_{r,j}
 &=\frac{(2j-1)^2}{8r}-\frac1{24},
\end{align*}
and set
\begin{equation}
 \Theta^{\mathrm o}_{r,j}(\tau)=
 \begin{cases}
 0,&3j\leq r+1,\\
 -2\zeta_{12}^{-5}q^{\lambda^{\mathrm o}_{r,j}}
 J_{2r-2j,4r-3},&3j\geq r+2.
 \end{cases}
 \label{eq:oddtheta}
\end{equation}
The cone comparison below relates the sign-kernel series to the
Bailey sum already evaluated in Lemma~\ref{lem:oddBailey}.

The indicator $\1_{\mathcal P}$ is $1$ when $\mathcal P$ is true
and $0$ otherwise. For $n,t\in\Z$, define the cone indicator
\begin{equation}
 \epsilon(n,t):=
 \1_{\{n-t\geq0,\ n+t+1\geq0\}}
 -\1_{\{n-t<0,\ n+t+1<0\}}.
 \label{eq:coneindicator}
\end{equation}
For an integer \(L_0\), define the cone sum
\begin{equation}
 \Cone_{r,L_0}(q):=
 \sum_{n,t\in\Z}\epsilon(n,t)(-1)^t
 q^{2rn^2+L_0n-(3t^2+5t)/2}.
 \label{eq:conesum}
\end{equation}
On the support of \(\epsilon\), one has \(|t|\leq |n|+1\).
Since \(r\geq2\), the quadratic part therefore dominates the
linear terms, and \(\Cone_{r,L_0}(q)\) converges absolutely.

\begin{lemma}
\label{lem:oddcone}
For \(1\leq j\leq r\), define
\begin{equation*}
 c^{\mathrm o}_{r,j}:=
 \frac{(2j-1)^2}{8r}-\frac{25}{24},
 \qquad
 \chi_{r,j}:=\1_{\{3j\geq r+2\}}.
\end{equation*}
Then
\begingroup
\small
\begin{equation}
 \widehat f_{2j-1}^{(r)}(\tau)
 =2\zeta_{12}^{-5}q^{c^{\mathrm o}_{r,j}}
 \Bigl(q^{2r+2j-1}\Cone_{r,4r+2j-1}(q)
 -\chi_{r,j}qJ_{2r-2j,\,4r-3}\Bigr).
 \label{eq:oddconeformula}
\end{equation}
\endgroup
Moreover,
\begin{equation}
 \Cone_{r,4r+2j-1}(q)
 =q^{2-2r-2j}S_{r,j}(q).
 \label{eq:coneS}
\end{equation}
\end{lemma}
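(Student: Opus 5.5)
The plan is to mirror the proof of Proposition~\ref{prop:even-theta}: first rewrite $\widehat f_{2j-1}^{(r)}$ as a sum over $\Z^2$ with shifted sign kernels, then compare that kernel with the cone indicator $\epsilon(n,t)$.

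\textbf{Step 1: reduce to a sum over $\Z^2$.} For $\ell=2j-1$ we have $s_\ell=2j-1>0$. Choose the representative $\nu=\bigl(X+\tfrac{2j-1}{4r},\,Y+\tfrac56\bigr)^T$ with $X,Y\in\Z$. Since $e^{-\pi i\nu_2}=\zeta_{12}^{-5}(-1)^Y$, substituting into \eqref{eq:fdef} gives
\[
 2rX^2+(2j-1)X+\frac{(2j-1)^2}{8r}-\frac{3Y^2+5Y}{2}-\frac{25}{24}.
\]
The constant term here is exactly $c^{\mathrm o}_{r,j}$. The kernel is
\[
 \sgnstd\!\Bigl(X-Y+\tfrac{2j-1}{4r}-\tfrac56\Bigr)+\sgnstd\!\Bigl(X+Y+\tfrac{2j-1}{4r}+\tfrac56\Bigr).
\]

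\textbf{Step 2: match with the cone sum.} Substitute $X=n+1$ and $Y=t$. Then
\[
 2r(n+1)^2+(2j-1)(n+1)=2rn^2+(4r+2j-1)n+2r+2j-1,
\]
which produces the prefactor $q^{2r+2j-1}$ and the linear coefficient $L_0=4r+2j-1$. The second sign becomes $\sgnstd(n+t+1+\delta)$ with $\delta=\tfrac{2j-1}{4r}+\tfrac56\in(\tfrac56,\tfrac56+\tfrac12)$. Because $n+t+1$ is an integer, this equals $1$ when $n+t+1\geq0$ and $-1$ otherwise. The first sign becomes $\sgnstd(n-t+\eta)$ with $\eta=\tfrac16+\tfrac{2j-1}{4r}\in(\tfrac16,\tfrac23)$, which also equals $\pm1$ according to whether $n-t\geq0$. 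Hence
\[
 \tfrac12\bigl(\sgnstd(\cdot)+\sgnstd(\cdot)\bigr)=\epsilon(n,t)
\]
exactly, so with the normalization $\nu_1\mapsto X+1$ there is no boundary term at all.

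This seems to contradict the $\chi_{r,j}$ term in \eqref{eq:oddconeformula}. The resolution is that $\widehat f$ is defined with $\sgnstd(u-v)+\sgnstd(u+v)$, and the relation to \eqref{eq:fdef} runs through $c_\pm$, whose signs I must recheck. If the representative of $v\in\Z-1/6$ is taken as $-1/6+Y$ instead, the offsets become $\tfrac{2j-1}{4r}\pm\tfrac16$. Then one sign, say $\sgnstd(X-Y+\tfrac{2j-1}{4r}+\tfrac16)$, and $\sgnstd(X+Y+\tfrac{2j-1}{4r}-\tfrac16)$ become the relevant comparisons. The offset $\tfrac{2j-1}{4r}-\tfrac16$ is negative, zero, or positive according to whether $3(2j-1)$ is less than, equal to, or greater than $2r$. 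Shifting by one gives the dichotomy $3j\geq r+2$ versus $3j\leq r+1$. I will therefore compute with the $\Z^2$-coordinates for which the exponent becomes $-(3t^2+5t)/2$ after the shift, as forced by \eqref{eq:conesum}. I will then record the discrepancy between the kernel and $2\epsilon$, which is $\mp2$ on a single line ($X+Y=$ const, or $X=Y$). The line sum is $\sum_X(-1)^Xq^{((4r-3)X^2+bX)/2}$, which Jacobi's triple product turns into $J_{\cdot,4r-3}$. I then normalize using $j(q^{m+a};q^m)=-q^{-a}j(q^a;q^m)$, exactly as in \eqref{eq:evenboundary}, to obtain $-\chi_{r,j}\,q\,J_{2r-2j,4r-3}$. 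Since $3\nmid(2j-1)\cdot$(appropriate combination) can fail, I also need to check that no $\sgnstd(0)$ half-case arises. The stated formula has no half case, which suggests the relevant offset is never an integer. I will confirm this by verifying that $\tfrac{2j-1}{4r}-\tfrac16\notin\Z$ plays no role after the shift, or that the equality case is absorbed into $3j\geq r+2$ via integer rounding.

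\textbf{Step 3: prove \eqref{eq:coneS}.} In $\Cone$, split $\epsilon$ into its two cones. On the cone $n\geq0$, $-n-1\leq t\leq n$, and on the cone $n\leq -1$, $n+1<t<-n-1$, i.e. $-|n|+1\le t\le |n|-2$ in terms of $m=-n\geq1$. Completing the square and reflecting $t\mapsto -t-1$ turns $(3t^2+5t)/2$ into the exponent of $T$, namely $(3t^2+t)/2$ up to a shift. Reindexing $n\mapsto n-1$ in the first cone then yields $q^{2rn^2\mp(2j-1)n}$ paired with $T_n(q)$. The computation is analogous to the telescoping manipulation in the proof of Theorem~\ref{thm:nahm-fbar-hat}, and should produce $q^{2-2r-2j}S_{r,j}$.

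The main obstacle is the exact bookkeeping in Step 2: fixing the representative so that the exponent matches \eqref{eq:conesum}, and locating precisely the line where the kernels differ, so that the threshold $3j\geq r+2$ and the factor $q$ emerge correctly.
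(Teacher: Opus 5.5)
\textbf{Gap in Step~2.} Your outline is the paper's: the representative $(X+\frac{2j-1}{4r},\,Y+\frac56)^T$, the substitution $X=n+1$, $Y=t$, the comparison with $\epsilon(n,t)$, and then the splitting of the cone sum. Step~1 and the exponent bookkeeping are correct. The error is your claim that $\sgnstd(n+t+1+\delta)$ equals $1$ exactly when $n+t+1\geq0$. The interval you found yourself, $\delta\in(\frac56,\frac43)$, straddles $1$.

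When $\delta>1$, i.e.\ $3(2j-1)>2r$ (for integers, $3j\geq r+2$), this sign is also $+1$ on the line $n+t+1=-1$, where the $\epsilon$-kernel gives $-1$. So, in the very coordinates you started with, the kernel is
\[
 2\epsilon(n,t)+2\chi_{r,j}\1_{\{t=-n-2\}},
\]
not $2\epsilon(n,t)$. The ``contradiction'' you ran into comes from this slip, not from a sign convention for $c_\pm$. Switching to $v=Y-\frac16$ is only a relabeling: it gives the exponent $-(3Y^2-Y)/2$, and forcing $-(3t^2+5t)/2$ sends you back to $Y=t+1$, i.e.\ your original coordinates.

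The case $\delta=1$, which you leave open, is excluded by parity. Indeed, $3(2j-1)=2r$ would equate an odd and an even integer, so no $\sgnstd(0)$ half-term arises. As written, the proposal never derives the boundary term. The line, the sign of the discrepancy, and the resulting product are left as ``$\mp2$ on $X+Y=\mathrm{const}$, or $X=Y$''.

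\textbf{Closing the gap.} On $t=-n-2$, put $k=n+1$. Then $(-1)^t=-(-1)^k$, and the exponent $2rk^2+(2j-1)k-\frac{3t^2+5t}{2}$ becomes $1+\frac{(4r-3)k^2+(4j-3)k}{2}$. Jacobi's triple product gives the line sum
\[
 -qJ_{2r+2j-3,4r-3}=-qJ_{2r-2j,4r-3},
\]
using only $j(z;q)=j(q/z;q)$; no quasi-periodicity shift is needed. The factor $q$ comes from the constant $1$ in the exponent and the minus sign from $(-1)^t$; the kernel discrepancy itself is $+2$. Together with the overall factor $\zeta_{12}^{-5}q^{c^{\mathrm o}_{r,j}}$, this is exactly the term $-\chi_{r,j}qJ_{2r-2j,4r-3}$ inside $2\zeta_{12}^{-5}q^{c^{\mathrm o}_{r,j}}(\cdots)$.

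\textbf{Step~3.} This is fine in outline. Note only that the negative cone is $n<t<-n-1$, i.e.\ $n+1\le t\le -n-2$; your first description is off by one, but your ``i.e.'' form is right. Setting $n=N-1$ and $n=-N-1$ with $N\ge1$ gives the common range $-N\le t\le N-1$. The reflection $t\mapsto-t-1$ turns each inner sum into $-qT_N(q)$. The exponents $2rN^2\pm(2j-1)N-2r-2j+1$ then assemble into $q^{2-2r-2j}S_{r,j}(q)$.
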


\begin{proof}[Proof of Lemma~\ref{lem:oddcone} and Theorem~\ref{thm:euler}]
For \(1\leq j\leq r\), the characteristic has the representative
\begin{equation*}
 a_{2j-1}\equiv
 \binom{(2j-1)/(4r)}{5/6}\pmod{\Z^2}.
\end{equation*}
Adding \((X,Y)^T\), where \(X,Y\in\Z\), to this representative
parametrizes the shifted lattice. The exponential factor in
\eqref{eq:fdef} becomes
\begin{equation*}
 \zeta_{12}^{-5}q^{c^{\mathrm o}_{r,j}}(-1)^Y
 q^{2rX^2+(2j-1)X-(3Y^2+5Y)/2}.
\end{equation*}
The offsets in the two sign functions are
\((2j-1)/(4r)-5/6\) and \((2j-1)/(4r)+5/6\). Comparing these
with the adjacent integers gives
\begin{equation*}
 \begin{cases}
 \sgnstd(X-Y-\tfrac12)+\sgnstd(X+Y+\tfrac12),&3j\leq r+1,\\
 \sgnstd(X-Y-\tfrac12)+\sgnstd(X+Y+\tfrac32),&3j\geq r+2.
 \end{cases}
\end{equation*}
The only possible boundary equality would be
\(3(2j-1)=2r\), which is impossible by parity. Thus no half-weight
term occurs for the odd components. The substitution \(X=n+1\),
\(Y=t\) gives, in both cases,
\begin{equation*}
 \sgnstd(n-t+\tfrac12)
 +\sgnstd(n+t+\tfrac32+\chi_{r,j})
 =2\epsilon(n,t)+2\chi_{r,j}\1_{\{t=-n-2\}}.
\end{equation*}
Here \(\epsilon\) is the cone indicator in
\eqref{eq:coneindicator}. The additional term is supported on the
line \(t=-n-2\). On this line, the substitution \(k=n+1\) and
Jacobi's triple product identity give
\begin{align*}
 &\sum_{n\in\Z}(-1)^{-n-2}
 q^{\displaystyle
 2r(n+1)^2+(2j-1)(n+1)
 -\{3(-n-2)^2+5(-n-2)\}/2}
 \notag\\
 &\hspace{35mm}=-qJ_{2r+2j-3,\,4r-3}
 =-qJ_{2r-2j,\,4r-3},
\end{align*}
where the last equality uses \(j(z;q)=j(q/z;q)\). This proves
\eqref{eq:oddconeformula}. To obtain the second assertion of
Lemma~\ref{lem:oddcone}, evaluate \eqref{eq:conesum} by setting
\(n=N-1\) in the positive cone and \(n=-N-1\) in the negative cone,
with \(N\geq1\). In both cones, the inner sum ranges over
\(-N\leq t\leq N-1\). Replacing \(t\) by \(-t-1\) in each inner
sum gives \(-qT_N(q)\), and the two cone contributions combine to
give \eqref{eq:coneS}.

Combining \eqref{eq:oddconeformula}, \eqref{eq:coneS}, and
\eqref{eq:SoddE} with the identities
\begin{align*}
 c^{\mathrm o}_{r,j}+2r-2j+1
 &=\kappa^{\mathrm o}_{r,j},
 &c^{\mathrm o}_{r,j}+1
 &=\lambda^{\mathrm o}_{r,j}
\end{align*}
gives the boundary correction \eqref{eq:oddtheta} and the
multiple-sum identity \eqref{eq:oddmain}. This proves Theorem~\ref{thm:euler}.
\end{proof}

\subsection{Modular transformations}
\label{subsec:modular-proof}

We construct the real-analytic completion by replacing the sign
functions in \eqref{eq:fdef} with Zwegers' error-function kernels
\cite{Zwegers2002}. The chosen negative vectors satisfy
\[
 Q_r(c_+)=Q_r(c_-)=-\frac{6r(4r-3)}{\gamma_r^2}<0,
 \qquad
 B_r(c_+,c_-)=-\frac{12r(4r+3)}{\gamma_r^2}<0.
\]
Thus \(c_+\) and \(c_-\) lie in the same component of the negative
cone.

\begin{definition}
For \(z\in\C\) and \(v\in\R_{\geq0}\), we use the integrals
\[
 E(z):=2\int_0^z e^{-\pi t^2}\,dt,
 \qquad
 \beta(v):=\int_v^\infty t^{-1/2}e^{-\pi t}\,dt.
\]
These are the functions in
\cite[Definition~1.6 and Lemma~1.7]{Zwegers2002}; in particular,
\[
 E(u)=\sgnstd(u)\bigl(1-\beta(u^2)\bigr)
 \qquad(u\in\R).
\]
For \(c\in\R^2\) with \(Q_r(c)<0\), define
\begin{align*}
 \rho^c(\nu;\tau)&:=
 E\!\left(\frac{B_r(c,\nu)}{\sqrt{-Q_r(c)}}\sqrt y\right),
 &
 \rho^{c_+,c_-}(\nu;\tau)&:=
 \rho^{c_+}(\nu;\tau)-\rho^{c_-}(\nu;\tau).
\end{align*}
For \(a,b\in\R^2\), define
\begin{equation}
 \vartheta_{a,b}^{c_+,c_-}(\tau):=
 \sum_{\nu\in a+\Z^2}\rho^{c_+,c_-}(\nu;\tau)
 e^{2\pi iQ_r(\nu)\tau+2\pi iB_r(\nu,b)}.
 \label{eq:indtheta}
\end{equation}
This is the indefinite theta function of
\cite[Definition~2.1]{Zwegers2002}, with the completed kernel.
Its absolute convergence follows from
\cite[Proposition~2.4]{Zwegers2002}.
\end{definition}

For the characteristics in \eqref{eq:asl}, define the completed
components and their correction terms by
\begin{align}
 H_\ell^{(r)}(\tau)
 &:=\vartheta_{a_\ell,b}^{c_+,c_-}(\tau),
 \notag\\
 G_\ell^{(r)}(\tau)
 &:=\sum_{\nu\in a_\ell+\Z^2}
 \Biggl\{
 \sgnstd(B_r(c_+,\nu))
 \beta\!\left(-\frac{B_r(c_+,\nu)^2}{Q_r(c_+)}y\right)
 \notag\\[-2pt]
 &\hspace{39mm}-
 \sgnstd(B_r(c_-,\nu))
 \beta\!\left(-\frac{B_r(c_-,\nu)^2}{Q_r(c_-)}y\right)
 \Biggr\}
 e^{2\pi iQ_r(\nu)\tau+2\pi iB_r(\nu,b)}.
 \label{eq:Gdef}
\end{align}
The corresponding vectors are
\begin{align*}
 \boldsymbol H^{(r)}
 &:=(H_1^{(r)},\ldots,H_{2r-1}^{(r)})^T,
 &
 \boldsymbol G^{(r)}
 &:=(G_1^{(r)},\ldots,G_{2r-1}^{(r)})^T.
\end{align*}
Since \(E(0)=\sgnstd(0)=0\) and \(\beta(0)=1\),
\cite[Lemma~1.7]{Zwegers2002} gives
\begin{equation}
 \widehat{\boldsymbol f}^{(r)}
 =\boldsymbol H^{(r)}+\boldsymbol G^{(r)}.
 \label{eq:fHG}
\end{equation}
Thus
\(H_\ell^{(r)}=\widehat f_\ell^{(r)}-G_\ell^{(r)}\)
is the real-analytic completion of \(\widehat f_\ell^{(r)}\).

In the scalar formula of Theorem~\ref{thm:modular}, the two
error-function arguments are precisely those in this kernel,
since
\[
 \frac{B_r(c_\pm,\nu)}{\sqrt{-Q_r(c_\pm)}}
 =\sqrt{\frac{24r}{4r-3}}
 \begin{cases}\nu_1-\nu_2,&c_+,\\-\nu_1-\nu_2,&c_-.\end{cases}
\]
For the proof, introduce the matrices
\begin{align}
 T_r&:=\zeta_{24}^{-1}\operatorname{diag}
       (\zeta_{8r}^{1^2},\ldots,\zeta_{8r}^{(2r-1)^2}),
       \label{eq:Tr}\\
 (M_r)_{\ell k}&:=(-1)^{\ell+k}
       \sin\!\left(\frac{\pi\ell k}{2r}\right),
       \qquad1\leq\ell,k\leq2r-1,
       \label{eq:Mr}\\
 C_r(\tau)&:=-\frac{i\tau}{\sqrt r}M_r.
       \label{eq:Cr}
\end{align}
Theorem~\ref{thm:modular} is equivalently
\begin{align}
 \boldsymbol H^{(r)}(\tau+1)&=T_r\boldsymbol H^{(r)}(\tau),
 \label{eq:HT}\\
 \boldsymbol H^{(r)}(-1/\tau)&=C_r(\tau)\boldsymbol H^{(r)}(\tau).
 \label{eq:HS}
\end{align}
The sine matrix satisfies $M_r^T=M_r$ and
$M_r^2=rI_{2r-1}$, where $I_{2r-1}$ is the identity matrix.
The same calculation also gives the transformations of the
holomorphic vector.

\begin{proposition}
\label{prop:holomorphic-transform}
For every $r\geq2$,
\begin{align}
 \boldsymbol G^{(r)}(\tau+1)&=T_r\boldsymbol G^{(r)}(\tau),
 \label{eq:GT}\\
 \widehat{\boldsymbol f}^{(r)}(\tau+1)
 &=T_r\widehat{\boldsymbol f}^{(r)}(\tau),
 \label{eq:fT}\\
 \widehat{\boldsymbol f}^{(r)}(-1/\tau)
 &=C_r(\tau)\widehat{\boldsymbol f}^{(r)}(\tau)
   +\boldsymbol G^{(r)}(-1/\tau)-C_r(\tau)\boldsymbol G^{(r)}(\tau).
 \label{eq:fS}
\end{align}
In particular, the modular discrepancy
\begin{equation}
 \boldsymbol{\mathcal D}_{S,r}(\tau)
 :=\boldsymbol G^{(r)}(-1/\tau)-C_r(\tau)\boldsymbol G^{(r)}(\tau)
 \label{eq:holomorphic-defect}
\end{equation}
is holomorphic on $\Hh$.
\end{proposition}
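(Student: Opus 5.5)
The plan is to derive all three identities from the decomposition \eqref{eq:fHG}, $\widehat{\boldsymbol f}^{(r)}=\boldsymbol H^{(r)}+\boldsymbol G^{(r)}$, together with Theorem~\ref{thm:modular} in the equivalent form \eqref{eq:HT}--\eqref{eq:HS}. We take that theorem as established; its proof uses Zwegers' elliptic and modular transformation properties of $\vartheta^{c_+,c_-}_{a,b}$ and reduces the $S$-matrix to the finite sine transform $M_r$.

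\textbf{Step 1: \eqref{eq:fT}.} I would prove this directly from Definition~\ref{def:fhat}, without using $\boldsymbol G^{(r)}$. Under $\tau\mapsto\tau+1$ the summand acquires the factor $e^{2\pi iQ_r(\nu)}$, while the sign kernel and $e^{2\pi iB_r(\nu,b)}$ are unchanged. For $\nu=a_\ell+(X,Y)^T$ one has
\[
 Q_r(\nu)=2r(s_\ell/(4r)+X)^2-\tfrac32(Y-1/6)^2 .
\]
Expanding this gives
\[
 Q_r(\nu)\equiv \frac{\ell^2}{8r}-\frac1{24}+\tfrac32Y(1-Y)+(s_\ell X+\text{terms})\pmod{\Z},
\]
where the cross terms $s_\ell X$ and $Y/2$ must be checked against the integers. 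The first cross term is $2r\cdot2X s_\ell/(4r)=s_\ell X\in\Z$. The second is $\tfrac32\cdot 2Y/6=Y/2$, which combines with $-\tfrac32Y^2$ to give $\tfrac32Y-\tfrac32Y^2=\tfrac32Y(1-Y)\in\Z$, since $Y(1-Y)$ is even. Hence $Q_r(\nu)\equiv \ell^2/(8r)-1/24 \pmod{\Z}$, and this exponent is exactly the $\ell$-th diagonal entry of $T_r$. So \eqref{eq:fT} holds.

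\textbf{Step 2: \eqref{eq:GT}.} The kernels $\beta(-B_r(c_\pm,\nu)^2y/Q_r(c_\pm))$ depend only on $y$, which is unchanged by $\tau\mapsto\tau+1$. The same phase computation as in Step~1 therefore gives \eqref{eq:GT} term by term. One can alternatively obtain it as the difference of \eqref{eq:fT} and \eqref{eq:HT}. Absolute convergence of $G_\ell^{(r)}$ follows from the decay $\beta(v)\le e^{-\pi v}v^{-1/2}$: outside the cone this Gaussian factor compensates the indefinite exponent, as in \cite[Proposition~2.4]{Zwegers2002}.

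\textbf{Step 3: \eqref{eq:fS}.} I would write $\widehat{\boldsymbol f}^{(r)}(-1/\tau)=\boldsymbol H^{(r)}(-1/\tau)+\boldsymbol G^{(r)}(-1/\tau)$, apply \eqref{eq:HS} to the first term, and then substitute $\boldsymbol H^{(r)}(\tau)=\widehat{\boldsymbol f}^{(r)}(\tau)-\boldsymbol G^{(r)}(\tau)$. This is a purely formal rearrangement.

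\textbf{Holomorphy of $\boldsymbol{\mathcal D}_{S,r}$.} By \eqref{eq:fS},
\[
 \boldsymbol{\mathcal D}_{S,r}(\tau)=\widehat{\boldsymbol f}^{(r)}(-1/\tau)-C_r(\tau)\widehat{\boldsymbol f}^{(r)}(\tau).
\]
Each $\widehat f_\ell^{(r)}$ is holomorphic by Theorem~\ref{thm:nahm-fbar-hat}, the map $\tau\mapsto-1/\tau$ is holomorphic on $\Hh$, and $C_r(\tau)$ is linear in $\tau$. Hence $\boldsymbol{\mathcal D}_{S,r}$ is holomorphic.

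\textbf{Main obstacle.} The only step needing care is the bookkeeping in Step~1: the congruence of $Q_r(\nu)$ modulo $\Z$ across the whole coset must match $\ell^2/(8r)-1/24$ for both parities of $\ell$. The check uses $s_\ell^2=\ell^2$ together with the parity argument for $\tfrac32Y(1-Y)$ given above.
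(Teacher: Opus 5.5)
Your proposal is correct and follows the paper's argument. The paper uses the same coset congruence $Q_r(\nu)\equiv \ell^2/(8r)-1/24 \pmod{\Z}$ together with the fact that the kernels depend on $\tau$ only through $y$. It proves \eqref{eq:GT} first and deduces \eqref{eq:fT} via \eqref{eq:fHG}, whereas you check \eqref{eq:fT} directly; this reordering is immaterial. It then obtains \eqref{eq:fS} and the holomorphy of $\boldsymbol{\mathcal D}_{S,r}$ by exactly your substitution of \eqref{eq:fHG} into \eqref{eq:HS}.

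One small slip: $\tfrac12Y-\tfrac32Y^2$ equals $\tfrac12Y(1-3Y)$, not $\tfrac32Y(1-Y)$. The two differ by the integer $Y$ and both are integral, so the congruence is unaffected.
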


\begin{proof}[Proof of Theorem~\ref{thm:modular} and Proposition~\ref{prop:holomorphic-transform}]
For arbitrary real characteristics, we abbreviate
\(\vartheta_{a,b}^{c_+,c_-}\) to \(\vartheta_{a,b}\).
With \(b=(0,1/6)^T\), define
\begin{equation*}
 \mathfrak a_u:=\binom{u/(4r)}{-1/6},\qquad
 K_u(\tau):=\vartheta_{\mathfrak a_u,b}(\tau)
 \quad(u\in\Z).
\end{equation*}
Thus \(\mathfrak a_{s_\ell}=a_\ell\) in the notation of
\eqref{eq:asl}. Consider the two integral reflections
\[
 \mathscr D:=\operatorname{diag}(-1,1),\qquad
 \mathscr C:=\operatorname{diag}(1,-1).
\]
Both preserve the quadratic form defined by \(A_r\). Moreover,
\(\mathscr Dc_+=c_-\), \(\mathscr Dc_-=c_+\), and
\(\mathscr Db=b\), whereas
\(\mathscr Cc_+=-c_-\) and \(\mathscr Cc_-=-c_+\).  Changes of variables in
\eqref{eq:indtheta} therefore give
\begin{align}
 \rho^{c_+,c_-}(\mathscr D\nu;\tau)&=-\rho^{c_+,c_-}(\nu;\tau),
 &\vartheta_{\mathscr Da,\mathscr Db}
 &=-\vartheta_{a,b},
 \notag\\*
 \rho^{c_+,c_-}(\mathscr C\nu;\tau)&=\rho^{c_+,c_-}(\nu;\tau),
 &\vartheta_{\mathscr Ca,\mathscr Cb}
 &=\vartheta_{a,b}.
 \label{eq:Creflection}
\end{align}
Since \(\mathfrak a_{u+4r}-\mathfrak a_u=(1,0)^T\) and
\(\mathscr D\mathfrak a_u=\mathfrak a_{-u}\), it follows that
\begin{equation}
 K_{u+4r}=K_u,\qquad K_{-u}=-K_u.
 \label{eq:Ksym}
\end{equation}
The periodicity is also the characteristic-shift law
\cite[Corollary~2.9(1)]{Zwegers2002}. In particular, \(K_0=K_{2r}=0\), and
\begin{equation*}
 H_\ell^{(r)}=K_{s_\ell}=\ve_\ell K_\ell.
\end{equation*}

For \(m,n,u\in\Z\), direct calculation gives
\begin{align*}
 Q_r\!\left(\mathfrak a_u+\binom mn\right)-Q_r(\mathfrak a_u)
 &=um+2rm^2+\frac{n-3n^2}{2}\in\Z,
 \\
 Q_r(\mathfrak a_u)&=\frac{u^2}{8r}-\frac1{24}.
\end{align*}
The kernel \(\rho^{c_+,c_-}\) depends on \(\tau\) only through
\(y\), so
\begin{equation}
 K_u(\tau+1)=\zeta_{24}^{-1}\zeta_{8r}^{u^2}K_u(\tau).
 \label{eq:KuT}
\end{equation}
This agrees with the specialization of
\cite[Corollary~2.9(4)]{Zwegers2002}.
The specialization \(u=s_\ell\) in \eqref{eq:KuT} yields \eqref{eq:HT}.
The factors involving \(\beta\) in \eqref{eq:Gdef} also depend only
on \(y\), so the same congruence proves \eqref{eq:GT}. Combining
\eqref{eq:GT} with \eqref{eq:fHG} proves \eqref{eq:fT}.

For the \(S\)-transformation, we evaluate the discriminant-group
sum. The negative vectors lie in the same component of the negative
cone, as checked above, so the transformation formula applies
to the characteristics under consideration. By
\cite[Corollary~2.9(5)]{Zwegers2002},
\begin{equation}
 \vartheta_{a,b}(-1/\tau)=
 \frac{\tau}{\sqrt{12r}}e^{2\pi iB_r(a,b)}
 \sum_{\gamma\in A_r^{-1}\Z^2/\Z^2}
 \vartheta_{b+\gamma,-a}(\tau),
 \label{eq:ZwegersS}
\end{equation}
because \(\det A_r=-12r\) and \(i(-i\tau)^{2/2}=\tau\).
A set of representatives of the discriminant group is
\begin{equation*}
 \gamma_{u,v}:=\binom{u/(4r)}{-v/3},\qquad
 u\pmod{4r},\quad v\in\{0,1,2\}.
\end{equation*}
For \(s,u\in\Z\), \(t\in\R\), and \(v\in\{0,1,2\}\), set
\begin{equation*}
 \mathfrak a_{u,t}:=\binom{u/(4r)}t,\qquad
 \mathfrak b_s:=\binom{-s/(4r)}{1/6},\qquad
 X^{(s)}_{u,v}:=\vartheta_{\mathfrak a_{u,1/6-v/3},\mathfrak b_s}.
\end{equation*}
Since \(B_r(\mathfrak a_s,b)=1/12\), equation
\eqref{eq:ZwegersS} becomes
\begin{equation}
 K_s(-1/\tau)=\frac{\tau\zeta_{12}}{\sqrt{12r}}
 \sum_{u\bmod4r}\sum_{v=0}^2X^{(s)}_{u,v}(\tau).
 \label{eq:Kdisc}
\end{equation}
The characteristic-shift law
\cite[Corollary~2.9(2)]{Zwegers2002}
\begin{equation}
 \vartheta_{a,b+\mu}
 =e^{2\pi iB_r(a,\mu)}
 \vartheta_{a,b}
 \qquad(\mu\in A_r^{-1}\Z^2)
 \label{eq:ellipticlaw}
\end{equation}
gives
\begin{equation}
 X^{(s)}_{u,v}=e^{-2\pi isu/(4r)}
 \vartheta_{\mathfrak a_{u,1/6-v/3},b}.
 \label{eq:Xphase}
\end{equation}
The reflection \eqref{eq:Creflection} and the shift law
\eqref{eq:ellipticlaw}, with \(\mu=(0,1/3)^T\), give
\begin{equation}
 \vartheta_{\mathfrak a_{u,1/6},b}=\zeta_6^{-1}K_u,
 \qquad
 \vartheta_{\mathfrak a_{u,-1/6},b}=K_u.
 \label{eq:twobranches}
\end{equation}
For the third branch, set
\(Y_u:=\vartheta_{\mathfrak a_{u,-1/2},(0,-1/6)^T}\).
Reflection by \(\mathscr C\), followed by the integral translation of the first
characteristic \cite[Corollary~2.9(1)]{Zwegers2002}, gives
\(\vartheta_{\mathfrak a_{u,-1/2},b}=Y_u\), whereas
\eqref{eq:ellipticlaw} gives
\(\vartheta_{\mathfrak a_{u,-1/2},b}=-Y_u\).
Consequently \(\vartheta_{\mathfrak a_{u,-1/2},b}=0\), and hence
\(X^{(s)}_{u,2}=0\) by \eqref{eq:Xphase}.  Since
\(1+\zeta_6^{-1}=\sqrt3\,\zeta_{12}^{-1}\), equations
\eqref{eq:Kdisc}--\eqref{eq:twobranches} reduce to
\begin{equation*}
 K_s(-1/\tau)=\frac{\tau}{2\sqrt r}
 \sum_{u\bmod4r}e^{-2\pi isu/(4r)}K_u(\tau).
\end{equation*}
Pairing \(u=k\) with \(u=4r-k\) by \eqref{eq:Ksym} yields
\begin{equation*}
 K_s(-1/\tau)=-\frac{i\tau}{\sqrt r}
 \sum_{k=1}^{2r-1}
 \sin\!\left(\frac{\pi sk}{2r}\right)K_k(\tau).
\end{equation*}
Finally, \(s_\ell=\ve_\ell\ell\) and
\(K_k=\ve_kH_k^{(r)}\) imply
\begin{equation*}
 \sin\!\left(\frac{\pi s_\ell k}{2r}\right)K_k
 =(-1)^{\ell+k}\sin\!\left(\frac{\pi\ell k}{2r}\right)H_k^{(r)},
\end{equation*}
which proves \eqref{eq:HS}.  For \(1\leq\ell,m\leq2r-1\), the finite sine
orthogonality identity
\begin{equation*}
 \sum_{k=1}^{2r-1}
 \sin\!\left(\frac{\pi\ell k}{2r}\right)
 \sin\!\left(\frac{\pi m k}{2r}\right)
 =r\,\1_{\{\ell=m\}}
\end{equation*}
proves \(M_r^2=rI_{2r-1}\); symmetry is immediate.  Now
substituting \eqref{eq:fHG} into \eqref{eq:HS} gives \eqref{eq:fS}.
The formula for \(\boldsymbol{\mathcal D}_{S,r}\) follows by
rearranging \eqref{eq:fS}. It is holomorphic because both
\(\widehat{\boldsymbol f}^{(r)}(-1/\tau)\) and
\(C_r(\tau)\widehat{\boldsymbol f}^{(r)}(\tau)\) are holomorphic;
hence its antiholomorphic derivative vanishes. No step requires a
parity restriction on \(r\).
\end{proof}

\subsection{The finite correction and its antiholomorphic derivative}
\label{subsec:finite-correction-proof}

\begin{definition}
For \(\alpha,\delta\in\R\) and \(\tau\in\Hh\), define, following
\cite[Definition~4.1]{Zwegers2002},
\[
 R_{\alpha,\delta}(\tau)
 :=\sum_{\nu\in\alpha+\Z}
 \sgnstd(\nu)\beta\bigl(2\nu^2\operatorname{Im}(\tau)\bigr)
 e^{-\pi i\nu^2\tau-2\pi i\nu\delta},
\]
and
\[
 \theta_{\alpha,\delta}(\tau)
 :=\sum_{\nu\in\alpha+\Z}
 e^{\pi i\nu^2\tau+2\pi i\nu\delta}.
\]
For \(z\in\Hh\), we also use the unary theta series
\cite[Definition~1.14]{Zwegers2002}
\[
 \mathcal U_{\alpha,\delta}(z)
 :=\sum_{\nu\in\alpha+\Z}
 \nu e^{\pi i\nu^2z+2\pi i\nu\delta}.
\]
\end{definition}

The term with \(\nu=0\) in \(R_{\alpha,\delta}\) is zero by our
sign convention. For \(m\in\Z\), the phase factor is constant on
\(\alpha+\Z\), and the substitution \(\nu\mapsto-\nu\) gives the
vanishing identities below:
\[
 R_{\alpha,m}(\tau)=e^{-2\pi i\alpha m}R_{\alpha,0}(\tau),
 \qquad R_{0,0}=R_{1/2,0}=0.
\]
Our notation \(\mathcal U_{\alpha,\delta}\) corresponds to Zwegers'
\(g_{\alpha,\delta}\); we use a different symbol to distinguish it
from \(g_j\).

For \(t\in\R\), Jacobi's triple product identity gives
\begin{equation}
 \theta_{t,\pm1/2}((4r-3)\tau)
 =e^{\pm\pi it}q^{(4r-3)t^2/2}
 J_{(4r-3)(t+1/2),\,4r-3}.
 \label{eq:thetaJ}
\end{equation}
The decomposition of \(G_\ell^{(r)}\) is indexed by the following
positive integers:
\begin{equation*}
 d_r:=\frac{4r-3}{\gamma_r},\qquad
 \mathbf N_r:=\frac{12r(4r-3)}{\gamma_r^2},\qquad
 b_r:=\frac{2r}{\gamma_r}.
\end{equation*}
For \(s\in\Z\) and \(0\leq k<d_r\), define
\begin{align}
 u_{s,k}^{+}&:=-\frac{3s+2r+12rk}{\gamma_r},
 &t_{s,k}^{+}&:=\frac{s+4rk+\frac12}{4r-3},
 \label{eq:utplus}\\
 u_{s,k}^{-}&:=\frac{3s-2r+12rk}{\gamma_r},
 &t_{s,k}^{-}&:=\frac{s+4rk-\frac12}{4r-3}.
 \label{eq:utminus}
\end{align}

\begin{lemma}
\label{lem:Gfinite}
For \(1\leq\ell\leq2r-1\), let \(s=s_\ell\) as in
\eqref{eq:asl}.  Then
\begin{align}
 G_\ell^{(r)}(\tau)
 ={}&\sum_{k=0}^{d_r-1}\Biggl[
 R_{u_{s,k}^{-}/\mathbf N_r,\,b_r}(\mathbf N_r\tau)
 \theta_{t_{s,k}^{-},\,1/2}((4r-3)\tau)
 \notag\\[-2pt]
 &\hspace{27mm}-
 R_{u_{s,k}^{+}/\mathbf N_r,\,b_r}(\mathbf N_r\tau)
 \theta_{t_{s,k}^{+},\,-1/2}((4r-3)\tau)
 \Biggr].
 \label{eq:Gfinite}
\end{align}
This is the specialization of \cite[Proposition~4.3]{Zwegers2002}
to the binary lattice defined by \(A_r\). No additional boundary
term is needed, because both \(G_\ell^{(r)}\) and
\(R_{\alpha,\delta}\) use the convention \(\sgnstd(0)=0\).
\end{lemma}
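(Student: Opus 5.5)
We treat the two sign-$\beta$ pieces of $G_\ell^{(r)}$ separately, write each as a sum over a one-dimensional lattice along $c_\pm$ times a sum over the orthogonal complement, and then match the resulting series with $R_{\alpha,\delta}$ and $\theta_{t,\pm1/2}$. This follows the proof of \cite[Proposition~4.3]{Zwegers2002}, made explicit for $A_r$.

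\textbf{Step 1: coordinates for the $c_-$ piece.} Write $\nu=(\nu_1,\nu_2)^T\in a_\ell+\Z^2$ with $\nu_1\in\Z+s/(4r)$ and $\nu_2\in\Z-1/6$. Put
\[
 w:=B_r(c_-,\nu)=-12r(\nu_1+\nu_2)/\gamma_r,\qquad p:=\nu_1+\nu_2,\qquad m:=\nu_1-\nu_2 .
\]
Since $(4r-3)Q_r(\nu)$ is an explicit quadratic in $p$ and $m$, complete the square in $m$:
\[
 Q_r(\nu)=-\frac{w^2}{2\mathbf N_r}\cdot(\text{unit})+\frac{4r-3}{2}\,t^2 ,
\]
where $t$ is an affine function of $\nu$ orthogonal to $c_-$. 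The quantity $-w^2/Q_r(c_-)\cdot y$ then becomes $2(w/\mathbf N_r)^2\,\mathbf N_r y$ after normalising $\nu\mapsto w/\mathbf N_r$. This is exactly the argument $\beta(2\nu^2\operatorname{Im}(\mathbf N_r\tau))$ appearing in $R_{\cdot,\cdot}(\mathbf N_r\tau)$.

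\textbf{Step 2: the $c_-$ piece as a finite sum of products.} The pair $(w,t)$ does not range over a product lattice. Instead, $a_\ell+\Z^2$ splits into $d_r=[\Z^2:\Z c_-'\oplus(\Z c_-)^\perp]$-type cosets. We index these by $k\bmod d_r$, obtained by reducing $\nu_1$ or the combination $3\nu_1+\dots$ modulo the index. On each coset, $w/\mathbf N_r$ runs over $u^-_{s,k}/\mathbf N_r+\Z$ and $t$ runs over $t^-_{s,k}+\Z$ independently. The phase $e^{2\pi iB_r(\nu,b)}=e^{-\pi i\nu_2}$ then factors as $e^{-2\pi i(w/\mathbf N_r)b_r}$ times $e^{\pi i t}$, up to the sign bookkeeping. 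This produces $R_{u^-_{s,k}/\mathbf N_r,b_r}(\mathbf N_r\tau)\,\theta_{t^-_{s,k},1/2}((4r-3)\tau)$.

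\textbf{Step 3: the $c_+$ piece.} Apply the reflection $\mathscr D$, which swaps $c_+$ and $c_-$ and fixes $b$. Alternatively, repeat Steps 1--2 with $m$ and $p$ interchanged. This gives the terms with $u^+,t^+$ and the phase $-1/2$. The overall minus sign comes from the minus in \eqref{eq:Gdef}.

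\textbf{Checks.} Because $\sgnstd(0)=0$ is used on both sides, the $w=0$ terms match and no boundary correction is needed. Absolute convergence, which justifies all the rearrangements, comes from the Gaussian decay of $\beta$ together with $q^{Q}$.

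\textbf{Main obstacle.} The hardest part is the coset decomposition in Step 2. One must check that $k\mapsto(u^\pm_{s,k},t^\pm_{s,k})$ for $0\le k<d_r$ is a complete, non-redundant set of representatives. One must also check that the formulas \eqref{eq:utplus}--\eqref{eq:utminus}, including the shifts $\pm\tfrac12$ and $2r$, come out correctly in both cases $\gamma_r=1$ and $\gamma_r=3$. I would verify this first via the determinant identity $\mathbf N_r(4r-3)=\dots$, and then test numerically on $r=2,3$.
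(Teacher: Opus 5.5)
\textbf{Verdict: same approach as the paper, but there is a gap.} You decompose $a_\ell+\Z^2$ into cosets of $\Z c_\pm\oplus\Z e_\pm$ with $e_\pm=(1,\pm1)^T$, write $\nu=\lambda c_\pm+\mu e_\pm$, and factor each piece into $R\cdot\theta$; that is exactly the paper's argument. However, the part you defer as the ``main obstacle'' and ``sign bookkeeping'' is the whole content of \eqref{eq:Gfinite}, namely the exact characteristics and signs. In addition, one of your explicit claims in Step~2 is wrong.

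\textbf{The sign error in Step~2.} With $w=B_r(c_-,\nu)$ as you define it, one has $w=-\mathbf N_r\lambda$. So $w/\mathbf N_r$ runs over $-u^-_{s,k}/\mathbf N_r+\Z$, not over $u^-_{s,k}/\mathbf N_r+\Z$, and $e^{-\pi i\nu_2}=e^{2\pi i b_r w/\mathbf N_r}e^{\pi i\mu}$. If you combine your stated range and phase with the coefficient $-\sgnstd(w)$ from \eqref{eq:Gdef}, you get $-R_{u^-_{s,k}/\mathbf N_r,b_r}$ instead of $+R_{u^-_{s,k}/\mathbf N_r,b_r}$.

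The fix is to use $\lambda=-B_r(c_\pm,\nu)/\mathbf N_r$ as the $R$-variable. Then $-\sgnstd(B_r(c_-,\nu))=\sgnstd(\lambda)$ gives the plus sign of the $c_-$ terms. Likewise $\sgnstd(B_r(c_+,\nu))=-\sgnstd(\lambda)$ gives the minus sign of the $c_+$ terms. So if you compute the $c_+$ piece directly rather than by reflection, its minus sign does not come from the minus in \eqref{eq:Gdef}; that minus is already used up in the $c_-$ terms.

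\textbf{Closing the coset problem.} This needs no numerics. For an integral vector $x$, subtracting $\pm x_2e_\pm$ moves $x$ to the horizontal vector $(x_1\mp x_2,0)^T$. Moreover, $(x,0)^T\in\Z c_\pm+\Z e_\pm$ if and only if $d_r\mid x$. Hence the coset of $x$ is determined by $x_1\mp x_2\bmod d_r$, not by $x_1$ alone as your Step~2 suggests.

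It follows that $v_{s,k}=(s/(4r)+k,-1/6)^T$, $0\le k<d_r$, is a complete, irredundant set of representatives modulo both sublattices at once, and for both values of $\gamma_r$. The projections $\lambda=-B_r(c_\pm,v_{s,k})/\mathbf N_r$ and $\mu=B_r(e_\pm,v_{s,k})/(4r-3)$ then give exactly $u^\pm_{s,k}/\mathbf N_r$ and $t^\pm_{s,k}$, uniformly in $\gamma_r$. This is the paper's argument.

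\textbf{If you keep the reflection route for the $c_+$ piece.} Using $\mathscr D$, you additionally need the following facts:
\begin{itemize}
\item $u^-_{-s,k}=u^+_{s,-k}$ and $t^-_{-s,k}=-t^+_{s,-k}$;
\item $\theta_{-t,1/2}=\theta_{t,-1/2}$;
\item the summand is $d_r$-periodic in $k$, because under $k\mapsto k+d_r$ the quantities $u^\pm/\mathbf N_r$ and $t^\pm$ shift by integers, which lets you reindex $k\mapsto -k$.
\end{itemize}
Using the same $v_{s,k}$ for both signs, as the paper does, avoids this extra bookkeeping.
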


\begin{proof}
Let \(s=s_\ell\), where \(1\leq\ell\leq2r-1\).
The negative vectors satisfy
\begin{equation*}
 Q_r(c_+)=Q_r(c_-)=-\frac{6r(4r-3)}{\gamma_r^2},\qquad
 B_r(c_+,b)=B_r(c_-,b)=-\frac{2r}{\gamma_r}.
\end{equation*}
For \(c\in\R^2\), define its integral \(B_r\)-orthogonal lattice by
\[
 \langle c\rangle_{\Z}^{\perp}
 :=\{v\in\Z^2:B_r(c,v)=0\}.
\]
For the vectors under consideration, these lattices are
\begin{equation*}
 \langle c_+\rangle_{\Z}^{\perp}=\Z(1,1)^T,\qquad
 \langle c_-\rangle_{\Z}^{\perp}=\Z(1,-1)^T.
\end{equation*}
The quotients
\[
 \Z^2/\bigl(\Z c_++\Z(1,1)^T\bigr),\qquad
 \Z^2/\bigl(\Z c_-+\Z(1,-1)^T\bigr)
\]
both have order \((4r-3)/\gamma_r=d_r\), by their determinants.
Since the second coordinate of each orthogonal generator is
\(\pm1\), every coset has a horizontal representative. Thus the
corresponding affine cosets are represented by
\(v_{s,k}:=(s/(4r)+k,-1/6)^T\), with \(0\leq k<d_r\).
Resolving these representatives into parallel and orthogonal
components gives
\begin{equation*}
 v_{s,k}=\frac{u^+_{s,k}}{\mathbf N_r}c_+
          +t^+_{s,k}(1,1)^T
        =\frac{u^-_{s,k}}{\mathbf N_r}c_-
          +t^-_{s,k}(1,-1)^T.
\end{equation*}
These decompositions yield the four parameters in
\eqref{eq:utplus}--\eqref{eq:utminus}.  Moreover, \(-2Q_r(c_\pm)=\mathbf N_r\).

Define \(e_+:=(1,1)^T\) and \(e_-:=(1,-1)^T\).  For either choice of
sign, the coset decomposition is
\begin{equation*}
 \binom{s/(4r)}{-1/6}+\Z^2
 =\bigsqcup_{k=0}^{d_r-1}
 \bigl(v_{s,k}+\Z c_\pm+\Z e_\pm\bigr).
\end{equation*}
Every point in the \(k\)-th coset has a unique expression
\begin{equation*}
 \nu=\lambda c_\pm+\mu e_\pm,\qquad
 (\lambda,\mu)=
 \left(\frac{u_{s,k}^\pm}{\mathbf N_r}+n,
       t_{s,k}^\pm+m\right),\qquad n,m\in\Z.
\end{equation*}
Orthogonality and direct calculation give
\begin{align}
 Q_r(\lambda c_\pm+\mu e_\pm)
 &=-\frac{\mathbf N_r}{2}\lambda^2+\frac{4r-3}{2}\mu^2,
 &B_r(c_\pm,\lambda c_\pm+\mu e_\pm)&=-\mathbf N_r\lambda,
 \label{eq:factorQ}\\
 B_r(\lambda c_++\mu e_+,b)
 &=-b_r\lambda-\frac\mu2,
 &B_r(\lambda c_-+\mu e_-,b)&=-b_r\lambda+\frac\mu2.
 \label{eq:factorB}
\end{align}
For every \(\lambda\in\R\), including \(\lambda=0\), we have
\(\sgnstd(-\mathbf N_r\lambda)=-\sgnstd(\lambda)\).
Absolute convergence allows separate summation in the parallel and
orthogonal coordinates. Substituting
\eqref{eq:factorQ}--\eqref{eq:factorB} into \eqref{eq:Gdef}, the
\(c_+\)-contribution from the \(k\)-th coset is therefore
\[
 -R_{u_{s,k}^+/\mathbf N_r,b_r}(\mathbf N_r\tau)
  \theta_{t_{s,k}^+,-1/2}((4r-3)\tau),
\]
whereas the minus sign preceding the \(c_-\)-term gives the contribution
\[
 R_{u_{s,k}^-/\mathbf N_r,b_r}(\mathbf N_r\tau)
  \theta_{t_{s,k}^-,1/2}((4r-3)\tau).
\]
Summing over \(0\leq k<d_r\) proves \eqref{eq:Gfinite}.
This is the explicit binary-lattice factorization in
\cite[Proposition~4.3]{Zwegers2002}.
\end{proof}

\begin{proposition}
\label{prop:dbar}
For \(\alpha,\delta\in\R\), \(L>0\), and \(\tau=x+iy\in\Hh\),
\[
 \partial_{\bar\tau}R_{\alpha,\delta}(L\tau)
 =-i\sqrt{\frac{L}{2y}}\,
 \overline{\mathcal U_{\alpha,\delta}(L\tau)}.
\]
Consequently, for \(1\leq\ell\leq2r-1\) and \(s=s_\ell\),
\begin{align*}
 \partial_{\bar\tau}H_\ell^{(r)}(\tau)
 =i\sqrt{\frac{\mathbf N_r}{2y}}
 \sum_{k=0}^{d_r-1}\Bigl[&
 \overline{\mathcal U_{u^-_{s,k}/\mathbf N_r,\,b_r}(\mathbf N_r\tau)}
 \theta_{t^-_{s,k},\,1/2}((4r-3)\tau)\\[-2pt]
 &-
 \overline{\mathcal U_{u^+_{s,k}/\mathbf N_r,\,b_r}(\mathbf N_r\tau)}
 \theta_{t^+_{s,k},\,-1/2}((4r-3)\tau)
 \Bigr].
\end{align*}
Thus, apart from the common factor \(i\sqrt{\mathbf N_r/(2y)}\), the
\(\bar\partial\)-derivative of the completion is a finite sum of
products of conjugates of weight-\(3/2\) unary theta series and
weight-\(1/2\) unary theta series.
\end{proposition}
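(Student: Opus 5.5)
The plan has two parts: compute \(\partial_{\bar\tau}R_{\alpha,\delta}(L\tau)\) termwise, then substitute this into Lemma~\ref{lem:Gfinite}.

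\textbf{Step 1: derivative of one term.} For \(\nu\neq0\) set
\[
 F_\nu(\tau):=\sgnstd(\nu)\beta\bigl(2\nu^2L y\bigr)e^{-\pi i\nu^2L\tau-2\pi i\nu\delta}.
\]
The exponential is antiholomorphic-free (it is holomorphic in \(\tau\)), so only the \(\beta\)-factor contributes. From \(\beta'(v)=-v^{-1/2}e^{-\pi v}\) and \(\partial_{\bar\tau}y=i/2\) we get
\[
 \partial_{\bar\tau}\beta(2\nu^2Ly)
 =-\frac{i}{2}\,2\nu^2L\,(2\nu^2Ly)^{-1/2}e^{-2\pi\nu^2Ly}
 =-i|\nu|\sqrt{\tfrac{L}{2y}}\,e^{-2\pi\nu^2Ly}.
\]
Multiplying by \(\sgnstd(\nu)\) turns \(|\nu|\) into \(\nu\). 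Combining the two exponentials gives
\[
 e^{-2\pi\nu^2Ly}e^{-\pi i\nu^2L\tau}=e^{-\pi i\nu^2L\bar\tau},
\]
and \(e^{-2\pi i\nu\delta}\) is the conjugate of \(e^{2\pi i\nu\delta}\) since \(\delta\) is real. Hence
\[
 \partial_{\bar\tau}F_\nu=-i\sqrt{\tfrac{L}{2y}}\;\overline{\nu e^{\pi i\nu^2L\tau+2\pi i\nu\delta}}.
\]
The \(\nu=0\) term is zero on both sides. Summing over \(\nu\in\alpha+\Z\) gives the first formula.

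\textbf{Justification of termwise differentiation.} This needs locally uniform convergence of the differentiated series on \(\Hh\). The factor \(e^{-\pi\nu^2Ly}\) gives Gaussian decay, uniform for \(y\) in compact subsets of \((0,\infty)\). For the undifferentiated series, \(\beta(v)\le e^{-\pi v}v^{-1/2}/\pi\) controls the growth coming from \(e^{\pi\nu^2Ly}\): the product is of size \(e^{-\pi\nu^2Ly}\) up to polynomial factors.

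\textbf{Step 2: the completion.} By \eqref{eq:fHG}, \(H_\ell^{(r)}=\widehat f_\ell^{(r)}-G_\ell^{(r)}\), and \(\widehat f_\ell^{(r)}\) is holomorphic, so \(\partial_{\bar\tau}H_\ell^{(r)}=-\partial_{\bar\tau}G_\ell^{(r)}\). Apply Lemma~\ref{lem:Gfinite}. This is a finite sum over \(k\) of products \(R(\mathbf N_r\tau)\,\theta((4r-3)\tau)\), where each \(\theta\)-factor is holomorphic. Use the Leibniz rule and Step 1 with \(L=\mathbf N_r\), \(\alpha=u^\pm_{s,k}/\mathbf N_r\), \(\delta=b_r\). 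The overall minus sign turns \(-i\) into \(i\) and preserves the relative sign between the two terms, which gives the stated formula.

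\textbf{Step 3: the weight remark.} This follows from Zwegers' description of \(\mathcal U_{\alpha,\delta}\) as weight \(3/2\) and \(\theta_{\alpha,\delta}\) as weight \(1/2\).

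The only delicate point is the analytic justification in Step 1. It is routine given the Gaussian bounds, and the differentiation itself is elementary.
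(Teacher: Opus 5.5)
Your proposal is correct and follows essentially the paper's own route. The paper cites the rescaled form of Zwegers' Proposition~4.2(4), notes that the identity also follows by termwise differentiation using \(\beta'(v)=-v^{-1/2}e^{-\pi v}\) under Gaussian-decay bounds (including the case of integral \(\alpha\), where the \(\nu=0\) term vanishes), and then obtains the formula for \(\partial_{\bar\tau}H_\ell^{(r)}\) from Lemma~\ref{lem:Gfinite} via \(H_\ell^{(r)}=\widehat f_\ell^{(r)}-G_\ell^{(r)}\) and the holomorphy of \(\widehat f_\ell^{(r)}\), exactly as you do.
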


\begin{proof}
The first identity is the rescaled form of
\cite[Proposition~4.2(4)]{Zwegers2002}. It also follows directly by
differentiating the defining series and using
\(\beta'(v)=-v^{-1/2}e^{-\pi v}\). Termwise differentiation is
justified by the Gaussian decay of the summands and their derivatives
on compact subsets of \(\Hh\); the term with \(\nu=0\) is identically
zero. This argument includes integral \(\alpha\), while reduction
modulo \(\Z\) gives the other cases from the range \(0<\alpha<1\)
in the cited proposition. Applying the identity to
\eqref{eq:Gfinite}, together with the holomorphy of
\(\widehat f_\ell^{(r)}\) and the identity
\(H_\ell^{(r)}=\widehat f_\ell^{(r)}-G_\ell^{(r)}\), gives
\(\partial_{\bar\tau}H_\ell^{(r)}
=-\partial_{\bar\tau}G_\ell^{(r)}\), which proves the formula.
\end{proof}

For \(r=2\), the parameters are \(4r-3=5\), \(\gamma_2=1\), \(d_2=5\), \(\mathbf N_2=120\),
and \(b_2=4\).  Substitution into \eqref{eq:Gfinite}, followed by
\eqref{eq:thetaJ}, gives the \(G\)-formulas in
Corollary~\ref{cor:r2}.

For \(r=3\), the parameters are \(4r-3=9\), \(\gamma_3=3\), \(d_3=3\),
\(\mathbf N_3=36\), and \(b_3=2\).  Substitution into \eqref{eq:Gfinite},
followed by \eqref{eq:thetaJ}, gives the \(G\)-formulas in
Corollary~\ref{cor:r3}. The terms with characteristics \(0\) and
\(1/2\) vanish because \(R_{0,2}=R_{1/2,2}=0\).

\subsection{The normalized multiple-sum vector}

Define the boundary vector by
\[
 \Theta_{2j-1}^{(r)}:=\Theta^{\mathrm o}_{r,j}
 \quad(1\leq j\leq r),\qquad
 \Theta_{2j}^{(r)}:=\Theta^{\mathrm e}_{r,j}
 \quad(1\leq j\leq r-1),
 \qquad
 \boldsymbol\Theta^{(r)}:=(\Theta_1^{(r)},\ldots,
                          \Theta_{2r-1}^{(r)})^T.
\]
The following consequence keeps the holomorphic boundary terms
separate from the nonholomorphic correction.

\begin{corollary}
\label{cor:nahm-euler-vector}
Let $\boldsymbol\Phi^{(r)}:=\widehat{\boldsymbol f}^{(r)}-
\boldsymbol\Theta^{(r)}$. Its components are
\begin{align*}
 \Phi_{2j-1}^{(r)}
 &=-2\zeta_{12}^{-5}q^{\kappa^{\mathrm o}_{r,j}}
       (q;q)_\infty\Eodd_{r,j}(q),&&1\leq j\leq r,\\
 \Phi_{2j}^{(r)}
 &=2\zeta_{12}^{-5}q^{\kappa^{\mathrm e}_{r,j}}
       J_{1,4}\mathcal N^+_{r,j}(q^2),&&1\leq j\leq r-1.
\end{align*}
Moreover,
\[
 \boldsymbol H^{(r)}
 =\boldsymbol\Phi^{(r)}+\boldsymbol\Theta^{(r)}-
  \boldsymbol G^{(r)},
\]
and
\begin{align*}
 \boldsymbol\Phi^{(r)}(\tau+1)
 &=T_r\boldsymbol\Phi^{(r)}(\tau),\\
 \boldsymbol\Phi^{(r)}(-1/\tau)
 &=C_r(\tau)\boldsymbol\Phi^{(r)}(\tau)
   +\boldsymbol{\mathcal D}_{S,r}(\tau)\\
 &\quad+C_r(\tau)\boldsymbol\Theta^{(r)}(\tau)
          -\boldsymbol\Theta^{(r)}(-1/\tau).
\end{align*}
\end{corollary}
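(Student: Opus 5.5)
The corollary is an assembly of results already proved, so the plan is to read off each component and then substitute into the transformation laws. By definition $\Phi_{2j-1}^{(r)}=\widehat f_{2j-1}^{(r)}-\Theta^{\mathrm o}_{r,j}$. Theorem~\ref{thm:euler}, in the form \eqref{eq:oddmain}, writes $\widehat f_{2j-1}^{(r)}$ as $-2\zeta_{12}^{-5}q^{\kappa^{\mathrm o}_{r,j}}(q;q)_\infty\Eodd_{r,j}(q)$ plus a boundary term. I will check that this boundary term is exactly $\Theta^{\mathrm o}_{r,j}$ in \eqref{eq:oddtheta}, including the sign and the exponent $\lambda^{\mathrm o}_{r,j}$. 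That gives the odd components. For the even components, Proposition~\ref{prop:even-theta} gives $\widehat f_{2j}^{(r)}=2\zeta_{12}^{-5}q^{\kappa^{\mathrm e}_{r,j}}J_{1,4}\mathcal N^+_{r,j}(q^2)+\Theta^{\mathrm e}_{r,j}$, and subtracting $\Theta^{\mathrm e}_{r,j}$ finishes this part.

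The identity $\boldsymbol H^{(r)}=\boldsymbol\Phi^{(r)}+\boldsymbol\Theta^{(r)}-\boldsymbol G^{(r)}$ follows by substituting $\widehat{\boldsymbol f}^{(r)}=\boldsymbol\Phi^{(r)}+\boldsymbol\Theta^{(r)}$ into \eqref{eq:fHG}.

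For the $T$-law, \eqref{eq:fT} gives $\widehat{\boldsymbol f}^{(r)}(\tau+1)=T_r\widehat{\boldsymbol f}^{(r)}(\tau)$, so it remains to show $\boldsymbol\Theta^{(r)}(\tau+1)=T_r\boldsymbol\Theta^{(r)}(\tau)$ componentwise. A nonzero component $\Theta_\ell^{(r)}$ has the form of a constant times $q^{\lambda}$ times a series in integral powers of $q$. For the even components, $\lambda^{\mathrm e}_{r,j}=j^2/(2r)-1/24=(2j)^2/(8r)-1/24$. The series $J_{a,4r-3}$ with integral $a$ lies in $\Z[[q]]$, so $\Theta_{2j}^{(r)}(\tau+1)=\zeta_{24}^{-1}\zeta_{8r}^{(2j)^2}\Theta_{2j}^{(r)}(\tau)$, which is the $(2j)$-th diagonal entry of $T_r$ in \eqref{eq:Tr}. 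For the odd components, $\lambda^{\mathrm o}_{r,j}=(2j-1)^2/(8r)-1/24$ gives the $(2j-1)$-th entry in the same way. Subtracting then gives $\boldsymbol\Phi^{(r)}(\tau+1)=T_r\boldsymbol\Phi^{(r)}(\tau)$. The one point needing care is that the $q$-expansions are genuinely in integral powers after the prefactor. This follows from Jacobi's triple product: $J_{a,m}=\sum_{n\in\Z}(-1)^nq^{m\binom n2+an}$ is integral whenever $a$ and $m$ are integers.

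For the $S$-law, substitute $\widehat{\boldsymbol f}^{(r)}=\boldsymbol\Phi^{(r)}+\boldsymbol\Theta^{(r)}$ into \eqref{eq:fS}. Using the definition \eqref{eq:holomorphic-defect} of $\boldsymbol{\mathcal D}_{S,r}$, this reads
\[
 \boldsymbol\Phi^{(r)}(-1/\tau)+\boldsymbol\Theta^{(r)}(-1/\tau)
 =C_r(\tau)\boldsymbol\Phi^{(r)}(\tau)+C_r(\tau)\boldsymbol\Theta^{(r)}(\tau)+\boldsymbol{\mathcal D}_{S,r}(\tau).
\]
Rearranging gives the stated formula. No step is a real obstacle; the only place requiring attention is the component-by-component bookkeeping of the boundary prefactors in the $T$-law.
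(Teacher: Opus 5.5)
Your proposal is correct and matches the paper's proof step for step. You read off the components from \eqref{eq:oddmain} and \eqref{eq:evenmain}, obtain the identity for $\boldsymbol H^{(r)}$ from \eqref{eq:fHG}, derive $\boldsymbol\Theta^{(r)}(\tau+1)=T_r\boldsymbol\Theta^{(r)}(\tau)$ from the prefactors $q^{\lambda^{\mathrm o}_{r,j}}$, $q^{\lambda^{\mathrm e}_{r,j}}$ and the integral exponents of the Jacobi products, and rearrange \eqref{eq:fS}; your explicit exponent and triple-product checks just spell out what the paper states in one line.
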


\begin{proof}
The component formulas for \(\boldsymbol\Phi^{(r)}\) follow from
\eqref{eq:oddmain} and \eqref{eq:evenmain} upon subtraction of
\(\boldsymbol\Theta^{(r)}\).  Equation~\eqref{eq:fHG} then gives the
displayed identity for \(\boldsymbol H^{(r)}\).  Each Jacobi product occurring in
\(\boldsymbol\Theta^{(r)}\) has integral exponents; only its explicit
monomial prefactor contributes a phase under \(\tau\mapsto\tau+1\).  Moreover,
\[
 e^{2\pi i\lambda^{\mathrm o}_{r,j}}
 =\zeta_{24}^{-1}\zeta_{8r}^{(2j-1)^2},
 \qquad
 e^{2\pi i\lambda^{\mathrm e}_{r,j}}
 =\zeta_{24}^{-1}\zeta_{8r}^{(2j)^2}.
\]
Hence
\(\boldsymbol\Theta^{(r)}(\tau+1)
=T_r\boldsymbol\Theta^{(r)}(\tau)\), and \eqref{eq:fT} gives the
stated \(T\)-transformation for \(\boldsymbol\Phi^{(r)}\).  The remaining
identity follows by substituting
\(\widehat{\boldsymbol f}^{(r)}
=\boldsymbol\Phi^{(r)}+\boldsymbol\Theta^{(r)}\) into
\eqref{eq:fS} and using the definition of \(C_r(\tau)\).
\end{proof}

 For \(a\in\R\) and \(m>0\), define
\[
 \mathcal J_{a,m}(\tau)
 :=q^{(m-2a)^2/(8m)}J_{a,m}.
\]
Jacobi's triple product identity gives
\[
 \mathcal J_{a,m}(\tau)
 =\sum_{n\in\Z}(-1)^n
 q^{\frac m2\left(n+\frac{2a-m}{2m}\right)^2}.
\]
Thus \(\mathcal J_{a,m}\) is a unary theta series with rational
characteristic whenever \(a,m\in\Q\).  The boundary products
in the first two theorems have the exact normalizations
\begin{align*}
 q^{\lambda^{\mathrm e}_{r,j}}J_{2r-2j-1,\,4r-3}
 &=q^{-\frac{(r-3j)^2}{6r(4r-3)}}
   \mathcal J_{2r-2j-1,\,4r-3}(\tau),\\
 q^{\lambda^{\mathrm o}_{r,j}}J_{2r-2j,\,4r-3}
 &=q^{-\frac{(2r-6j+3)^2}{24r(4r-3)}}
   \mathcal J_{2r-2j,\,4r-3}(\tau).
\end{align*}
Thus the boundary products are generally monomial shifts of
normalized unary theta series. Keeping these factors explicit gives the transformation
laws for the normalized multiple-sum vector.

\subsection{An obstruction to a single untwisted Nahm sum}

The multiple-sum formulas above involve product factors and, in the
even case, signs. For the following normalized, boundary-subtracted
factors, these formulas cannot be replaced by a single untwisted
Nahm sum. Define, for \(1\leq j\leq r\) and
\(1\leq j\leq r-1\), respectively,
\begin{align}
 P^{\mathrm o}_{r,j}(q)
 &:=-\frac{\zeta_{12}^{5}}{2}q^{-\kappa^{\mathrm o}_{r,j}}
 \bigl(\widehat f_{2j-1}^{(r)}(\tau)-\Theta^{\mathrm o}_{r,j}(\tau)\bigr)
 =(q;q)_\infty\Eodd_{r,j}(q),
 \label{eq:Podd}\\
 P^{\mathrm e}_{r,j}(q)
 &:=\frac{\zeta_{12}^{5}}{2}q^{-\kappa^{\mathrm e}_{r,j}}
 \bigl(\widehat f_{2j}^{(r)}(\tau)-\Theta^{\mathrm e}_{r,j}(\tau)\bigr)
 =\frac{(q^2;q^2)_\infty}{1+q}\Eeven_{r,j}(q^2).
 \label{eq:Peven}
\end{align}

\begin{proposition}
\label{prop:strictobstruction}
Let \(r\geq2\).  For \(1\leq j\leq r\), the series
\(P^{\mathrm o}_{r,j}\), and for \(1\leq j\leq r-1\), the series
\(P^{\mathrm e}_{r,j}\), admit neither of the forms
\begin{equation}
 Cq^\alpha f_{A,\mathbf b,c}(q^h),
 \qquad
 Cq^\alpha\widetilde f_{A,\mathbf b,c,\mathbf d}(q^h),
 \label{eq:strictcandidate}
\end{equation}
where \(m\in\Z_{>0}\), \(C\in\C\setminus\{0\}\),
\(\alpha\in\R\), and \(h>0\). The data on the right may have any
rank \(m\), but must satisfy the defining rationality and
positive-definiteness conditions for ordinary or generalized Nahm
sums. Equality is understood as equality of generalized series at
\(q=0\). The conclusion excludes a single untwisted sum of the
displayed form; it does not exclude root-of-unity twists, finite
combinations, or scalar modularity on a subgroup.
\end{proposition}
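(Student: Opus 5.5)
The obstruction rests on one observation: every untwisted Nahm sum of either displayed form has \emph{nonnegative} coefficients. I will compute the first few coefficients of each $P^{\mathrm o}_{r,j}$ and $P^{\mathrm e}_{r,j}$ and exhibit a sign pattern no such series can have.

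\emph{Positivity of the candidates.} Expand $1/\prod_a(q^{d_a};q^{d_a})_{n_a}$ as a power series with nonnegative coefficients. Then $f_{A,\mathbf b,c}(q^h)$ and $\widetilde f_{A,\mathbf b,c,\mathbf d}(q^h)$ are generalized power series $\sum_\gamma c_\gamma q^\gamma$ with $c_\gamma\geq0$. Positive definiteness makes each exponent set finite below any bound, so the expansion is well defined. Multiplying by $Cq^\alpha$ gives $Cq^\alpha F$ with $F$ having nonnegative coefficients.

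Let $\gamma_0$ be the least exponent with $c_{\gamma_0}>0$. The leading term of the candidate is $Cc_{\gamma_0}q^{\alpha+\gamma_0}$. Hence every nonzero coefficient of the candidate equals $C$ times a positive real. In particular, \emph{all nonzero coefficients have the same argument}, that of $C$. Only this weak invariant will be used, so the rank $m$, the data $\mathbf b,c,\mathbf d$ and the scaling $h$ are irrelevant. Any rescaling or shift just relabels exponents.

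\emph{Even components.} By \eqref{eq:Peven} and Lemma~\ref{lem:evenBailey}, $P^{\mathrm e}_{r,j}(q)=(q^2;q^2)_\infty\Eeven_{r,j}(q^2)/(1+q)$. I will expand to low order in $q$.
\begin{itemize}
\item The factor $1/(1+q)=1-q+q^2-\cdots$ already introduces sign alternation.
\item The leading term of $\Eeven_{r,j}(q^2)$ is $1$, from all $N_a=0$.
\item Its next terms come from $N_1=1$ and the other indices zero, giving $q^{2}\cdot q^{2(1+\mathbf 1_{j=1})}/(1-q^2)$ and similar.
\end{itemize}
So $P^{\mathrm e}_{r,j}=1-q+O(q^2)$. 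The coefficients $1$ and $-1$ are nonzero with opposite arguments, a contradiction. I must check that nothing from $\Eeven$ or $(q^2;q^2)_\infty$ reaches the $q^1$ coefficient. Both are series in $q^2$, so the $q^1$ coefficient is exactly $-1$.

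\emph{Odd components.} By \eqref{eq:Podd}, $P^{\mathrm o}_{r,j}=(q;q)_\infty\Eodd_{r,j}(q)$. Here $\Eodd_{r,j}=1+q^{e}+\cdots$, where the first nontrivial exponent comes from $N_1=1$ and the others zero. That exponent is $e=2$ if $j\geq2$ and $e=3$ if $j=1$. Also $(q;q)_\infty=1-q-q^2+q^5+\cdots$. Hence the product is $1-q+O(q^2)$, since no term of $\Eodd$ contributes at $q^1$ (the minimal positive exponent is at least $2$). Again the coefficients $1$ and $-1$ give the contradiction.

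The main obstacle is this bookkeeping of the lowest exponents of the nested sums in all ranges of $j$, including $r=2$ and $j=1$. I will do it by noting that any nonzero tuple has $N_1\geq1$, which contributes at least $q^{N_1^2+N_1}\ge q^2$. A subsidiary point is to justify comparing generalized series termwise. Both sides are absolutely convergent $\sum c_\gamma q^\gamma$ with discrete, bounded-below exponents, so uniqueness of such expansions, as in the statement's convention, allows matching the two smallest exponents $\alpha+\gamma_0$ and the next one.
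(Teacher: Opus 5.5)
Your argument is correct and is essentially the paper's: both show that $P^{\mathrm o}_{r,j}=1-q+O(q^2)$ and $P^{\mathrm e}_{r,j}=1-q+O(q^2)$, and both get the contradiction from the fact that every candidate is $C$ times a series with nonnegative coefficients. One justification needs repair. $\Eeven_{r,j}(q^2)$ is not a series in $q^2$, because the term $\tfrac12N_{r-1}^2$ and the factor $(-q^{3};q^2)_{N_{r-1}}^{-1}$ produce odd powers. The absence of a $q^1$ contribution from it should instead come from your $N_1\geq1$ bound, which gives $\Eeven_{r,j}(q^2)=1+O(q^4)$, exactly as in the paper.
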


\begin{proof}
In this proof, \(O(q^L)\) denotes a generalized series all of whose
exponents are at least \(L\).
For every nonzero vector in \(\Eodd_{r,j}\), the quadratic part of the
exponent is at least \(1\).  The linear part also contributes at least \(1\),
because all its coefficients are positive integers.  All reciprocal
Pochhammer factors in \eqref{eq:Eodd} contain only nonnegative powers, so
\begin{equation*}
 \Eodd_{r,j}(q)=1+O(q^2).
\end{equation*}
Similarly, for every nonzero vector in the defining sum for
\(\Eeven_{r,j}\), the quadratic and linear
parts together contribute at least \(2\) before the substitution
\(q\mapsto q^2\).  After this substitution, all reciprocal denominator
factors expand in nonnegative powers of \(q\).  Only
\((-q^3;q^2)_{N_{r-1}}^{-1}\) can have alternating coefficients, and it
cannot lower the exponent of a nonzero vector.  Consequently,
\(\Eeven_{r,j}(q^2)=1+O(q^4)\).  Using
\[
 (q;q)_\infty=1-q+O(q^2),\qquad
 \frac{(q^2;q^2)_\infty}{1+q}=1-q+O(q^2),
\]
together with \eqref{eq:Eeven} and
\eqref{eq:Podd}--\eqref{eq:Peven}, we obtain
\begin{equation}
 P^{\mathrm o}_{r,j}(q)=1-q+O(q^2),\qquad
 P^{\mathrm e}_{r,j}(q)=1-q+O(q^2).
 \label{eq:Pinitial}
\end{equation}

Let \(\Phi(q)\) be either untwisted Nahm sum on the right of
\eqref{eq:strictcandidate}. Let \(d_a=1\) in the ordinary case,
and let \(d_a\) denote the \(a\)th entry of \(\mathbf d\) in the
symmetrizable case. Define \(D:=\operatorname{diag}(d_1,\ldots,d_m)\).
By positive definiteness,
\[
 \frac12\mathbf n^TAD\mathbf n
 +\mathbf n^T\mathbf b+c
\]
tends to \(+\infty\) as \(\|\mathbf n\|\to\infty\) in
\(\N^m\). Moreover,
\[
 \frac{1}{(q^{d_1};q^{d_1})_{n_1}(q^{d_2};q^{d_2})_{n_2}\cdots
 (q^{d_m};q^{d_m})_{n_m}}
 =\sum_{k\geq0}c_{\mathbf n,k}q^k,\qquad
 c_{\mathbf n,k}\in\Z_{\geq0}.
\]
Consequently \(\Phi\) has a locally finite generalized expansion
\begin{equation*}
 \Phi(q)=\sum_{\lambda\in\Lambda}a_\lambda q^\lambda,
 \qquad a_\lambda\in\Z_{\geq0},
\end{equation*}
whose support \(\Lambda\subset\R\) is bounded below.  If
\(\lambda_0:=\min\Lambda\), then \(a_{\lambda_0}>0\).

Suppose that one of the series in \eqref{eq:Pinitial} were equal to
\(Cq^\alpha\Phi(q^h)\).  Comparison of the lowest exponents and their
coefficients gives
\[
 \alpha+h\lambda_0=0,\qquad Ca_{\lambda_0}=1.
\]
Thus \(C\) is positive real, and every coefficient of
\(Cq^\alpha\Phi(q^h)\) is nonnegative.  This contradicts the
coefficient \(-1\) of \(q\) in \eqref{eq:Pinitial}, proving the
proposition.
\end{proof}

\begin{remark}
Proposition~\ref{prop:strictobstruction} concerns only the normalized
factors \(P^{\mathrm o}_{r,j}\) and \(P^{\mathrm e}_{r,j}\) in
\eqref{eq:Podd}--\eqref{eq:Peven}; it makes no assertion about
\(\Eodd_{r,j}\), \(\Eeven_{r,j}\), or
\(\mathcal N^+_{r,j}\) separately.
\end{remark}
\subsection{The case \texorpdfstring{\(r=2\)}{r=2}}
\label{subsec:r2}

For \(r=2\), the lattice data specialize to
\begin{equation*}
 A_2=\operatorname{diag}(8,-3),\qquad
 c_+=\binom{3}{8},\qquad c_-=\binom{-3}{8},\qquad
 b=\binom{0}{1/6}.
\end{equation*}
The three characteristics are
\begin{equation*}
 a_1=\binom{1/8}{-1/6},\qquad
 a_2=\binom{-1/4}{-1/6},\qquad
 a_3=\binom{3/8}{-1/6}.
\end{equation*}
For the correction terms, we use the notation
\begin{equation*}
 \omega_2:=\zeta_{20},\qquad
 U_1^{(2)}(\tau):=q^{9/40}J_{1,5},\qquad
 U_2^{(2)}(\tau):=q^{1/40}J_{2,5},\qquad
 \mathscr R_a^{(2)}(\tau):=R_{a,4}(120\tau).
\end{equation*}
For \(\lambda_1,\lambda_2\in\Z\), define the rank-two Euler sum
\begin{equation*}
 \mathcal E^{[2]}_{\lambda_1,\lambda_2}(q):=
 \sum_{n_1,n_2\geq0}
 \frac{q^{\sum_{a,b=1}^{2}\min(a,b)n_an_b+\sum_{b=1}^{2}(\lambda_1+\cdots+\lambda_b)n_b}}
 {(q;q)_{n_1}}.
\end{equation*}
The substitution \(n_1=N_1-N_2\), \(n_2=N_2\) gives
\begin{equation*}
 \Eodd_{2,1}(q)=\mathcal E^{[2]}_{2,2}(q),\qquad
 \Eodd_{2,2}(q)=\mathcal E^{[2]}_{1,1}(q).
\end{equation*}

\begin{corollary}
\label{cor:r2}
The completed vector and its \(T\)-transformation matrix are
\begin{equation*}
 \boldsymbol H^{(2)}(\tau)=
 \begin{pmatrix}
 \vartheta_{(1/8,-1/6)^T,b}^{c_+,c_-}(\tau)\\
 \vartheta_{(-1/4,-1/6)^T,b}^{c_+,c_-}(\tau)\\
 \vartheta_{(3/8,-1/6)^T,b}^{c_+,c_-}(\tau)
 \end{pmatrix},
 \qquad
 T_2=\operatorname{diag}
 \bigl(\zeta_{48},\zeta_{24}^{5},\zeta_{48}^{25}\bigr).
\end{equation*}
The sine matrix in the \(S\)-transformation is
\begin{equation*}
 M_2=
 \begin{pmatrix}
 \frac1{\sqrt2}&-1&\frac1{\sqrt2}\\
 -1&0&1\\
 \frac1{\sqrt2}&1&\frac1{\sqrt2}
 \end{pmatrix},
 \qquad M_2^2=2I_3.
\end{equation*}
Consequently,
\begin{align*}
 \boldsymbol H^{(2)}(\tau+1)
 &=T_2\boldsymbol H^{(2)}(\tau),
 &
 \boldsymbol H^{(2)}(-1/\tau)
 &=-\frac{i\tau}{\sqrt2}M_2\boldsymbol H^{(2)}(\tau),
 \\
 \boldsymbol G^{(2)}(\tau+1)
 &=T_2\boldsymbol G^{(2)}(\tau),
 &
 \widehat{\boldsymbol f}^{(2)}(\tau+1)
 &=T_2\widehat{\boldsymbol f}^{(2)}(\tau).
\end{align*}
The holomorphic vector satisfies
\begin{align*}
 \widehat{\boldsymbol f}^{(2)}
 &=\boldsymbol H^{(2)}+\boldsymbol G^{(2)},
 \\
 \widehat{\boldsymbol f}^{(2)}(-1/\tau)
 &=-\frac{i\tau}{\sqrt2}M_2\widehat{\boldsymbol f}^{(2)}(\tau)
 +\boldsymbol G^{(2)}(-1/\tau)
 +\frac{i\tau}{\sqrt2}M_2\boldsymbol G^{(2)}(\tau).
\end{align*}

The components of the nonholomorphic correction are
\begin{align*}
 G_1^{(2)}={}&
 U_1^{(2)}\!\left\{
 \omega_2^3(\mathscr R_{47/120}^{(2)}-\mathscr R_{17/120}^{(2)})
 +\omega_2^7(\mathscr R_{113/120}^{(2)}-\mathscr R_{23/120}^{(2)})
 \right\}
 \notag\\[-2pt]
 &+U_2^{(2)}\!\left\{
 \omega_2(\mathscr R_{119/120}^{(2)}-\mathscr R_{89/120}^{(2)})
 +\omega_2^9(\mathscr R_{41/120}^{(2)}-\mathscr R_{71/120}^{(2)})
 \right\},
 \\
 G_2^{(2)}={}&
 U_1^{(2)}\!\left\{
 \omega_2^3(\mathscr R_{31/60}^{(2)}-\mathscr R_{1/60}^{(2)})
 +\omega_2^7(\mathscr R_{49/60}^{(2)}-\mathscr R_{19/60}^{(2)})
 \right\}
 \notag\\[-2pt]
 &+U_2^{(2)}\!\left\{
 \omega_2(\mathscr R_{7/60}^{(2)}-\mathscr R_{37/60}^{(2)})
 +\omega_2^9(\mathscr R_{13/60}^{(2)}-\mathscr R_{43/60}^{(2)})
 \right\},
 \\
 G_3^{(2)}={}&
 U_1^{(2)}\!\left\{
 \omega_2^3(\mathscr R_{77/120}^{(2)}-\mathscr R_{107/120}^{(2)})
 +\omega_2^7(\mathscr R_{83/120}^{(2)}-\mathscr R_{53/120}^{(2)})
 \right\}
 \notag\\[-2pt]
 &+U_2^{(2)}\!\left\{
 \omega_2(\mathscr R_{29/120}^{(2)}-\mathscr R_{59/120}^{(2)})
 +\omega_2^9(\mathscr R_{11/120}^{(2)}-\mathscr R_{101/120}^{(2)})
 \right\}.
\end{align*}
Here the argument \(\tau\) is suppressed in \(U_k^{(2)}\) and
\(\mathscr R_a^{(2)}\).  In each component, two terms from
\eqref{eq:Gfinite} vanish because \(J_{0,5}=j(1;q^5)=0\).

The boundary corrections are
\begin{equation*}
 \Theta^{\mathrm o}_{2,1}(\tau)=0,\qquad
 \Theta^{\mathrm e}_{2,1}(\tau)=2\zeta_{12}^{-5}q^{5/24}J_{1,5},
 \qquad
 \Theta^{\mathrm o}_{2,2}(\tau)
 =-2\zeta_{12}^{-5}q^{25/48}J_{0,5}=0.
\end{equation*}
The multiple-sum representations are
\begin{align*}
 \widehat f_1^{(2)}(\tau)
 &=-2\zeta_{12}^{-5}q^{97/48}(q;q)_\infty
 \mathcal E^{[2]}_{2,2}(q),
 \\
 \widehat f_2^{(2)}(\tau)
 &=2\zeta_{12}^{-5}q^{29/24}
 J_{1,4}\mathcal N^+_{2,1}(q^2)
 +2\zeta_{12}^{-5}q^{5/24}J_{1,5},
 \\
 \widehat f_3^{(2)}(\tau)
 &=-2\zeta_{12}^{-5}q^{25/48}(q;q)_\infty
 \mathcal E^{[2]}_{1,1}(q).
\end{align*}
The two identities in Theorem~\ref{thm:nahm-fbar-hat} also give
\begin{align*}
 \mathcal N^-_{2,1}(q^2)
 &=\frac{\zeta_{12}^{5}}{2J_{1,4}}
   q^{-5/24}\widehat f_2^{(2)}(\tau),\\
 \mathcal N^-_{2,1}(q^2)-q\mathcal N^+_{2,1}(q^2)
 &=\frac{J_{1,5}}{J_{1,4}}.
\end{align*}
Thus the boundary correction is nonzero only for
\(\widehat f_2^{(2)}\).  For \(\widehat f_1^{(2)}\), it vanishes
by the first case of \eqref{eq:oddtheta}; for
\(\widehat f_3^{(2)}\), it vanishes because \(J_{0,5}=0\).
\end{corollary}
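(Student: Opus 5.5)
The plan is to specialize every general result of this section to \(r=2\) and check the stated numerical data. The structural claims follow directly from earlier results.

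\begin{itemize}
\item The formula for \(\boldsymbol H^{(2)}\) is the definition \(H_\ell^{(r)}=\vartheta^{c_+,c_-}_{a_\ell,b}\) with the characteristics \(a_1,a_2,a_3\) from \eqref{eq:asl}; note \(s_2=-2\) gives \(-2/8=-1/4\).
\item \(T_2\) comes from \eqref{eq:Tr}: \(\zeta_{24}^{-1}\zeta_{16}^{k^2}\) for \(k=1,2,3\) equals \(\zeta_{48}\), \(\zeta_{24}^5\) and \(\zeta_{48}^{25}\). Indeed \(-2+3=1\), \(-2+12=10\) and \(-2+27=25\) in units of \(1/48\).
\item \(M_2\) is \eqref{eq:Mr} at \(r=2\), using \(\sin(\pi k\ell/4)\) with the signs \((-1)^{\ell+k}\). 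Its square is \(2I_3\) by the sine orthogonality in the proof of Theorem~\ref{thm:modular}.
\item The transformation laws for \(\boldsymbol H^{(2)}\), \(\boldsymbol G^{(2)}\) and \(\widehat{\boldsymbol f}^{(2)}\) are Theorem~\ref{thm:modular}, Proposition~\ref{prop:holomorphic-transform} and \eqref{eq:fHG} with \(C_2(\tau)=-\tfrac{i\tau}{\sqrt2}M_2\).
\end{itemize}

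Next come the boundary terms and multiple-sum representations.

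\begin{itemize}
\item For the even index \(j=1\) we have \(3j>r\). So \(\Theta^{\mathrm e}_{2,1}=2\zeta_{12}^{-5}q^{\lambda^{\mathrm e}_{2,1}}J_{1,5}\) with \(\lambda^{\mathrm e}_{2,1}=1/4-1/24=5/24\).
\item For the odd index \(j=1\) we have \(3\le 3\), so \(\Theta^{\mathrm o}_{2,1}=0\). For \(j=2\), \(\lambda^{\mathrm o}=9/16-1/24=25/48\) and \(J_{0,5}=0\).
\item The exponents in the multiple-sum formulas are \(\kappa^{\mathrm o}_{2,1}=49/16-25/24=97/48\), \(\kappa^{\mathrm e}_{2,1}=9/4-25/24=29/24\) and \(\kappa^{\mathrm o}_{2,2}=25/16-25/24=25/48\). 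These are inserted into Corollary~\ref{cor:nahm-euler-vector} together with the stated identification \(\Eodd_{2,j}=\mathcal E^{[2]}\).
\item That identification is the quadratic form \(N_1^2+N_2^2=n_1^2+2n_1n_2+2n_2^2\), which matches \(\sum\min(a,b)n_an_b\). It also uses the linear terms: \(2N_1+2N_2=2n_1+4n_2\) for \(j=1\), and \(N_1+N_2=n_1+2n_2\) for \(j=2\).
\end{itemize}

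The two \(\mathcal N^-\) identities come from the proof of Theorem~\ref{thm:nahm-fbar-hat}.

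\begin{itemize}
\item The displayed expression for \(\mathcal N^-_{r,j}(q^2)\) has exponent \(1-\kappa^{\mathrm e}_{2,1}=-5/24\).
\item Its product term at \(r=2\), \(j=1\) is \((J_{1,5}-J_{0,5})/J_{1,4}=J_{1,5}/J_{1,4}\). This is also why the difference identity reads \(J_{1,5}/J_{1,4}\).
\item For the first identity I would check whether \(\Theta^{\mathrm e}\) cancels the product term. We have \(-\tfrac{\zeta_{12}^5}{2J_{1,4}}q^{-5/24}\Theta^{\mathrm e}_{2,1}=-J_{1,5}/J_{1,4}\), and the product term adds \(+J_{1,5}/J_{1,4}\). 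These cancel exactly, which gives the stated formula without \(\Theta\).
\end{itemize}

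The main obstacle is the explicit \(G^{(2)}\) formulas. The plan is to substitute \(\gamma_2=1\), \(d_2=5\), \(\mathbf N_2=120\) and \(b_2=4\) into Lemma~\ref{lem:Gfinite}.

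\begin{itemize}
\item For each \(s\in\{1,-2,3\}\) and \(k=0,\dots,4\), compute \(u^\pm_{s,k}\) and \(t^\pm_{s,k}\).
\item Convert each \(\theta_{t,\pm1/2}(5\tau)\) via \eqref{eq:thetaJ} into \(e^{\pm\pi i t}q^{5t^2/2}J_{5t+5/2,5}\).
\item Reduce \(5t+5/2\) modulo \(5\), and use \(j(z;q)=j(q/z;q)\) together with the quasi-periodicity of \(j\) to reach \(J_{1,5}\), \(J_{2,5}\) or \(J_{0,5}=0\). In each component exactly the \(J_{0,5}\) terms drop out.
\item Simultaneously reduce the characteristics \(u/120\) modulo \(1\) using \(R_{\alpha+1,4}=R_{\alpha,4}\) (since \(\delta=4\) is integral), and collect the phases into powers of \(\omega_2=\zeta_{20}\) and the monomials into \(U_1^{(2)},U_2^{(2)}\).
\end{itemize}

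This bookkeeping of phases and \(q\)-powers is where errors are likely. I would organize it by tabulating \(s+4rk\pm\tfrac12\) modulo \(5\cdot 2\) for each term, and check one entry numerically to fix signs.
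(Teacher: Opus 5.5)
Your proposal is correct and follows the paper's proof. The paper likewise obtains the corollary by specializing \eqref{eq:Tr}, \eqref{eq:Mr}, Theorems~\ref{thm:modular}, \ref{thm:nahm-fbar-hat}, \ref{thm:euler}, and Lemma~\ref{lem:Gfinite} with $(\gamma_2,d_2,\mathbf N_2,b_2)=(1,5,120,4)$, followed by \eqref{eq:thetaJ}. Your numerical checks are accurate: the $T_2$ exponents, the $\kappa$ and $\lambda$ values, the exact cancellation of $\Theta^{\mathrm e}_{2,1}$ against $J_{1,5}/J_{1,4}$ in the $\mathcal N^-_{2,1}$ formula, and the vanishing of exactly one $J_{0,5}$ term per cone in each $G_\ell^{(2)}$.
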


\subsection{The case \texorpdfstring{\(r=3\)}{r=3}}
\label{subsec:r3}

For \(r=3\), the lattice data specialize to
\begin{equation*}
 A_3=\operatorname{diag}(12,-3),\qquad
 c_+=\binom14,\qquad c_-=\binom{-1}4,\qquad
 b=\binom0{1/6}.
\end{equation*}

For \(\lambda_1,\ldots,\lambda_4\in\Z\), define the fourfold Euler sum
\begin{equation*}
 \mathcal E_{\lambda_1,\ldots,\lambda_4}(q):=
 \sum_{n_1,\ldots,n_4\geq0}
 \frac{q^{\sum_{a,b=1}^{4}\min(a,b)n_an_b+\sum_{b=1}^{4}(\lambda_1+\cdots+\lambda_b)n_b}}
 {(q;q)_{n_1}(q;q)_{n_2}(q;q)_{n_3}}.
\end{equation*}
The change of variables \(n_a=N_a-N_{a+1}\), with \(N_5=0\),
identifies the three odd-component sums with
\(\mathcal E_{2,2,2,2}\), \(\mathcal E_{1,1,2,2}\), and
\(\mathcal E_{1,1,1,1}\). For the nonholomorphic correction, we use
the notation
\begin{equation*}
 \xi:=\zeta_{36},\qquad
 U_k(\tau):=q^{(9-2k)^2/72}J_{k,9}\quad(1\leq k\leq4),
 \qquad
 \mathscr R_a(\tau):=R_{a,2}(36\tau)\quad(a\in\R).
\end{equation*}
With this notation, \(\mathscr R_0=\mathscr R_{1/2}=0\).

\begin{corollary}
\label{cor:r3}
The completed vector and its \(T\)-transformation matrix are
\begin{equation*}
 \boldsymbol H^{(3)}=
 \begin{pmatrix}
 \vartheta_{(1/12,-1/6)^T,b}^{c_+,c_-}\\
 \vartheta_{(-1/6,-1/6)^T,b}^{c_+,c_-}\\
 \vartheta_{(1/4,-1/6)^T,b}^{c_+,c_-}\\
 \vartheta_{(-1/3,-1/6)^T,b}^{c_+,c_-}\\
 \vartheta_{(5/12,-1/6)^T,b}^{c_+,c_-}
 \end{pmatrix},
 \qquad
 T_3=\operatorname{diag}(1,\zeta_8,\zeta_3,\zeta_8^5,1),
\end{equation*}
where the argument \(\tau\) is suppressed in the theta functions.
The sine matrix in the \(S\)-transformation is
\begin{equation*}
 M_3=
 \begin{pmatrix}
 \frac12&-\frac{\sqrt3}{2}&1&-\frac{\sqrt3}{2}&\frac12\\
 -\frac{\sqrt3}{2}&\frac{\sqrt3}{2}&0&-\frac{\sqrt3}{2}&\frac{\sqrt3}{2}\\
 1&0&-1&0&1\\
 -\frac{\sqrt3}{2}&-\frac{\sqrt3}{2}&0&\frac{\sqrt3}{2}&\frac{\sqrt3}{2}\\
 \frac12&\frac{\sqrt3}{2}&1&\frac{\sqrt3}{2}&\frac12
 \end{pmatrix}.
\end{equation*}
Thus \(\boldsymbol H^{(3)}(\tau+1)=T_3\boldsymbol H^{(3)}(\tau)\) and
\(\boldsymbol H^{(3)}(-1/\tau)=-i\tau M_3\boldsymbol H^{(3)}(\tau)/\sqrt3\).

The components of the nonholomorphic correction are
\begin{align*}
 G_1^{(3)}={}&
 \xi^{21}U_3\mathscr R_{1/4}
 +\xi^{15}U_3\mathscr R_{11/12}
 +\xi^7U_1\mathscr R_{11/36}
 -\xi^{13}U_2\mathscr R_{23/36}
 -\xi^{19}U_4\mathscr R_{35/36},
 \\
 G_2^{(3)}={}&
 \xi^{15}U_3\mathscr R_{2/3}
 +\xi^7U_1\mathscr R_{5/9}
 -\xi^{13}U_2\mathscr R_{8/9}
 -\xi^{19}U_4\mathscr R_{2/9},
 \\
 G_3^{(3)}={}&
 \xi^{11}U_1(\mathscr R_{31/36}-\mathscr R_{13/36})
 +\xi^5U_2(\mathscr R_{1/36}-\mathscr R_{19/36})
 +\xi^{17}U_4(\mathscr R_{7/36}-\mathscr R_{25/36}),
 \\
 G_4^{(3)}={}&
 \xi^{33}U_3\mathscr R_{1/6}
 -\xi^7U_1\mathscr R_{1/18}
 +\xi^{13}U_2\mathscr R_{7/18}
 -\xi U_4\mathscr R_{13/18},
 \\
 G_5^{(3)}={}&
 \xi^{33}U_3\mathscr R_{5/12}
 +\xi^3U_3\mathscr R_{3/4}
 -\xi^7U_1\mathscr R_{29/36}
 +\xi^{13}U_2\mathscr R_{5/36}
 -\xi U_4\mathscr R_{17/36}.
\end{align*}
Here the argument \(\tau\) is suppressed in \(U_k\) and
\(\mathscr R_a\).

The multiple-sum representations are
\begin{align*}
 \widehat f_1^{(3)}
 &=-2\zeta_{12}^{-5}q^4(q;q)_\infty
 \mathcal E_{2,2,2,2}(q),
 \\
 \widehat f_2^{(3)}
 &=2\zeta_{12}^{-5}q^{25/8}
 J_{1,4}\mathcal N^+_{3,1}(q^2)
 +\zeta_{12}^{-5}q^{1/8}J_{3,9},
 \\
 \widehat f_3^{(3)}
 &=-2\zeta_{12}^{-5}q^{7/3}(q;q)_\infty
 \mathcal E_{1,1,2,2}(q)
 -2\zeta_{12}^{-5}q^{1/3}J_{2,9},
 \\
 \widehat f_4^{(3)}
 &=2\zeta_{12}^{-5}q^{13/8}
 J_{1,4}\mathcal N^+_{3,2}(q^2)
 +2\zeta_{12}^{-5}q^{5/8}J_{1,9},
 \\
 \widehat f_5^{(3)}
 &=-2\zeta_{12}^{-5}q(q;q)_\infty
 \mathcal E_{1,1,1,1}(q).
\end{align*}
For the two even components, the second identity in
Theorem~\ref{thm:nahm-fbar-hat} gives
\begin{align*}
 \mathcal N^-_{3,1}(q^2)
 &=\frac{\zeta_{12}^{5}}{2J_{1,4}}
   q^{-17/8}\widehat f_2^{(3)}(\tau)
   +\frac{q^{-2}(\frac12J_{3,9}-J_{2,9})}{J_{1,4}},\\
 \mathcal N^-_{3,2}(q^2)
 &=\frac{\zeta_{12}^{5}}{2J_{1,4}}
   q^{-5/8}\widehat f_4^{(3)}(\tau),
\end{align*}
and hence
\begin{align*}
 \mathcal N^-_{3,1}(q^2)-q\mathcal N^+_{3,1}(q^2)
 &=\frac{q^{-2}(J_{3,9}-J_{2,9})}{J_{1,4}},\\
 \mathcal N^-_{3,2}(q^2)-q\mathcal N^+_{3,2}(q^2)
 &=\frac{J_{1,9}}{J_{1,4}}.
\end{align*}
For \(\widehat f_2^{(3)}\), one has \(3j=r\); the coefficient of
the boundary correction is therefore half that in the case \(3j>r\).
\end{corollary}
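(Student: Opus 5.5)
The plan is to specialize the general results of this section to \(r=3\), where \(4r-3=9\), \(\gamma_3=\gcd(3,3)=3\), \(d_3=3\), \(\mathbf N_3=36\) and \(b_3=2\); all of these follow from the definitions preceding Lemma~\ref{lem:Gfinite}. First I would record the lattice data: \(A_3=\operatorname{diag}(12,-3)\) and \(c_\pm=(\pm3,12)^T/3=(\pm1,4)^T\). The characteristics \(a_\ell=(s_\ell/12,-1/6)^T\) with \(s_\ell=1,-2,3,-4,5\) give the displayed vector \(\boldsymbol H^{(3)}\).

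For the transformation matrices I would evaluate \eqref{eq:Tr} directly, \((T_3)_{\ell\ell}=\zeta_{24}^{\ell^2-1}\). This gives \(1,\zeta_{24}^{3}=\zeta_8,\zeta_{24}^{8}=\zeta_3,\zeta_{24}^{15}=\zeta_8^5,\zeta_{24}^{24}=1\). I would then tabulate \((-1)^{\ell+k}\sin(\pi\ell k/6)\) for \(1\le \ell,k\le5\) to get \(M_3\). The laws for \(\boldsymbol H^{(3)}\) are then \eqref{eq:HT} and \eqref{eq:HS} with \(r=3\).

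For the multiple-sum representations I would use Proposition~\ref{prop:even-theta} and Theorem~\ref{thm:euler}, computing the exponents as follows.
\begin{itemize}
\item Even \(j=1\): \(\kappa^{\mathrm e}=25/6-25/24=25/8\). Here \(3j=r\), so the boundary term has coefficient \(1\) and is \(q^{1/8}J_{3,9}\).
\item Even \(j=2\): \(\kappa^{\mathrm e}=13/8\) and \(\lambda^{\mathrm e}=5/8\), with boundary term \(2q^{5/8}J_{1,9}\).
\item Odd \(j=1,2,3\): \(\kappa^{\mathrm o}=4,\,7/3,\,1\). The boundary term is absent for \(j=1\) (since \(3j\le r+1\)) and is \(-2q^{1/3}J_{2,9}\) for \(j=2\). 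For \(j=3\) it vanishes because \(J_{0,9}=0\).
\end{itemize}
The substitution \(n_a=N_a-N_{a+1}\) turns \(\sum_a N_a^2\) into \(\sum\min(a,b)n_an_b\). The linear weights \((1,\ldots,1,2,\ldots,2)\) on the \(N_a\) become the partial-sum coefficients \(\lambda_1+\cdots+\lambda_b\) on the \(n_b\), which gives \(\mathcal E_{2,2,2,2}\), \(\mathcal E_{1,1,2,2}\) and \(\mathcal E_{1,1,1,1}\).

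For the \(\mathcal N^-\) identities I would take the formula for \(\mathcal N^-_{r,j}(q^2)\) derived in the proof of Theorem~\ref{thm:nahm-fbar-hat}, in which \(\Theta^{\mathrm e}\) is kept, and substitute the values above.
\begin{itemize}
\item \(j=1\): the factor \(q^{1-\kappa^{\mathrm e}}\Theta^{\mathrm e}\) produces \(-\tfrac12 q^{-2}J_{3,9}\). Combined with \(q^{-2}(J_{3,9}-J_{2,9})\) this gives \(q^{-2}(\tfrac12J_{3,9}-J_{2,9})\).
\item \(j=2\): the boundary term exactly cancels the product quotient, since \(J_{1,9}-J_{0,9}=J_{1,9}\). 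This leaves only the \(\widehat f_4\) term.
\end{itemize}
The differences with \(q\mathcal N^+\) are then \eqref{eq:sub-eta-Bdual}.

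For the \(G\)-components I would apply Lemma~\ref{lem:Gfinite}. For each \(s\in\{1,-2,3,-4,5\}\) and \(k=0,1,2\), compute \(u^\pm_{s,k}=(\mp3s-6+36k)/3\) (with sign \(\mp\) on \(3s\) and the overall sign matching \eqref{eq:utplus}--\eqref{eq:utminus}) and \(t^\pm_{s,k}=(s+12k\pm\tfrac12)/9\). Each theta factor is converted by \eqref{eq:thetaJ} into \(e^{\pm\pi i t}q^{9t^2/2}J_{9(t+1/2),9}\). I would then reduce the index modulo \(9\) and use \(J_{a,9}=J_{9-a,9}\) and \(J_{a+9,9}=-q^{-a}J_{a,9}\) to express it as \(\pm\) a phase times some \(U_k\), and collect the resulting powers of \(\xi=\zeta_{36}\). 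Independently, \(R_{u/36,2}\) is reduced modulo \(\Z\) in its characteristic. This needs care: by the phase identity for \(R_{\alpha,m}\) stated after the definition of \(R\), a shift by an integer leaves \(R_{\alpha,2}\) unchanged only because \(\delta=2\) is integral. Terms with characteristic \(0\) or \(1/2\) are discarded, because \(\mathscr R_0=\mathscr R_{1/2}=0\).

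This bookkeeping is the main obstacle. There are thirty terms with phases, signs from the quasi-periodicity of \(J\), and coincidences where two terms share the same \(U_k\) and \(\mathscr R_a\) and must be combined, as in \(G_3^{(3)}\). I would organize it as a table of \((s,k,\pm)\) entries and check the result against the symmetry \(K_{-u}=-K_u\) and against the \(T\)-eigenvalues. In particular, \(e^{2\pi i(\cdot)}\) of each product's \(q\)-exponent must match \((T_3)_{\ell\ell}\).
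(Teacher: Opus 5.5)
Your plan matches the paper's argument: the corollary comes purely from specializing \eqref{eq:Tr}, \eqref{eq:Mr}, Lemma~\ref{lem:Gfinite} with \eqref{eq:thetaJ}, Proposition~\ref{prop:even-theta}, Theorem~\ref{thm:euler}, and the $\mathcal N^-_{r,j}(q^2)$ formula from the proof of Theorem~\ref{thm:nahm-fbar-hat}, and your exponents, boundary terms, and the $j=1,2$ cancellations for $\mathcal N^-_{3,j}$ are all correct. Two slips need fixing before you build the table: the correct parameter is $u^+_{s,k}=-(s+2+12k)$, and your displayed $(-3s-6+36k)/3$ has the wrong sign on the $k$-term, so it would mispair the $R$- and $\theta$-factors (for instance, giving $\xi^{21}U_3\mathscr R_{7/12}$ instead of $\xi^{21}U_3\mathscr R_{1/4}$ in $G_1^{(3)}$); and $R_{\alpha,\delta}$ is invariant under $\alpha\mapsto\alpha+1$ for every $\delta$ simply because it is a sum over the coset $\alpha+\Z$, not because $\delta=2$ is integral.
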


\subsection{Parity and exact coefficient checks}

The rank of an Euler sum is the number of summation variables
in its defining expression, and \(\lceil x\rceil\) denotes the least
integer not less than \(x\).

\begin{corollary}
\label{cor:parity}
Let \(R\in\Z_{>0}\).  If \(r=2R+1\), the vector has dimension
\(4R+1\).  Its odd components have \(1\leq j\leq2R+1\), Euler
rank \(4R\), and a nonzero theta correction precisely when
\(\lceil(2R+3)/3\rceil\leq j\leq2R\).  Its even components have
\(1\leq j\leq2R\), Nahm rank \(2R+1\), reduced Euler rank \(2R\),
and a nonzero theta correction precisely when
\(\lceil(2R+1)/3\rceil\leq j\leq2R\).

If \(r=2R\), the vector has dimension \(4R-1\).  Its odd
components have \(1\leq j\leq2R\), Euler rank \(4R-2\), and a
nonzero theta correction precisely when
\(\lceil(2R+2)/3\rceil\leq j\leq2R-1\).  Its even components have
\(1\leq j\leq2R-1\), Nahm rank \(2R\), reduced Euler rank
\(2R-1\), and a nonzero theta correction precisely when
\(\lceil2R/3\rceil\leq j\leq2R-1\).

In both cases, \(T_r\) and \(M_r\) are given by
\eqref{eq:Tr}--\eqref{eq:Mr}, and \eqref{eq:fHG} and
\eqref{eq:Gfinite} hold.  When \(3j=r\), the coefficient of the
even theta correction is half that in the case \(3j>r\).
\end{corollary}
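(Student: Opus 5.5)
The statement to prove is Corollary~\ref{cor:parity}. It is a bookkeeping consequence of Theorems~\ref{thm:nahm-fbar-hat}, \ref{thm:euler} and~\ref{thm:modular}, so the plan is to substitute \(r=2R+1\) and \(r=2R\) into the earlier formulas and read off each assertion.

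The dimension is \(2r-1\). This gives \(4R+1\) when \(r=2R+1\) and \(4R-1\) when \(r=2R\). The odd components are indexed by \(1\leq j\leq r\) and the even ones by \(1\leq j\leq r-1\), which gives the stated index ranges.

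For the ranks, I count summation variables.
\begin{itemize}
\item The odd Euler sum \(\Eodd_{r,j}\) in \eqref{eq:Eodd} has \(2r-2\) variables. This is \(4R\) when \(r=2R+1\) and \(4R-2\) when \(r=2R\).
\item The even component uses \(\mathcal N^+_{r,j}\), which is a rank-\(r\) sum by \eqref{eq:Npm}.
\item The reduced Euler sum \(\Eeven_{r,j}\) in \eqref{eq:Eeven} has \(r-1\) variables \(N_1,\ldots,N_{r-1}\), because the \(n_1\)-sum was removed by Euler's identity in Lemma~\ref{lem:evenBailey}.
\end{itemize}

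For the theta corrections, I read the nonvanishing ranges from \eqref{eq:oddtheta} and \eqref{eq:eventheta}, and this is the only step that needs care.

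In the odd case the correction is \(-2\zeta_{12}^{-5}q^{\lambda^{\mathrm o}_{r,j}}J_{2r-2j,4r-3}\) when \(3j\geq r+2\), and \(0\) otherwise. The product \(J_{a,m}=j(q^a;q^m)\) vanishes exactly when \(a\equiv0\pmod m\). Here \(0\leq 2r-2j\leq 2r-2<4r-3\), so it vanishes only at \(j=r\). The correction is therefore nonzero precisely for \(\lceil (r+2)/3\rceil\leq j\leq r-1\). Substituting gives \(\lceil(2R+3)/3\rceil\leq j\leq 2R\) for \(r=2R+1\), and \(\lceil(2R+2)/3\rceil\leq j\leq 2R-1\) for \(r=2R\).

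In the even case the correction is nonzero when \(3j\geq r\), provided \(J_{2r-2j-1,4r-3}\neq0\). Since \(1\leq 2r-2j-1\leq 2r-3<4r-3\), it never vanishes. So the range is \(\lceil r/3\rceil\leq j\leq r-1\), which becomes \(\lceil(2R+1)/3\rceil\leq j\leq 2R\) for \(r=2R+1\) and \(\lceil 2R/3\rceil\leq j\leq 2R-1\) for \(r=2R\).

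I would also check two small-\(R\) edge cases.
\begin{itemize}
\item For \(r=2\) (\(R=1\)), the odd range \(\lceil 4/3\rceil=2\leq j\leq1\) is empty. This matches Corollary~\ref{cor:r2}.
\item For \(R=1\), \(r=3\), the odd range is \(j=2\), matching \(\widehat f_3^{(3)}\) in Corollary~\ref{cor:r3}.
\end{itemize}

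The remaining claims need no new argument.
\begin{itemize}
\item The formulas for \(T_r\) and \(M_r\), together with \eqref{eq:fHG} and \eqref{eq:Gfinite}, hold for every \(r\geq2\) by Theorem~\ref{thm:modular} and Lemma~\ref{lem:Gfinite}, whose proofs use no parity restriction.
\item The halving when \(3j=r\) is the case distinction in \eqref{eq:eventheta}, where \(m_{r,j}=1\) versus \(m_{r,j}=2\). It can occur only when \(3\mid r\).
\end{itemize}
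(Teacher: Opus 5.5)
Your proposal is correct and follows essentially the same route as the paper. You substitute $r=2R+1$ and $r=2R$ into the dimension and rank counts, and you read the nonvanishing ranges off \eqref{eq:oddtheta} and \eqref{eq:eventheta}, using $J_{0,4r-3}=0$ at $j=r$ and $J_{a,4r-3}\neq0$ for $0<a<4r-3$. You then cite the parity-free proofs for $T_r$, $M_r$, \eqref{eq:fHG}, \eqref{eq:Gfinite}, and the halving at $3j=r$; your edge-case checks against Corollaries~\ref{cor:r2} and~\ref{cor:r3} are accurate.
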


Substituting \(r=2R+1\) and \(r=2R\) into the dimension and rank
formulas gives Corollary~\ref{cor:parity}. The ranges of nonzero
boundary corrections follow from \eqref{eq:oddtheta} and
\eqref{eq:eventheta}, together with \(J_{0,4r-3}=0\), which removes
the endpoint \(j=r\) in the odd case. All remaining Jacobi factors
in these ranges have first index strictly between zero and their
modulus, and hence are nonzero on \(\Hh\).
The formulas for \(T_r\), \(M_r\), and the finite nonholomorphic
correction have the same form for both parities.  The value of
\(\gamma_r\) depends only on whether \(3\mid r\); explicitly,
\[
 (d_r,\mathbf N_r,b_r)=
 \begin{cases}
  \bigl(4r-3,\,12r(4r-3),\,2r\bigr),&3\nmid r,\\[2pt]
  \bigl((4r-3)/3,\,4r(4r-3)/3,\,2r/3\bigr),&3\mid r.
 \end{cases}
\]

Corollaries~\ref{cor:r2} and~\ref{cor:r3} follow by specializing
\eqref{eq:Tr}, \eqref{eq:Mr}, \eqref{eq:Gfinite}, and
Theorems~\ref{thm:nahm-fbar-hat} and~\ref{thm:euler}.

As an independent finite check, we compared all $99$ component
identities in Theorems~\ref{thm:nahm-fbar-hat} and~\ref{thm:euler}
for $2\leq r\leq10$. After dividing the $\ell$th component by
$\zeta_{12}^{-5}q^{\ell^2/(8r)-1/24}$, both sides have integral
exponents and integer coefficients. The coefficients through
$q^{30}$ agree in all cases. The calculation includes every
boundary regime, the vanishing odd endpoint, and the half-weight
even cases $(r,j)=(3,1),(6,2),(9,3)$. These finite comparisons are
consistency checks, not steps in the proofs.

\section{Product evaluations and component representations}
\label{sec:proof-four-families}

The representations in Theorem~\ref{thm:four-families} are finite
linear combinations of
\begin{equation}
 \sum_{\mathbf n\in\N^r}
 \frac{\bigl(\prod_{a=1}^r\varepsilon_a^{n_a}\bigr)
       \mathfrak q^{\frac12\mathbf n^T\mathsf A
                    \operatorname{diag}(\boldsymbol d)\mathbf n
                    +\mathbf n^T\mathbf u+c}}
 {\prod_{a=1}^r(\mathfrak q^{d_a};
                         \mathfrak q^{d_a})_{n_a}},
 \qquad \mathbf u\in\Q^r,\quad c\in\Q,
 \label{eq:component-sum-class}
\end{equation}
where the matrix, index vector, and nome are those of the relevant
family. Here $\varepsilon_a=1$ for $A$ and $B$, while
$\varepsilon_a\in\{1,-1\}$ for $A^\vee$ and $B^\vee$.

The linear terms for the four matrix families are obtained by
raising and lowering the relative parameter in Bailey transformations.
All multiple sums in this section converge absolutely and locally
uniformly on \(\Hh\): the reciprocal Pochhammer factors are uniformly
bounded on compact subsets, independently of the summation indices,
and the positive definite quadratic terms dominate the linear terms.
These bounds justify the limits and rearrangements after the Bailey
transformations have been applied at finite indices.

The matrices $A,A^\vee$ and $B,B^\vee$ form two transpose pairs
with the prescribed symmetrizers. Their symmetric products satisfy
\[
 AD=BD-\frac12E_{rr},\qquad
 A^\vee D^\vee=B^\vee D^\vee-E_{rr},\qquad
 D^\vee=2D^{-1}.
\]
The corresponding quadratic form for $AD$ is
\[
 \frac12\mathbf x^TAD\mathbf x
 =\left(x_1+\frac{x_r}{2}\right)^2
 +\frac12\sum_{a=1}^{r-2}
       \left(2\sum_{b=a+1}^{r-1}x_b+x_r\right)^2
 +\frac14x_r^2.
\]
If the right-hand side vanishes, its last term gives $x_r=0$.
Working backwards through the sum gives $x_a=0$ for
$2\leq a\leq r-1$, and the first square then gives $x_1=0$.
Thus $AD$ is positive definite. Adding $x_r^2/4$ proves positive
definiteness for $BD$. The transpose families have the same
property by the congruences
\[
 A^\vee D^\vee=2D^{-1}(AD)D^{-1},\qquad
 B^\vee D^\vee=2D^{-1}(BD)D^{-1}.
\]

For the evaluations, define the linear coefficient vectors by
\begin{align*}
 (\mathbf b^A_{r,j})_a
 &=\begin{cases}
 0,&1\leq a\leq\min\{j,r-1\},\\
 2(a-j),&j+1\leq a\leq r-1,\\
 r-j,&a=r,
 \end{cases}
 &&1\leq j\leq r,\\
  \mathbf b^{A^\vee}_{r,j}
 &:={}(1,1,2,\ldots,j-1,j+1,j+3,\ldots,2r-j-1)^T,
 &&1\leq j\leq r,
 \\
 (\mathbf b^B_{r,j})_a
 &:={}
 \begin{cases}
  0,&1\leq a\leq j,\\
  2(a-j),&j+1\leq a\leq r-1,\\
  \dfrac{2r-2j-3}{2},&a=r,
 \end{cases}
 &&1\leq j\leq r-1,
 \\
 \mathbf b^{B^\vee}_{r,j}
 &:={}(1,1,2,\ldots,j-1,j+1,j+3,\ldots,
 2r-j-3,2r-j-2)^T,
 &&1\leq j\leq r-1.
\end{align*}
Thus the dual linear coefficients are integers. The auxiliary
$A$-identity uses one further vector:
\begin{align*}
 (\mathbf b^{A,0}_{r,j})_a
 &:={}
 \begin{cases}
  1,&a=1,\\
  0,&2\leq a\leq \min\{j,r-1\},\\
  2(a-j),&j+1\leq a\leq r-1,\\
  r-j+1,&a=r,
 \end{cases}
 &&(1\leq j\leq r).
\end{align*}

\subsection{Multi-sum Rogers--Ramanujan-type identities}

The identities in the next two theorems evaluate the nested sums
that arise after Euler summation and separation of the parity of the
last Nahm variable.

\begin{theorem}
\label{thm:bailey-euler-first}
For \(r\geq2\), the following identities hold in the
range $1\leq j\leq r$:
\begin{subequations}
\begin{align}
&\sum_{N_1\geq\cdots\geq N_{r-1}\geq0}
 \frac{q^{N_1^2+\cdots+N_{r-1}^2+N_j+\cdots+N_{r-1}}}
 {\displaystyle
 \prod_{a=1}^{r-2}(q;q)_{N_a-N_{a+1}}
  (q;q)_{N_{r-1}}(q;q^2)_{N_{r-1}}}
 \notag\\*
&\hspace{20mm}
 =\frac{J_{2r+2j,8r-2}-q^jJ_{2r-2j,8r-2}}{J_1},
 \label{eq:bailey-euler-a}
 \\
&\sum_{N_1\geq\cdots\geq N_{r-1}\geq0}
 \frac{q^{\displaystyle
  2N_1^2+\cdots+2N_{r-1}^2
  +2N_1+\cdots+2N_{j-1}
  +4N_j+\cdots+4N_{r-1}}}
 {\displaystyle
 \prod_{a=1}^{r-2}(q^2;q^2)_{N_a-N_{a+1}}
  (q^3;q^2)_{N_{r-1}}(q^4;q^4)_{N_{r-1}}}
 \notag\\*
&\hspace{20mm}
 =q^{2j-2r+1}(1-q)
 \frac{\widetilde J_{2r+2j,4r-1}
       -\widetilde J_{2r-2j,4r-1}}{J_2},
 \label{eq:bailey-euler-b}
 \\
&\sum_{N_1\geq\cdots\geq N_{r-1}\geq0}
 \frac{q^{\displaystyle
  N_1^2+\cdots+N_{r-1}^2
  +N_1+\cdots+N_{j-1}
  +2N_j+\cdots+2N_{r-1}+N_{r-1}}}
 {\displaystyle
 \prod_{a=1}^{r-2}(q;q)_{N_a-N_{a+1}}
  (q;q)_{N_{r-1}}(q^3;q^2)_{N_{r-1}}}
 \notag\\*
&\hspace{20mm}
 =(1-q)
 \frac{q^{j-r}J_{4r+2j,8r-2}+J_{2j,8r-2}
       -q^{j-r}J_{4r-2j,8r-2}}{J_1},
 \label{eq:bailey-euler-c}
 \\
&\sum_{N_1\geq\cdots\geq N_{r-1}\geq0}
 \frac{q^{\displaystyle
  2N_1^2+\cdots+2N_{r-1}^2
  +2N_j+\cdots+2N_{r-1}+2N_{r-1}}}
 {\displaystyle
 \prod_{a=1}^{r-2}(q^2;q^2)_{N_a-N_{a+1}}
  (q;q^2)_{N_{r-1}}(q^4;q^4)_{N_{r-1}}}
 \notag\\*
&\hspace{20mm}
 =\frac{\widetilde J_{4r-2-2j,4r-1}
       -q\widetilde J_{4r-2j,4r-1}
       +q\widetilde J_{2j,4r-1}}{J_2},
 \label{eq:bailey-euler-d}
 \\
&\sum_{N_1\geq\cdots\geq N_{r-1}\geq0}
 \frac{q^{\displaystyle
  N_1^2+\cdots+N_{r-1}^2
  +N_1+\cdots+N_{j-1}
  +2N_j+\cdots+2N_{r-1}}}
 {\displaystyle
  \prod_{a=1}^{r-2}(q;q)_{N_a-N_{a+1}}
  (q^2;q^2)_{N_{r-1}}(-q^{3/2};q)_{N_{r-1}}}
 \notag\\*
&\hspace{20mm}
 =(1+q^{1/2})q^{j-r+1/2}
 \frac{J_{r-j,2r-1/2}-J_{r+j,2r-1/2}}{J_1}.
 \label{eq:bailey-euler-e}
\end{align}
\end{subequations}
\end{theorem}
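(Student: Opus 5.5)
Each of the five identities will be proved by the Bailey-chain mechanism already used in Lemma~\ref{lem:evenBailey}. We recognise the left-hand side as $(q;q)_\infty\lim_{n\to\infty}\beta_n$ (or with base $q^2$) for a pair built from a Slater seed, and then evaluate $\sum_n\alpha_n$ as a Hecke-type or bilateral theta sum via Jacobi's triple product. The innermost denominator fixes the seed:
\begin{itemize}
\item for \eqref{eq:bailey-euler-a}, the factor $(q;q)_{N}(q;q^2)_{N}$ is $\beta_N$ of \eqref{eq:pair-C1} relative to $1$;
\item for \eqref{eq:bailey-euler-c}, the factor $(q;q)_N(q^3;q^2)_N$ with the extra $q^{N_{r-1}}$ is \eqref{eq:pair-C4} relative to $q$;
\item for \eqref{eq:bailey-euler-e}, the denominator $(q^2;q^2)_N(-q^{3/2};q)_N$ is \eqref{eq:pair-G2} relative to $q$;
\item for \eqref{eq:bailey-euler-b} and \eqref{eq:bailey-euler-d}, we use \eqref{eq:pair-C4} and \eqref{eq:pair-C1} (or \eqref{eq:pair-C5}) with $q\mapsto q^2$ and base $q^2$. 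The denominators $(q^3;q^2)_N$ and $(q;q^2)_N$ are exactly these pairs in base $q^2$.
\end{itemize}

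The chain is then assembled exactly as in the proof of Lemma~\ref{lem:evenBailey}. The linear coefficients are $2$ (respectively $1$) on the tail $N_j,\dots,N_{r-1}$ and $1$ (respectively $0$) on the head $N_1,\dots,N_{j-1}$. When the seed is relative to $q$ (cases (b), (c), (e)), we first apply \eqref{eq:bailey-raise} to reach relative parameter $q^2$, where each \eqref{eq:bailey-chain} step contributes $q^{N^2+2N}$. We apply $r-j-1$ such steps, then lower once by \eqref{eq:bailey-lower} back to relative parameter $q$, and finish with $j$ steps of \eqref{eq:bailey-chain} contributing $q^{N^2+N}$. Since \eqref{eq:bailey-lower} multiplies $\beta$ by $q^n$, this produces precisely the split exponent $\sum_{a<j}N_a+2\sum_{a\ge j}N_a$. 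For the seeds relative to $1$ (cases (a), (d)), the pattern $N_j+\cdots+N_{r-1}$ suggests instead applying \eqref{eq:bailey-chain} $r-1-j$ times relative to $1$, raising once via \eqref{eq:bailey-raise-infty} (which divides $\beta$ by $q^n$), and chaining the rest relative to $q$. We shall adjust the order so the linear terms match. At the end, \eqref{eq:bailey-limit} gives the left-hand side times $(q,aq;q)_\infty$ as $\sum_n\alpha_n^{\mathrm{final}}$, with convergence justified as in the earlier lemmas by Gaussian decay.

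Each final $\alpha_n$ has the shape $q^{rn^2+Ln}$ times either a single monomial $(-1)^nq^{3n^2/2+\cdots}$ (pairs C) or a finite theta sum over $t$ (after raising, as in \eqref{eq:even-raised-pair}), together with a lowered partner term in $n-1$. For the C-pairs, the $\alpha_n$ are monomials. Summing over $n\ge0$ and combining the $n$ and $-n$ parts (and the even/odd split $\alpha_{2n},\alpha_{2n+1}$ in \eqref{eq:pair-C1}, \eqref{eq:pair-C4}) should give bilateral sums $\sum_{k\in\Z}(-1)^kq^{(4r-1)k^2+\cdots}$. Jacobi's triple product then yields the $J_{\cdot,8r-2}$ or $\widetilde J_{\cdot,4r-1}$ differences. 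The $\widetilde J$ appear because the C1/C4 sign $(-1)^n$ combines with odd exponents in base $q^2$.

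For (e), the raised G2 pair carries an inner finite sum $\sum_{t=-n-1}^{n}(-1)^tq^{-(3t^2+t)/4}$-type. Here we will not evaluate a Hecke sum. Instead, as in the comparison of $\overline f^{[7]}_{r,j}$ with $\overline f_{r,j}$, we hope a telescoping/endpoint argument reduces the difference of the two chained pieces (the $n$-term and the lowered $(n-1)$-term) to endpoint contributions, which assemble into bilateral series with quadratic form $(r-\tfrac14)k^2$. That produces $J_{r\pm j,2r-1/2}$.

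The main obstacle is this last step for (e): showing that the positive-definite Hecke-type double sum actually collapses to a single theta quotient. I would first check whether \eqref{eq:pair-G2}, after \eqref{eq:bailey-raise}, has $\alpha$ already telescoping in the lowering step. If not, I would try the $b\to\infty$ raise \eqref{eq:bailey-raise-infty}, whose partial sums $\sum_{k\le n}\alpha_k$ combined with \eqref{eq:bailey-lower} telescope to two endpoint terms. Secondary bookkeeping risks are the boundary cases $j=r$ in (a)--(d), where no lowering occurs and the chain degenerates, and the prefactors $(1-q)$ and $(1+q^{1/2})$ coming from $(aq;q)_\infty$ with $a=q$. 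I would verify both against $r=2$ numerically.
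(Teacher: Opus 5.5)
You have the right scheme for \eqref{eq:bailey-euler-c} and \eqref{eq:bailey-euler-e}: raise to relative parameter $q^2$, take $r-j-1$ chain steps, lower once, take $j$ chain steps. The gap is in \eqref{eq:bailey-euler-b} and \eqref{eq:bailey-euler-d}, where the seeds you assign cannot work. With $q\mapsto q^2$, \eqref{eq:pair-C4} has $\beta_n=q^{2n}/((q^2;q^2)_n(q^6;q^4)_n)$, and \eqref{eq:pair-C1} and \eqref{eq:pair-C5} have $(q^2;q^2)_n(q^2;q^4)_n$ in the denominator. None of these is the innermost factor $(q^3;q^2)_N(q^4;q^4)_N$ or $(q;q^2)_N(q^4;q^4)_N$.

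The problem is structural. A chain built in base $q^2$ from these seeds yields a series in $q^2$ and Jacobi products of even modulus. The left sides of (b) and (d) contain odd powers of $q$, and the right sides contain $\widetilde J_{\cdot,4r-1}$ of odd modulus. So your explanation of the $\widetilde J$'s via the sign $(-1)^n$ does not hold. Writing $p=q^2$, the inner denominators are $(p^{3/2};p)_N(p^2;p^2)_N$ and $(p^{1/2};p)_N(p^2;p^2)_N$. These are G-type denominators with $p^{1/2}\mapsto -p^{1/2}$.

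The missing idea is therefore the substitution $q\mapsto q^2$ followed by $q\mapsto -q$.
\begin{itemize}
\item For (b), nothing else is needed: apply it to (e). The numerator exponent is even, $(-q^3;q^2)_N$ becomes $(q^3;q^2)_N$, $(1+q)q^{2j-2r+1}$ becomes $-(1-q)q^{2j-2r+1}$, and $J_{a,4r-1}$ with $a$ even becomes $\widetilde J_{a,4r-1}$.
\item For (d), you need a different seed. Lower \eqref{eq:pair-G1-star} by \eqref{eq:bailey-lower} with $a=q$; this gives a pair relative to $1$ with $\beta_n=q^n/((-q^{1/2};q)_n(q^2;q^2)_n)$. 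Then run the raise/chain/lower/chain scheme. The raise uses the partial sum $\sum_{k\le n}q^{-k^2}\alpha_k=(-1)^nq^{-n(n+3)/4}(1-q^{(n+1)/2}+q^{n+1/2})$. Sum the final $\alpha_n$ by Jacobi's triple product, and only then substitute $q\mapsto q^2$, $q\mapsto -q$.
\end{itemize}

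Two smaller corrections. For (e), the obstacle you anticipate does not arise. Unlike \eqref{eq:pair-G5}, the raising of \eqref{eq:pair-G2} by \eqref{eq:bailey-raise} with $a=q$ telescopes:
$\sum_{k=0}^nq^{-k^2-k}\alpha_k(q;q)=(-1)^nq^{-n(n+3)/4}(1-q^{(n+1)/2})/(1-q^{1/2})$.
Hence every later $\alpha_n$ is a closed four-monomial expression. Its sum pairs into two bilateral series giving $q^{j-r+1/2}(J_{r-j,2r-1/2}-J_{r+j,2r-1/2})$, and no Hecke-type double sum ever appears. You must prove this telescoping identity rather than leave it as a hope.

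For (a), the order you describe (chain relative to $1$, raise via \eqref{eq:bailey-raise-infty}, chain relative to $q$) produces the complementary linear part $N_1+\cdots+N_{j-1}$, not $N_j+\cdots+N_{r-1}$. The correct order mirrors (c):
\begin{itemize}
\item raise \eqref{eq:pair-C1} via \eqref{eq:bailey-raise}, using $\sum_{k\le 2n}q^{-k^2}\alpha_k(1;q)=(-1)^nq^{-n^2-n}$;
\item chain $r-j-1$ times relative to $q$;
\item lower back to $1$;
\item chain $j$ times relative to $1$.
\end{itemize}
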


\begin{proof}
We first prove \eqref{eq:bailey-euler-c} using \eqref{eq:pair-C4}.
The Bailey-pair notation refers to the pair under consideration.
For \(j=r\), applying \eqref{eq:bailey-chain} \(r-1\) times gives
a pair relative to \((q,q)\) with
\begin{align*}
 \alpha_{2n}^{(r-1)}(q;q)
  &=(-1)^nq^{(4r-1)n^2+(2r+1)n},\\
 \alpha_{2n+1}^{(r-1)}(q;q)
  &=(-1)^{n+1}q^{(4r-1)n^2+(6r-3)n+2r-2},\\
 \beta_n^{(r-1)}(q;q)
  &=\sum_{n\geq N_1\geq\cdots\geq N_{r-1}\geq0}
    \frac{
      q^{\sum_{a=1}^{r-1}N_a^2+\sum_{a=1}^{r-2}N_a+2N_{r-1}}}
    {\displaystyle
     (q;q)_{n-N_1}\prod_{a=1}^{r-2}(q;q)_{N_a-N_{a+1}}
     (q;q)_{N_{r-1}}(q^3;q^2)_{N_{r-1}}}.
\end{align*}
The bounds stated at the beginning of this section allow the limit
to pass through the nested sum. Using
\((q;q)_{n-N_1}\to(q;q)_\infty\) for fixed indices and
\eqref{eq:bailey-limit}, we obtain
\begin{align*}
 &\sum_{N_1\geq\cdots\geq N_{r-1}\geq0}
 \frac{q^{N_1^2+\cdots+N_{r-1}^2+
              N_1+\cdots+N_{r-2}+2N_{r-1}}}
 {\displaystyle
  \prod_{a=1}^{r-2}(q;q)_{N_a-N_{a+1}}
  (q;q)_{N_{r-1}}(q^3;q^2)_{N_{r-1}}}
 \\
 &\quad=\frac{1}{(q^2;q)_\infty}
 \left(
  \sum_{n=0}^{\infty}(-1)^n
   q^{(4r-1)n^2+(2r+1)n}
  +\sum_{n=0}^{\infty}(-1)^{n+1}
   q^{(4r-1)n^2+(6r-3)n+2r-2}
 \right)
 \\
 &\quad=\frac{1-q}{(q;q)_\infty}
  \sum_{n\in\Z}(-1)^n
   q^{(4r-1)n^2+(2r+1)n}\\
 &\quad
 =\frac{(1-q)J_{6r,8r-2}}{J_1}.
\end{align*}
At \(j=r\), the two \(J_{2r,8r-2}\)-terms on the right-hand side
of \eqref{eq:bailey-euler-c} cancel, leaving precisely this product.
This proves the endpoint case.

For \(1\leq j\leq r-1\), first raise the relative parameter of
\eqref{eq:pair-C4} from \(q\) to \(q^2\) by
\eqref{eq:bailey-raise}. The resulting pair is
\begin{equation}
\begin{aligned}
 \alpha_{2n}(q^2;q)
  &=\frac{1-q^{4n+2}}{1-q^2}(-1)^nq^{3n^2+n},\\
 \alpha_{2n+1}(q^2;q)
  &=\frac{1-q^{4n+4}}{1-q^2}(-1)^nq^{3n^2+3n}
    (q^{2n+2}-1),\\
 \beta_n(q^2;q)
  &=\frac{q^n}{(q;q)_n(q^3;q^2)_n}.
\end{aligned}
\label{eq:C4-raised-pair}
\end{equation}
Applying \eqref{eq:bailey-chain} \(r-j-1\) times relative to
\((q^2,q)\), starting from \eqref{eq:C4-raised-pair}, yields
\begin{equation}
\begin{aligned}
 \alpha_{2n}^{(r-j-1)}(q^2;q)
  &=\frac{1-q^{4n+2}}{1-q^2}(-1)^n
    q^{(4r-4j-1)n^2+(4r-4j-3)n},\\
 \alpha_{2n+1}^{(r-j-1)}(q^2;q)
  &=\frac{1-q^{4n+4}}{1-q^2}(-1)^n
    q^{(4r-4j-1)n^2+(8r-8j-5)n+3(r-j-1)}
    (q^{2n+2}-1),\\
 \beta_n^{(r-j-1)}(q^2;q)
  &=\sum_{n\geq N_{j+1}\geq\cdots\geq N_{r-1}\geq0}
    \frac{q^{N_{j+1}^2+\cdots+N_{r-1}^2+
                   2N_{j+1}+\cdots+2N_{r-1}+N_{r-1}}}
    {\displaystyle
     (q;q)_{n-N_{j+1}}
     \prod_{a=j+1}^{r-2}(q;q)_{N_a-N_{a+1}}
     (q;q)_{N_{r-1}}(q^3;q^2)_{N_{r-1}}}.
\end{aligned}
\label{eq:C4-chained-pair}
\end{equation}
When \(j=r-1\), there are no chain steps. In that case, the
nested \(\beta\)-sum in \eqref{eq:C4-chained-pair} means the seed
\(\beta_n(q^2;q)\) in \eqref{eq:C4-raised-pair}; it has no free
summation indices.

The parameter-lowering transformation \eqref{eq:bailey-lower}, with
\(a=q^2\), now gives a Bailey pair relative to \((q,q)\):
\begin{equation}
\begin{aligned}
 \alpha_{2n}^{(r-j)}(q;q)
  &=(-1)^nq^{(4r-4j-1)n^2}
    \left(q^{(4r-4j-1)n}+q^{n-r+j}-q^{-n-r+j}\right),\\
 \alpha_{2n+1}^{(r-j)}(q;q)
  &=(-1)^nq^{(4r-4j-1)n^2}
    \left(q^{(8r-8j-1)n+3r-3j}
          -q^{(8r-8j-3)n+3r-3j-2}
          -q^{(4r-4j-1)n}\right),\\
 \beta_n^{(r-j)}(q;q)
  &=q^n\sum_{n\geq N_{j+1}\geq\cdots\geq N_{r-1}\geq0}
    \frac{q^{N_{j+1}^2+\cdots+N_{r-1}^2+
                   2N_{j+1}+\cdots+2N_{r-1}+N_{r-1}}}
    {\displaystyle
     (q;q)_{n-N_{j+1}}
     \prod_{a=j+1}^{r-2}(q;q)_{N_a-N_{a+1}}
     (q;q)_{N_{r-1}}(q^3;q^2)_{N_{r-1}}}.
\end{aligned}
\label{eq:C4-lowered-pair}
\end{equation}
The endpoint convention following \eqref{eq:C4-chained-pair}
also applies to the \(\beta\)-sequence in \eqref{eq:C4-lowered-pair}.

Finally, applying \eqref{eq:bailey-chain} \(j\) times relative to
\((q,q)\), starting from \eqref{eq:C4-lowered-pair}, gives the \(\alpha\)-sequence
\begin{equation*}
\begin{aligned}
 \alpha_{2n}^{(r)}(q;q)
  &=(-1)^nq^{(4r-1)n^2+2jn}
    \left(q^{(4r-4j-1)n}+q^{n-r+j}-q^{-n-r+j}\right),\\
 \alpha_{2n+1}^{(r)}(q;q)
  &=(-1)^nq^{(4r-1)n^2+6jn+2j}
    \left(q^{(8r-8j-1)n+3r-3j}
          -q^{(8r-8j-3)n+3r-3j-2}
          -q^{(4r-4j-1)n}\right).
\end{aligned}
\end{equation*}
The corresponding \(\beta\)-sequence is
\begin{equation*}
\begin{aligned}
 \beta_n^{(r)}(q;q)
  &=\sum_{n\geq N_1\geq\cdots\geq N_{r-1}\geq0}
    \frac{q^{\sum_{a=1}^{r-1}N_a^2+\sum_{a=1}^{j-1}N_a+2\sum_{a=j}^{r-1}N_a+N_{r-1}}}
    {\displaystyle
     (q;q)_{n-N_1}\prod_{a=1}^{r-2}(q;q)_{N_a-N_{a+1}}
     (q;q)_{N_{r-1}}(q^3;q^2)_{N_{r-1}}}.
\end{aligned}
\end{equation*}
Applying \eqref{eq:bailey-limit}, and pairing the even- and
odd-indexed \(\alpha\)-terms into three bilateral series, gives
\begin{align*}
 &\sum_{N_1\geq\cdots\geq N_{r-1}\geq0}
 \frac{q^{N_1^2+\cdots+N_{r-1}^2+
              N_1+\cdots+N_{j-1}+
              2N_j+\cdots+2N_{r-1}+N_{r-1}}}
 {\displaystyle
  \prod_{a=1}^{r-2}(q;q)_{N_a-N_{a+1}}
  (q;q)_{N_{r-1}}(q^3;q^2)_{N_{r-1}}}
 \\
 &\quad=\frac{1}{(q^2;q)_\infty}
 \Biggl(
   \sum_{n\in\Z}(-1)^n
    q^{(4r-1)n^2+(4r-2j-1)n}
  +\sum_{n\in\Z}(-1)^n
    q^{(4r-1)n^2+(2j+1)n-r+j}
 \\
 &\hspace{52mm}
  +\sum_{n\in\Z}(-1)^n
    q^{(4r-1)n^2+(8r-2j-1)n+3r-j}
 \Biggr)
 \\
 &\quad=\frac{1-q}{(q;q)_\infty}
 \Bigl\{
 q^{j-r}(q^{4r+2j},q^{4r-2j-2},q^{8r-2};q^{8r-2})_\infty
 \\
 &\hspace{33mm}
 +(q^{2j},q^{8r-2j-2},q^{8r-2};q^{8r-2})_\infty
-q^{j-r}(q^{4r-2j},q^{4r+2j-2},q^{8r-2};q^{8r-2})_\infty
 \Bigr\}
 \\
 &\quad=(1-q)
 \frac{q^{j-r}J_{4r+2j,8r-2}+J_{2j,8r-2}
       -q^{j-r}J_{4r-2j,8r-2}}{J_1}.
\end{align*}
Together with the endpoint calculation, this proves
\eqref{eq:bailey-euler-c} throughout its stated range.

For \eqref{eq:bailey-euler-a}, start with \eqref{eq:pair-C1}.
At \(j=r\), applying
\eqref{eq:bailey-chain} \(r-1\) times relative to \((1,q)\) gives
\[
 \alpha_0^{(r,r)}=1,\qquad
 \alpha_{2n}^{(r,r)}
 =(-1)^nq^{(4r-1)n^2}(q^n+q^{-n})\quad(n\geq1),
 \qquad
 \alpha_{2n+1}^{(r,r)}=0,
\]
and
\[
 \beta_n^{(r,r)}
 =
 \sum_{n\geq N_1\geq\cdots\geq N_{r-1}\geq0}
 \frac{q^{N_1^2+\cdots+N_{r-1}^2}}
 {(q;q)_{n-N_1}
  \displaystyle\prod_{a=1}^{r-2}(q;q)_{N_a-N_{a+1}}
  (q;q)_{N_{r-1}}(q;q^2)_{N_{r-1}}}.
\]
Consequently,
\[
 \sum_{n=0}^{\infty}\alpha_n^{(r,r)}
 =
 \sum_{n\in\Z}(-1)^nq^{(4r-1)n^2+n}
 =J_{4r,8r-2}.
\]
Since \(J_{0,8r-2}=j(1;q^{8r-2})=0\), the case \(j=r\)
follows from \eqref{eq:bailey-limit}.

For \(1\leq j\leq r-1\), define \(t:=r-j\).  The finite sum
needed in the parameter-raising transformation is
\[
 \sum_{k=0}^{2n}q^{-k^2}\alpha_k(1;q)
 =
 \sum_{k=0}^{2n+1}q^{-k^2}\alpha_k(1;q)
 =(-1)^nq^{-n^2-n}.
\]
The odd-indexed terms vanish, and comparison of successive even
partial sums proves the identity by induction. Hence \eqref{eq:bailey-raise} with
\(a=1\) gives a pair relative to \((q,q)\), with the same
\(\beta\)-sequence and \(\alpha\)-sequence
\[
 \alpha_{2n}^{[0]}
 =\frac{1-q^{4n+1}}{1-q}(-1)^nq^{3n^2-n},
 \qquad
 \alpha_{2n+1}^{[0]}
 =\frac{1-q^{4n+3}}{1-q}(-1)^nq^{3n^2+3n+1}.
\]
After \(t-1=r-j-1\) applications of \eqref{eq:bailey-chain}
relative to \((q,q)\), we obtain
\[
 \alpha_{2n}^{[t-1]}
 =\frac{1-q^{4n+1}}{1-q}(-1)^n
 q^{(4t-1)n^2+(2t-3)n},
\]
\[
 \alpha_{2n+1}^{[t-1]}
 =\frac{1-q^{4n+3}}{1-q}(-1)^n
 q^{(4t-1)n^2+(6t-3)n+2t-1},
\]
and, for \(j\leq r-2\), the corresponding \(\beta\)-sequence is
\[
 \beta_n^{[t-1]}
 =
 \sum_{n\geq N_{j+1}\geq\cdots\geq N_{r-1}\geq0}
 \frac{q^{N_{j+1}^2+\cdots+N_{r-1}^2
              +N_{j+1}+\cdots+N_{r-1}}}
 {(q;q)_{n-N_{j+1}}
  \displaystyle\prod_{a=j+1}^{r-2}(q;q)_{N_a-N_{a+1}}
  (q;q)_{N_{r-1}}(q;q^2)_{N_{r-1}}}.
\]
When \(j=r-1\), no Bailey-chain step is applied, and the
\(\beta\)-sequence is \(\beta_n(1;q)\) from \eqref{eq:pair-C1}.

The parameter-lowering transformation \eqref{eq:bailey-lower}
with \(a=q\) then gives a pair relative to \((1,q)\) with
\[
 \alpha_0^{[t]}=1,
\]
\[
 \alpha_{2n}^{[t]}
 =(-1)^nq^{(4t-1)n^2}
 \left(q^{(2t-1)n}+q^{-(2t-1)n}\right)
 \qquad(n\geq1),
\]
\[
 \alpha_{2n+1}^{[t]}
 =(-1)^n
 \left(
  q^{(4t-1)n^2+(6t-1)n+2t}
  -q^{(4t-1)n^2+(2t-1)n}
 \right),
\]
while \(\beta_n^{[t]}=q^n\beta_n^{[t-1]}\).

Finally, \(j\) applications of \eqref{eq:bailey-chain} relative to
\((1,q)\) give an \(\alpha\)-sequence with
\(\alpha_0^{(r,j)}=1\) and remaining terms
\[
 \alpha_{2n}^{(r,j)}
 =(-1)^nq^{(4r-1)n^2}
 \left(
  q^{(2r-2j-1)n}+q^{-(2r-2j-1)n}
 \right)
 \qquad(n\geq1),
\]
\[
 \alpha_{2n+1}^{(r,j)}
 =(-1)^n
 \left(
 q^{(4r-1)n^2+(6r-2j-1)n+2r-j}
 -q^{(4r-1)n^2+(2r+2j-1)n+j}
 \right).
\]
The \(\beta\)-sequence is
\[
 \beta_n^{(r,j)}
 =
 \sum_{n\geq N_1\geq\cdots\geq N_{r-1}\geq0}
 \frac{
 q^{N_1^2+\cdots+N_{r-1}^2+N_j+\cdots+N_{r-1}}}
 {(q;q)_{n-N_1}
  \displaystyle\prod_{a=1}^{r-2}(q;q)_{N_a-N_{a+1}}
  (q;q)_{N_{r-1}}(q;q^2)_{N_{r-1}}}.
\]

Combining the even- and odd-indexed terms gives two bilateral
series, which we evaluate by Jacobi's triple product identity:
\begin{align*}
 \sum_{n=0}^{\infty}\alpha_n^{(r,j)}
 &=
 \sum_{n\in\Z}(-1)^n
 q^{(4r-1)n^2-(2r-2j-1)n}
 -q^j\sum_{n\in\Z}(-1)^n
 q^{(4r-1)n^2-(2r+2j-1)n}
 \\
 &=J_{2r+2j,8r-2}-q^jJ_{2r-2j,8r-2}.
\end{align*}
The even-indexed terms give the first bilateral series. For the
odd-indexed terms, shifting the index in the first summand by one
produces the positive-index part of the second series; the other
summand supplies its nonpositive-index part. Passing to the limit in
the \(\beta\)-sum gives
\[
 \lim_{n\to\infty}\beta_n^{(r,j)}
 =
 \frac{1}{J_1}
 \sum_{N_1\geq\cdots\geq N_{r-1}\geq0}
 \frac{q^{N_1^2+\cdots+N_{r-1}^2+N_j+\cdots+N_{r-1}}}
 {\displaystyle
  \prod_{a=1}^{r-2}(q;q)_{N_a-N_{a+1}}
  (q;q)_{N_{r-1}}(q;q^2)_{N_{r-1}}}.
\]
Applying \eqref{eq:bailey-limit} with relative parameter \(1\)
proves \eqref{eq:bailey-euler-a}. In the notation fixed in Section~1,
the two products are
\[
 J_{2r+2j,8r-2}
 =(q^{2r+2j},q^{6r-2j-2},q^{8r-2};q^{8r-2})_\infty
\]
and
\[
 J_{2r-2j,8r-2}
 =(q^{2r-2j},q^{6r+2j-2},q^{8r-2};q^{8r-2})_\infty.
\]

For \eqref{eq:bailey-euler-e}, use \eqref{eq:pair-G2}.
When \(j=r\), applying \eqref{eq:bailey-chain}
\(r-1\) times relative to \((q,q)\) gives
\[
 \alpha_n^{(r,r)}
 =(-1)^nq^{(r-1/4)n^2+(r-3/4)n}
 \frac{1-q^{n+1/2}}{1-q^{1/2}},
\]
and
\[
 \beta_n^{(r,r)}
 =
 \sum_{n\geq N_1\geq\cdots\geq N_{r-1}\geq0}
 \frac{q^{N_1^2+\cdots+N_{r-1}^2
              +N_1+\cdots+N_{r-1}}}
 {\displaystyle
  (q;q)_{n-N_1}\prod_{a=1}^{r-2}(q;q)_{N_a-N_{a+1}}
  (-q^{3/2};q)_{N_{r-1}}(q^2;q^2)_{N_{r-1}}}.
\]
Expanding \(1-q^{n+1/2}\), reindexing the first unilateral
series, and using \(J_{0,2r-1/2}=0\) gives
\[
 \sum_{n=0}^{\infty}\alpha_n^{(r,r)}
 =\frac{q^{1/2}}{1-q^{1/2}}
 \left(J_{0,2r-1/2}-J_{2r,2r-1/2}\right).
\]
The case \(j=r\) now follows from \eqref{eq:bailey-limit}.

For \(1\leq j\leq r-1\), let \(t=r-j\). The finite sum in the
raising transformation telescopes to
\[
 \sum_{k=0}^{n}q^{-k^2-k}\alpha_k(q;q)
 =(-1)^nq^{-n(n+3)/4}
 \frac{1-q^{(n+1)/2}}{1-q^{1/2}}.
\]
Thus \eqref{eq:bailey-raise} with \(a=q\) gives a pair relative
to \((q^2,q)\), with \(\beta\)-sequence \(\beta_n(q;q)\) from
\eqref{eq:pair-G2} and
\[
 \alpha_n^{[0]}
 =(-1)^n
 \frac{1-q^{2n+2}}{1-q^2}
 \frac{1-q^{(n+1)/2}}{1-q^{1/2}}
 q^{(3n^2+n)/4}.
\]
After \(t-1=r-j-1\) applications of \eqref{eq:bailey-chain}
relative to \((q^2,q)\), we obtain
\[
 \alpha_n^{[t-1]}
 =(-1)^n
 \frac{1-q^{2n+2}}{1-q^2}
 \frac{1-q^{(n+1)/2}}{1-q^{1/2}}
 q^{(t-1/4)n^2+(2t-7/4)n},
\]
and, for \(j\leq r-2\), the corresponding \(\beta\)-sequence is
\[
 \beta_n^{[t-1]}
 =
 \sum_{n\geq N_{j+1}\geq\cdots\geq N_{r-1}\geq0}
 \frac{q^{N_{j+1}^2+\cdots+N_{r-1}^2
              +2N_{j+1}+\cdots+2N_{r-1}}}
 {(q;q)_{n-N_{j+1}}
  \displaystyle\prod_{a=j+1}^{r-2}(q;q)_{N_a-N_{a+1}}
  (-q^{3/2};q)_{N_{r-1}}(q^2;q^2)_{N_{r-1}}}.
\]
When \(j=r-1\), no Bailey-chain step is applied, so the
\(\beta\)-sequence remains \(\beta_n(q;q)\) from \eqref{eq:pair-G2}.

Applying \eqref{eq:bailey-lower} with \(a=q^2\) then gives a
pair relative to \((q,q)\), with \(\beta\)-sequence
\(q^n\beta_n^{[t-1]}\) and \(\alpha\)-sequence
\[
 \alpha_n^{[t]}
 =\frac{(-1)^n}{1-q^{1/2}}
 \left\{
 q^{(t-1/4)n^2+(2t-3/4)n}(1-q^{(n+1)/2})
 +q^{(t-1/4)n^2-n/4-t+1/2}(1-q^{n/2})
 \right\}.
\]
This formula includes \(n=0\): its second summand vanishes and
\(\alpha_0^{[t]}=1\). Applying
\eqref{eq:bailey-chain} \(j\) times relative to \((q,q)\) yields
\[
 \alpha_n^{(r,j)}
 =\frac{(-1)^n}{1-q^{1/2}}
 \left\{
 q^{(r-1/4)n^2+(2r-j-3/4)n}(1-q^{(n+1)/2})
 +q^{(r-1/4)n^2+(j-1/4)n+j-r+1/2}(1-q^{n/2})
 \right\},
\]
and
\[
 \beta_n^{(r,j)}
 =
 \sum_{n\geq N_1\geq\cdots\geq N_{r-1}\geq0}
 \frac{q^{\displaystyle
  N_1^2+\cdots+N_{r-1}^2
  +N_1+\cdots+N_{j-1}
  +2N_j+\cdots+2N_{r-1}}}
 {\displaystyle
  (q;q)_{n-N_1}\prod_{a=1}^{r-2}(q;q)_{N_a-N_{a+1}}
  (-q^{3/2};q)_{N_{r-1}}(q^2;q^2)_{N_{r-1}}}.
\]
Expanding the factors \(1-q^{(n+1)/2}\) and \(1-q^{n/2}\) gives
\begin{align*}
 &(1-q^{1/2})\sum_{n=0}^{\infty}\alpha_n^{(r,j)}
 \\
 &=\sum_{n=0}^{\infty}(-1)^n
 \bigl\{
 q^{(r-1/4)n^2+(2r-j-3/4)n}
 -q^{(r-1/4)n^2+(2r-j-1/4)n+1/2}
 \\
 &\hspace{34mm}
 +q^{(r-1/4)n^2+(j-1/4)n+j-r+1/2}
 -q^{(r-1/4)n^2+(j+1/4)n+j-r+1/2}
 \bigr\}
 \\
 &=q^{j-r+1/2}
 \left(J_{r-j,2r-1/2}-J_{r+j,2r-1/2}\right).
\end{align*}
In the displayed order, the third and second unilateral series
supply the nonpositive and positive indices of
\(q^{j-r+1/2}J_{r-j,2r-1/2}\), respectively. The fourth and first
supply the nonnegative and negative indices of
\(-q^{j-r+1/2}J_{r+j,2r-1/2}\), respectively.

Passing to the limit as \(n\to\infty\) and applying
\eqref{eq:bailey-limit}, together with
\((q^2;q)_\infty=J_1/(1-q)\), we obtain
\begin{align*}
&\sum_{N_1\geq\cdots\geq N_{r-1}\geq0}
 \frac{q^{\displaystyle
  N_1^2+\cdots+N_{r-1}^2
  +N_1+\cdots+N_{j-1}
  +2N_j+\cdots+2N_{r-1}}}
 {\displaystyle
  \prod_{a=1}^{r-2}(q;q)_{N_a-N_{a+1}}
  (-q^{3/2};q)_{N_{r-1}}(q^2;q^2)_{N_{r-1}}}
 \\
&\hspace{15mm}=
 (1+q^{1/2})q^{j-r+1/2}
 \frac{J_{r-j,2r-1/2}-J_{r+j,2r-1/2}}{J_1}.
\end{align*}
This establishes \eqref{eq:bailey-euler-e} in the remaining range
and completes its proof.

To deduce \eqref{eq:bailey-euler-b}, replace \(q\) by \(q^2\) in
\eqref{eq:bailey-euler-e}.  This gives
\begin{align*}
&\sum_{N_1\geq\cdots\geq N_{r-1}\geq0}
 \frac{q^{\displaystyle
  2N_1^2+\cdots+2N_{r-1}^2
  +2N_1+\cdots+2N_{j-1}
  +4N_j+\cdots+4N_{r-1}}}
 {\displaystyle
  \prod_{a=1}^{r-2}(q^2;q^2)_{N_a-N_{a+1}}
  (-q^3;q^2)_{N_{r-1}}(q^4;q^4)_{N_{r-1}}}
 \\
&\hspace{15mm}
 =(1+q)q^{2j-2r+1}
 \frac{J_{2r-2j,4r-1}-J_{2r+2j,4r-1}}{J_2}.
\end{align*}
All exponents are now integral. Replacing \(q\) by \(-q\)
leaves the numerator on the left unchanged, since its exponent is
even, while \((-q)^2=q^2\) and
\[
 (-(-q)^3;(-q)^2)_n=(q^3;q^2)_n.
\]
Since \(2j-2r+1\) is odd, the monomial factor changes sign.
The two Jacobi products have even first indices and odd modulus
\(4r-1\), and therefore
\[
 j((-q)^a;(-q)^{4r-1})=\widetilde J_{a,4r-1}
 \qquad(a\ \text{even}).
\]
We therefore obtain
\begin{align*}
&\sum_{N_1\geq\cdots\geq N_{r-1}\geq0}
 \frac{q^{\displaystyle
  2N_1^2+\cdots+2N_{r-1}^2
  +2N_1+\cdots+2N_{j-1}
  +4N_j+\cdots+4N_{r-1}}}
 {\displaystyle
  \prod_{a=1}^{r-2}(q^2;q^2)_{N_a-N_{a+1}}
  (q^3;q^2)_{N_{r-1}}(q^4;q^4)_{N_{r-1}}}
 \\
&\hspace{15mm}
 =q^{2j-2r+1}(1-q)
 \frac{\widetilde J_{2r+2j,4r-1}
       -\widetilde J_{2r-2j,4r-1}}{J_2},
\end{align*}
which is \eqref{eq:bailey-euler-b}.

It remains to prove \eqref{eq:bailey-euler-d}. Applying
Lemma~\ref{lem:bailey-lower} with \(a=q\) to the Bailey pair
\eqref{eq:pair-G1-star} gives a pair relative to \((1,q)\) with
\(\alpha_0^{(0)}(1;q)=1\) and
\begin{align*}
 \alpha_n^{(0)}(1;q)
 &=(-1)^n\left(q^{3n(n+1)/4}+q^{3n(n-1)/4}\right)
 \qquad(n\geq1),\\
 \beta_n^{(0)}(1;q)
 &=\frac{q^n}{(-q^{1/2};q)_n(q^2;q^2)_n}
 \qquad(n\geq0).
\end{align*}
The finite identity
\[
 \sum_{k=0}^{n}q^{-k^2}\alpha_k^{(0)}(1;q)
 =(-1)^nq^{-n(n+3)/4}
  \left(1-q^{(n+1)/2}+q^{n+1/2}\right)
\]
follows from the value at \(n=0\) and equality of successive
differences.

For \(j=r\), apply \eqref{eq:bailey-chain} \(r-1\) times
relative to \((1,q)\). For \(1\leq j\leq r-1\), apply
Lemma~\ref{lem:bailey-raise} with \(a=1\) and pass to the limit
\(b\to0\), using the finite identity above to evaluate the sum.
Then apply \eqref{eq:bailey-chain} \(r-j-1\) times relative to
\((q,q)\), followed by Lemma~\ref{lem:bailey-lower} with \(a=q\)
and, finally, \eqref{eq:bailey-chain} \(j\) times relative to \((1,q)\).
In both cases, the resulting pair has \(\alpha_0^{(r,j)}(1;q)=1\)
and, for \(n\geq1\),
\begin{align*}
 \alpha_n^{(r,j)}(1;q)
 ={}&(-1)^nq^{(r-1/4)n^2}
 \Bigl\{q^{(r-j-3/4)n}+q^{-(r-j-3/4)n}\\
 &\quad-q^{1/2}\bigl(q^{(r-j-1/4)n}+q^{-(r-j-1/4)n}\bigr)\\
 &\quad+q^{1/2}\bigl(q^{(r-j+1/4)n}+q^{-(r-j+1/4)n}\bigr)\Bigr\}.
\end{align*}
When \(j=r\), the last two pairs of terms cancel.  Combining
opposite indices, applying Jacobi's triple product identity, and
using \(J_{a,m}=J_{m-a,m}\) gives
\[
 \sum_{n=0}^{\infty}\alpha_n^{(r,j)}(1;q)
 =J_{2r-j-1,2r-1/2}-q^{1/2}J_{j,2r-1/2}
  +q^{1/2}J_{2r-j,2r-1/2}.
\]
The \(\beta\)-sequence obtained from these transformations,
together with \eqref{eq:bailey-limit}, therefore yields
\begin{align*}
 &\sum_{N_1\geq\cdots\geq N_{r-1}\geq0}
 \frac{q^{N_1^2+\cdots+N_{r-1}^2+N_j+\cdots+N_{r-1}+N_{r-1}}}
 {\displaystyle
  \prod_{a=1}^{r-2}(q;q)_{N_a-N_{a+1}}
  (-q^{1/2};q)_{N_{r-1}}(q^2;q^2)_{N_{r-1}}}\\
 &\qquad=
 \frac{J_{2r-j-1,2r-1/2}-q^{1/2}J_{j,2r-1/2}
       +q^{1/2}J_{2r-j,2r-1/2}}{J_1}.
\end{align*}
Replace \(q\) by \(q^2\), and then replace \(q\) in the resulting
identity by \(-q\). After the first substitution, the numerator
exponent is even and the Jacobi products have even first indices and
odd modulus \(4r-1\). The second substitution gives the corresponding
\(\widetilde J\)-products and changes the original factors
\(q^{1/2}\) to \(-q\). This proves \eqref{eq:bailey-euler-d} and
completes the theorem.
\end{proof}

The next two identities are the counterparts needed for the
\(B^\vee\)- and \(B\)-families. The first follows from the
cancellation between the two signed branches; the second uses the
pair \eqref{eq:pair-C5}.

\begin{theorem}
\label{thm:bailey-euler-second}
For \(r\geq2\) and \(1\leq j\leq r-1\),
\begin{subequations}
\begin{align}
&\sum_{N_1\geq\cdots\geq N_{r-1}\geq0}\!
 \frac{(-1)^{N_{r-1}}q^{\displaystyle
  N_1^2+\cdots+N_{r-1}^2
  +N_1+\cdots+N_{j-1}
 +2N_j+\cdots+2N_{r-2}+N_{r-1}
  +\frac12N_{r-1}^2}}
 {\displaystyle
  \prod_{a=1}^{r-2}(q;q)_{N_a-N_{a+1}}
  (q^2;q^2)_{N_{r-1}}(-q^{3/2};q)_{N_{r-1}}}
 \notag\\*
&\hspace{15mm}=(1+q^{1/2})q^{j-r+1}
 \frac{J_{r-j-1/2,2r-3/2}-J_{r-j-1,2r-3/2}}{J_1},
 \label{eq:bailey-euler-f}
 \\
&\sum_{N_1\geq\cdots\geq N_{r-1}\geq0}\!
 \frac{q^{\displaystyle
  N_1^2+\cdots+N_{r-1}^2+N_j+\cdots+N_{r-2}
  +\binom{N_{r-1}}2}}
 {\displaystyle
  \prod_{a=1}^{r-2}(q;q)_{N_a-N_{a+1}}
  (q;q^2)_{N_{r-1}}(q;q)_{N_{r-1}}}
 \notag\\*
&\hspace{14mm}=
 \frac{J_{2r+2j-2,8r-6}-q^jJ_{2r-2j-2,8r-6}}{J_1}.
 \label{eq:bailey-euler-g}
\end{align}
\end{subequations}
\end{theorem}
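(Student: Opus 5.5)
For \eqref{eq:bailey-euler-f} I will not run a new Bailey construction. The left side is, after the substitution \(q\mapsto q\), exactly the series \(\mathcal E^-_{r,j}(q)\) minus \(q^{1/2}\) times \(\Eeven_{r,j}(q)\), once the denominators are matched. More precisely, \(\mathcal E^-_{r,j}\) has exponent \(\cdots+2\sum_{a=j}^{r-1}N_a-N_{r-1}\) with denominator \((-q^{1/2};q)_{N_{r-1}}\). The left side of \eqref{eq:bailey-euler-f} has exponent \(\cdots+2\sum_{a=j}^{r-2}N_a+N_{r-1}\), which is the same linear term. Its denominator is \((-q^{3/2};q)_{N_{r-1}}\).

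I will use
\[
\frac{1}{(-q^{1/2};q)_{N}}=\frac{1+q^{N+1/2}}{(1+q^{1/2})(-q^{3/2};q)_{N}}.
\]
This gives
\[
(1+q^{1/2})\mathcal E^-_{r,j}(q)=L(q)+q^{1/2}\Eeven_{r,j}(q),
\]
where \(L\) denotes the left side of \eqref{eq:bailey-euler-f}. The second term arises because \(q^{N_{r-1}+1/2}\cdot q^{-N_{r-1}}\) reproduces the exponent of \(\Eeven_{r,j}\), which carries \(+2N_{r-1}\) in the \(j\)-sum. I will check this bookkeeping carefully; it is the only delicate point.

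Next I insert the evaluations already proved. From Lemma~\ref{lem:evenBailey}, \(\Eeven_{r,j}=(1+q^{1/2})\overline f_{r,j}/(q;q)_\infty\). From the proof of Theorem~\ref{thm:nahm-fbar-hat}, \(\mathcal E^-_{r,j}=\overline f^{[7]}_{r,j}/(q;q)_\infty\), together with
\[
\overline f^{[7]}_{r,j}-q^{1/2}\overline f_{r,j}=q^{j-r+1}\bigl(J_{r-j-1/2,2r-3/2}-J_{r-j-1,2r-3/2}\bigr).
\]
Substituting these gives
\[
L=(1+q^{1/2})\bigl(\overline f^{[7]}_{r,j}-q^{1/2}\overline f_{r,j}\bigr)/J_1,
\]
which is exactly \eqref{eq:bailey-euler-f}. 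In this way the indefinite parts cancel, as the remark after Theorem~\ref{thm:nahm-fbar-hat} anticipates.

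For \eqref{eq:bailey-euler-g} I will mimic the proof of \eqref{eq:bailey-euler-a}, starting from \eqref{eq:pair-C5} instead of \eqref{eq:pair-C1}. Put \(t=r-j\ge1\). The steps are as follows.
\begin{itemize}
\item[(i)] Compute the finite sum \(\sum_{k\le n}q^{-k^2}\alpha_k(1;q)\) for \eqref{eq:pair-C5}. The odd terms vanish and the even partial sums telescope to \((-1)^nq^{-3n^2-n}\) (to be verified by induction). This allows \eqref{eq:bailey-raise} with \(a=1\), giving a pair relative to \(q\).
\item[(ii)] Apply \eqref{eq:bailey-chain} \(t-1\) times relative to \(q\). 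This produces the nested \(\beta\)-sum with linear terms \(N_{j+1}+\cdots+N_{r-2}\) and the factor \(q^{\binom{N_{r-1}}2}\).
\item[(iii)] Lower the parameter with \eqref{eq:bailey-lower} at \(a=q\). This supplies the extra \(q^{N_j}\).
\item[(iv)] Apply \eqref{eq:bailey-chain} \(j\) times relative to \(1\).
\end{itemize}
The quadratic growth of \(\alpha\) goes from \(n^2\) in \eqref{eq:pair-C5} to \((4r-3)n^2\) for even indices after \(r-1\) chain steps. This matches the modulus \(8r-6\).

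The final \(\alpha\)-sum splits into even and odd indexed parts. These should combine into the two bilateral series \(\sum(-1)^nq^{(4r-3)n^2-(2r-2j-1)n}\) and \(-q^j\sum(-1)^nq^{(4r-3)n^2-(2r+2j-3)n}\), which Jacobi's triple product turns into \(J_{2r+2j-2,8r-6}-q^jJ_{2r-2j-2,8r-6}\). Then \eqref{eq:bailey-limit} with \((q;q)_\infty^2\) finishes the proof, and absolute convergence is justified by the remarks at the start of this section.

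I expect the main obstacle to be the exact tracking of the linear exponents through the lowering step, so that the odd-indexed \(\alpha\)-terms pair up correctly into the second bilateral series. For this I will write out \(r=2\), \(j=1\) explicitly and check it against a short \(q\)-expansion before doing the general index.
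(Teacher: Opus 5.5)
Your argument for \eqref{eq:bailey-euler-f} is correct and is the paper's argument one level down. The paper applies Euler summation in $n_1$ to $\mathcal N^-_{r,j}-q^{1/2}\mathcal N^+_{r,j}$ and then uses the product identity of Theorem~\ref{thm:nahm-fbar-hat}. Your partial-fraction identity for $1/(-q^{1/2};q)_N$ gives exactly that identity divided by $(-q^{3/2};q)_\infty$, and then uses the same evaluations of $\overline f_{r,j}$ and $\overline f^{[7]}_{r,j}$.

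The plan for \eqref{eq:bailey-euler-g} has a genuine gap: step (i) is false.

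\emph{The $\alpha$-side does not telescope.} With the $b\to0$ raising \eqref{eq:bailey-raise} you need $\sum_{k\le n}q^{-k^2}\alpha_k(1;q)$ for \eqref{eq:pair-C5}. Here $q^{-4m^2}\alpha_{2m}(1;q)=(-1)^m(q^{-3m^2+m}+q^{-3m^2-m})$, and these terms do not telescope. Already $\sum_{k=0}^{2}q^{-k^2}\alpha_k(1;q)=1-q^{-2}-q^{-4}\neq-q^{-4}$. The partial sums are the finite theta sums $\sum_{m=-n}^{n}(-1)^mq^{-3m^2-m}$, so this route produces a Hecke-type double sum, not two bilateral theta series. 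The telescoping you are recalling belongs to \eqref{eq:pair-C1}, where $q^{-4m^2}\alpha_{2m}=(-1)^m(q^{-m^2+m}+q^{-m^2-m})$.

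\emph{The $\beta$-side is also off.} The raising \eqref{eq:bailey-raise} leaves $\beta_n$ unchanged. So the first chain step relative to $q$ attaches $q^{N_{r-1}}$ to the innermost variable; when $j=r-1$, the lowering step does this instead. Your claim in (ii) omits this term. You would end with $q^{\binom{N_{r-1}+1}{2}}$ instead of $q^{\binom{N_{r-1}}{2}}$. For $(r,j)=(2,1)$ you would be evaluating $\sum_n q^{n(3n+1)/2}/((q;q^2)_n(q;q)_n)$, not the sum $\sum_n q^{n(3n-1)/2}/((q;q^2)_n(q;q)_n)$ that appears in \eqref{eq:bailey-euler-g}.

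\emph{The fix.} Raise with the $b\to\infty$ limit \eqref{eq:bailey-raise-infty} instead. There the unweighted partial sums telescope:
\[
\sum_{m=0}^{2n}\alpha_m(1;q)=\sum_{m=0}^{2n+1}\alpha_m(1;q)=(-1)^nq^{n^2+n}.
\]
This gives a pair relative to $q$ with
\[
\alpha_{2n}=\frac{1-q^{4n+1}}{1-q}(-1)^nq^{n^2-n},\qquad
\alpha_{2n+1}=\frac{1-q^{4n+3}}{1-q}(-1)^nq^{n^2-n-1},\qquad
\beta_n=\frac{q^{\binom n2-n}}{(q;q)_n(q;q^2)_n}.
\]
The factor $q^{-n}$ in $\beta_n$ is exactly what cancels the extra $q^{N_{r-1}}$, after which your steps (ii)--(iv) go through.

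Finally, the second bilateral series must be $\sum_n(-1)^nq^{(4r-3)n^2+(2r+2j-1)n}$. With $2r+2j-3$, as you wrote, Jacobi's triple product gives $J_{2r-2j,8r-6}$ rather than $J_{2r-2j-2,8r-6}$.
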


\begin{proof}
Let \(\mathscr S^\vee_{r,j}(q)\) denote the multiple sum in
\eqref{eq:bailey-euler-f}.  In the definitions of
\(\mathcal N^-_{r,j}\) and \(\mathcal N^+_{r,j}\), introduce the partial sums
\(N_a:=n_{a+1}+\cdots+n_r\) for \(1\leq a\leq r-1\).
The corresponding linear coefficient vectors satisfy
\[
 \mathbf b^+_{r,j}-\mathbf b^-_{r,j}
 =(1,0,\ldots,0,1)^T.
\]
With \(N:=N_{r-1}=n_r\), Euler's identity evaluates the
\(n_1\)-sums as follows:
\begin{align*}
 &\sum_{n_1\geq0}
  \frac{q^{n_1^2/2+n_1N}}{(q;q)_{n_1}}
 -q^{N+1/2}\sum_{n_1\geq0}
  \frac{q^{n_1^2/2+n_1N+n_1}}{(q;q)_{n_1}}
 \\
 &\quad=(-q^{N+1/2};q)_\infty
 -q^{N+1/2}(-q^{N+3/2};q)_\infty
 =(-q^{N+3/2};q)_\infty.
\end{align*}
Summing over the remaining variables gives
\begin{equation*}
 \mathcal N^-_{r,j}(q)-q^{1/2}\mathcal N^+_{r,j}(q)
 =(-q^{3/2};q)_\infty \mathscr S^\vee_{r,j}(q).
\end{equation*}
By Theorem~\ref{thm:nahm-fbar-hat}, the left-hand side equals
\[
 \frac{q^{j-r+1}}{J_{1/2,2}}
 \left(J_{r-j-1/2,2r-3/2}-J_{r-j-1,2r-3/2}\right).
\]
The product definitions give
\[
 \frac{1}{J_{1/2,2}(-q^{3/2};q)_\infty}
 =\frac{1+q^{1/2}}{J_1}.
\]
Combining these formulas proves \eqref{eq:bailey-euler-f}.

To prove \eqref{eq:bailey-euler-g}, apply
\eqref{eq:bailey-raise-infty} with \(a=1\) to
\eqref{eq:pair-C5}.  Since
\[
 \sum_{m=0}^{2n}\alpha_m(1;q)
 =\sum_{m=0}^{2n+1}\alpha_m(1;q)
 =(-1)^nq^{n^2+n},
\]
the resulting Bailey pair relative to \((q,q)\) is
\begin{equation}
\begin{aligned}
 \alpha^{[0]}_{2n}(q;q)
  &=\frac{1-q^{4n+1}}{1-q}(-1)^nq^{n^2-n},
 &&n\geq0,\\
 \alpha^{[0]}_{2n+1}(q;q)
  &=\frac{1-q^{4n+3}}{1-q}(-1)^nq^{n^2-n-1},
 &&n\geq0,\\
 \beta^{[0]}_n(q;q)
  &=\frac{q^{\binom n2-n}}{(q;q)_n(q;q^2)_n}.
\end{aligned}
\label{eq:C5-raised-pair}
\end{equation}

Apply \(k=r-j-1\geq0\) chain steps \eqref{eq:bailey-chain}
to \eqref{eq:C5-raised-pair}, relative to \((q,q)\). The
\(\alpha\)-sequence becomes
\begin{align*}
 \alpha^{[1]}_{2n}(q;q)
 &=\frac{1-q^{4n+1}}{1-q}(-1)^n
   q^{(4r-4j-3)n^2+(2r-2j-3)n},
 &&n\geq0,\\
 \alpha^{[1]}_{2n+1}(q;q)
 &=\frac{1-q^{4n+3}}{1-q}(-1)^n
   q^{(4r-4j-3)n^2+(6r-6j-7)n+2r-2j-3},
 &&n\geq0.
\end{align*}
For the corresponding \(\beta\)-sequence, we use the notation
\(\beta^{[1]}_n(q;q)=\mathscr B^{(r,j)}_n(q)\). If
\(1\leq j\leq r-2\), then
\[
\mathscr B^{(r,j)}_n(q):=
 \sum_{n\geq N_{j+1}\geq\cdots\geq N_{r-1}\geq0}
 \frac{q^{\displaystyle
   N_{j+1}^2+\cdots+N_{r-1}^2+
   N_{j+1}+\cdots+N_{r-2}+
   \binom{N_{r-1}}2}}
 {(q;q)_{n-N_{j+1}}
  \displaystyle\prod_{a=j+1}^{r-2}(q;q)_{N_a-N_{a+1}}
  (q;q)_{N_{r-1}}(q;q^2)_{N_{r-1}}},
\]
whereas for \(j=r-1\) we set
\[
\mathscr B^{(r,r-1)}_n(q):=
 \frac{q^{\binom n2-n}}{(q;q)_n(q;q^2)_n},
\]
corresponding to the case \(k=0\).

The parameter-lowering transformation \eqref{eq:bailey-lower}
with \(a=q\) then gives a pair relative to \((1,q)\), with
\(\beta\)-sequence
\[
 \beta^{[2]}_n(1;q)=q^n\mathscr B^{(r,j)}_n(q).
\]
At \(n=0\), the convention in \eqref{eq:bailey-lower} gives
\(\alpha^{[2]}_0(1;q)=1\).  For the remaining indices, we obtain
\begin{align*}
 \alpha^{[2]}_{2n}(1;q)
 &=(-1)^nq^{(4r-4j-3)n^2}
   \left(q^{(2r-2j-1)n}+q^{-(2r-2j-1)n}\right),
 &&n\geq1,\\
 \alpha^{[2]}_{2n+1}(1;q)
 &=(-1)^nq^{(4r-4j-3)n^2+(6r-6j-5)n+2r-2j-2}
 \\
 &\quad-(-1)^nq^{(4r-4j-3)n^2+(2r-2j-1)n},
 &&n\geq0.
\end{align*}
Finally, applying \eqref{eq:bailey-chain} \(j\) times relative to
\((1,q)\) gives a pair with \(\alpha^{[3]}_0(1;q)=1\) and
\begin{equation*}
\begin{aligned}
 \alpha^{[3]}_{2n}(1;q)
 &=(-1)^nq^{(4r-3)n^2}
   \left(q^{(2r-2j-1)n}+q^{-(2r-2j-1)n}\right),
 &&n\geq1,\\
 \alpha^{[3]}_{2n+1}(1;q)
 &=(-1)^nq^{(4r-3)n^2+(6r-2j-5)n+2r-j-2}-(-1)^nq^{(4r-3)n^2+(2r+2j-1)n+j},
 &&n\geq0,
\end{aligned}
\end{equation*}
with \(\beta\)-sequence
\begin{align*}
 \beta^{[3]}_n(1;q)
 &=\sum_{n\geq N_1\geq\cdots\geq N_{r-1}\geq0}
 \frac{q^{\displaystyle
   N_1^2+\cdots+N_{r-1}^2+
   N_j+\cdots+N_{r-2}+
   \binom{N_{r-1}}2}}
 {\displaystyle
  (q;q)_{n-N_1}\prod_{a=1}^{r-2}(q;q)_{N_a-N_{a+1}}
  (q;q)_{N_{r-1}}(q;q^2)_{N_{r-1}}}.
\end{align*}

Passing to the limit in the nested \(\beta\)-sum and applying
\eqref{eq:bailey-limit} with relative parameter \(1\) shows that
the multiple sum in \eqref{eq:bailey-euler-g} is
\[
 \frac{1}{(q;q)_\infty}
 \sum_{m=0}^{\infty}\alpha^{[3]}_m(1;q).
\]
The formulas for \(\alpha^{[3]}_m\), together with
\(\alpha^{[3]}_0=1\), give
\begin{align*}
 \sum_{m=0}^{\infty}\alpha^{[3]}_m(1;q)
 &=\sum_{n\in\Z}(-1)^n
   q^{(4r-3)n^2+(2r-2j-1)n}
  -q^j\sum_{n\in\Z}(-1)^n
   q^{(4r-3)n^2+(2r+2j-1)n}\\
 &=J_{6r-2j-4,8r-6}
   -q^jJ_{6r+2j-4,8r-6}\\
 &=J_{2r+2j-2,8r-6}
   -q^jJ_{2r-2j-2,8r-6},
\end{align*}
where the last equality follows from \(J_{a,m}=J_{m-a,m}\).
This proves \eqref{eq:bailey-euler-g}.
\end{proof}

\subsection{Evaluations for the four matrix families}

For the \(A\)-family, Euler summation and separation of the parity
of \(n_r\) reduce the Nahm sums to the identities in
Theorem~\ref{thm:bailey-euler-first}. For \(A^\vee\), Euler summation
alone gives the required nested sum. This yields the following
three evaluations.

\begin{theorem}
\label{thm:nahm-A-families}
For \(r\geq2\) and \(1\leq j\leq r\),
\begin{align}
&\sum_{\mathbf n\in\Z_{\geq0}^r}
 \frac{q^{\frac12\mathbf n^TAD\mathbf n+
                 \mathbf n^T\mathbf b^A_{r,j}}}
 {\displaystyle
  (q^2;q^2)_{n_1}(q^2;q^2)_{n_2}\cdots(q^2;q^2)_{n_{r-1}}(q;q)_{n_r}}
 \notag\\*[-2pt]
&\hspace{9mm}=
 \frac{J_{4r+4j,16r-4}-q^{2j}J_{4r-4j,16r-4}}{J_{1,4}}
 +q^{j-r/2+1/2}
 \frac{\widetilde J_{2r+2j,4r-1}
       -\widetilde J_{2r-2j,4r-1}}{J_{2,4}},
 \label{eq:A-primary}
 \\
&\sum_{\mathbf n\in\Z_{\geq0}^r}
 \frac{q^{\frac12\mathbf n^TAD\mathbf n+
                 \mathbf n^T\mathbf b^{A,0}_{r,j}}}
 {\displaystyle
  (q^2;q^2)_{n_1}(q^2;q^2)_{n_2}\cdots(q^2;q^2)_{n_{r-1}}(q;q)_{n_r}}
 \notag\\*[-2pt]
&\hspace{9mm}=
 \frac{q^{(2j-r+1)/2}J_{8r+4j,16r-4}
       +q^{(3r-2j+1)/2}J_{4j,16r-4}
       -q^{(2j-r+1)/2}J_{8r-4j,16r-4}}{J_{1,4}}
 \notag\\*[-2pt]
&\hspace{22mm}+
 \frac{\widetilde J_{4r-2-2j,4r-1}
       -q\widetilde J_{4r-2j,4r-1}
       +q\widetilde J_{2j,4r-1}}{J_{2,4}},
 \label{eq:A-secondary}
 \\
&\sum_{\mathbf n\in\Z_{\geq0}^r}
 \frac{q^{\frac12\mathbf n^TA^\vee D^\vee\mathbf n+
                 \mathbf n^T\mathbf b^{A^\vee}_{r,j}}}
 {\displaystyle
  (q;q)_{n_1}(q;q)_{n_2}\cdots(q;q)_{n_{r-1}}(q^2;q^2)_{n_r}}
 =q^{j-r+1/2}
 \frac{J_{r-j,2r-1/2}-J_{r+j,2r-1/2}}{J_{1/2,2}}.
 \label{eq:A-dual}
\end{align}
\end{theorem}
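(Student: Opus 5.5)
The three identities reduce to the nested-sum evaluations of Theorem~\ref{thm:bailey-euler-first}. For the $A$-family identities \eqref{eq:A-primary} and \eqref{eq:A-secondary} I would use Euler summation in $n_1$ and then split by the parity of $n_r$. For \eqref{eq:A-dual} Euler summation alone should suffice.

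First I would expand the quadratic form. Using the formula for $\frac12\mathbf x^TAD\mathbf x$ and the tail sums $N_a=n_{a+1}+\cdots+n_r$, the exponent becomes
\[
 n_1^2+n_1n_r+\tfrac14 n_r^2+\tfrac12\sum_{a=1}^{r-2}(2N_a-n_r)^2
\]
plus the linear term. This form has half-integral cross terms in $n_r$, so I would write $n_r=2N+\epsilon$ with $\epsilon\in\{0,1\}$. Then $2N_a-n_r$ becomes $2(N_a-N)-\epsilon$, with $N_a-N$ a nested chain. For each $\epsilon$, the sum over $n_1$ with $(q^2;q^2)_{n_1}$ is evaluated by Euler's identity in base $q^2$, giving $(-q^{2N+\epsilon+1+(\cdot)};q^2)_\infty$. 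This is a fixed product divided by a finite product $(-q^{\cdot};q^2)_{N}$ or similar.

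Next I would combine the finite product with $(q;q)_{2N+\epsilon}$. The identity $(q;q)_{2N}=(q;q^2)_N(q^2;q^2)_N$ and its $\epsilon=1$ analogue turn the denominators into $(q^2;q^2)$-type chains together with $(q;q^2)_N$ or $(q^3;q^2)_N$. This should produce, in base $q^2$, the left sides of \eqref{eq:bailey-euler-a}--\eqref{eq:bailey-euler-d} with $q\mapsto q^2$ or $q\mapsto -q$.

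The remaining products are then converted. The combinations $(-q;q^2)_\infty/(q^2;q^2)_\infty$ and $(-q^2;q^2)_\infty/(q^2;q^2)_\infty$ become $1/J_{1,4}$ and $1/J_{2,4}$. I expect the even part of \eqref{eq:A-primary} to come from \eqref{eq:bailey-euler-a} with $q\to q^2$, giving the $J_{\cdot,16r-4}/J_{1,4}$ terms. The odd part should come from \eqref{eq:bailey-euler-b}, giving the $\widetilde J/J_{2,4}$ terms, with the factor $(1-q)$ cancelling. Likewise \eqref{eq:A-secondary} should use \eqref{eq:bailey-euler-c} and \eqref{eq:bailey-euler-d}.

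For \eqref{eq:A-dual}, $A^\vee D^\vee=B^\vee D^\vee-E_{rr}$. By \eqref{eq:BD-dual}, the exponent is the $\mathcal N^+$ exponent with $\frac12 n_r^2$ removed, since $\mathbf b^{A^\vee}_{r,j}=\mathbf b^+_{r,j}$, and with no sign factor. Euler summation over $n_1$ exactly as in Lemma~\ref{lem:evenBailey} gives $(-q^{3/2};q)_\infty$ times the left side of \eqref{eq:bailey-euler-e}. The identity $(-q^{3/2};q)_\infty(1+q^{1/2})/J_1=1/J_{1/2,2}$ then finishes the proof.

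The main obstacle is bookkeeping in the $A$-family. I would need to verify that each parity class produces exactly the linear exponents $2N_1+\cdots$ or $N_j+\cdots$ required by Theorem~\ref{thm:bailey-euler-first}, that the $\epsilon=1$ class matches after the substitution $q\to-q$, and that the monomial prefactors such as $q^{j-r/2+1/2}$ come out right. I would first test $r=2$ by hand before trusting the general exponent identity.
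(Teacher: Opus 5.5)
Your plan matches the paper's proof. The paper splits $n_r=2m+\varepsilon$ and sums over $n_1$ with Euler's identity in base $q^2$. The even and odd classes of \eqref{eq:A-primary} then reduce to \eqref{eq:bailey-euler-a} (with $q\to q^2$) and \eqref{eq:bailey-euler-b}, those of \eqref{eq:A-secondary} to \eqref{eq:bailey-euler-d} and \eqref{eq:bailey-euler-c} (with $q\to q^2$), and \eqref{eq:A-dual} follows from Euler summation plus \eqref{eq:bailey-euler-e}.

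One bookkeeping slip needs fixing. Since $(n_1+n_r/2)^2+\tfrac14 n_r^2$ contributes $\tfrac12 n_r^2$, the exponent is $n_1^2+n_1n_r+\tfrac12 n_r^2+\tfrac12\sum_{a=1}^{r-2}(2N_a-n_r)^2$, not $\tfrac14 n_r^2$. Also take the chain variables to be $n_{a+1}+\cdots+n_{r-1}+m$ rather than $N_a-N$, which is off by $\varepsilon$. Then $2N_a-n_r=2(n_{a+1}+\cdots+n_{r-1}+m)+\varepsilon$, and you get exactly the quadratic part $2\sum_{a=1}^{r-1}N_a^2$ and the linear terms those identities require.
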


\begin{proof}
Let \(K\) denote the left-hand side of \eqref{eq:A-primary}.
For \(2\leq a\leq r-1\), consider the auxiliary sums
\[
 L_a:=n_a+\cdots+n_{r-1}+\frac{n_r}{2},
 \qquad L_r:=\frac{n_r}{2}.
\]
Direct expansion gives
\[
 \frac12\mathbf n^TAD\mathbf n+\mathbf n^T\mathbf b^A_{r,j}
 =n_1^2+n_1n_r+2\sum_{a=2}^{r-1}L_a^2
  +\frac12n_r^2+2\sum_{a=j+1}^{r}L_a.
\]
Replacing \(\mathbf b^A_{r,j}\) by \(\mathbf b^{A,0}_{r,j}\)
adds \(n_1+n_r\) to the exponent. The empty-sum convention
applies throughout, including \(j=r\) and \(r=2\).

Separating the two cases \(n_r=2m+\varepsilon\), where \(m\geq0\)
and \(\varepsilon\in\{0,1\}\), leads to the variables
\[
 N_a:=n_{a+1}+\cdots+n_{r-1}+m\quad(1\leq a\leq r-2),
 \qquad N_{r-1}:=m.
\]
Thus \(N_1\geq\cdots\geq N_{r-1}\geq0\) and
\(n_{a+1}=N_a-N_{a+1}\) for \(1\leq a\leq r-2\).  We use
\(\sum_{\mathbf N}\) for summation over this weakly decreasing
chain.  For the sum in \eqref{eq:A-primary}, the exponents
in the two parity classes are
\begin{align*}
 E_0={}&n_1^2+2mn_1+2\sum_{a=1}^{r-1}N_a^2
              +2\sum_{a=j}^{r-1}N_a,\\
 E_1={}&n_1^2+(2m+1)n_1+2\sum_{a=1}^{r-1}N_a^2
              +2\sum_{a=1}^{j-1}N_a+4\sum_{a=j}^{r-1}N_a
              +\frac{3r-2j-1}{2}.
\end{align*}
Euler's identity, in the form
\[
 \sum_{n\geq0}\frac{q^{n^2+(2m+d)n}}{(q^2;q^2)_n}
 =(-q^{2m+d+1};q^2)_\infty\qquad(d=0,1,2),
\]
evaluates the sum over \(n_1\).  Let \(K_0\) and
\(K_1\) be the contributions with even and odd \(n_r\),
respectively.  Then
\begin{equation}
 K=K_0+K_1.
 \label{eq:A-parity-split}
\end{equation}
We also use the product decompositions
\begin{align*}
 \frac{(-q^{2m+1};q^2)_\infty}{(q;q)_{2m}}
 &=\frac{(-q;q^2)_\infty}
 {(q^2;q^2)_m(q^2;q^4)_m},\\
 \frac{(-q^{2m+2};q^2)_\infty}{(q;q)_{2m+1}}
 &=\frac{(-q^2;q^2)_\infty}
 {(1-q)(q^3;q^2)_m(q^4;q^4)_m}.
\end{align*}
The even contribution to \eqref{eq:A-primary} is therefore
\begin{equation}
\begin{aligned}
 K_0={}&(-q;q^2)_\infty\sum_{\mathbf N}
 \frac{q^{2N_1^2+\cdots+2N_{r-1}^2
             +2N_j+\cdots+2N_{r-1}}}
 {\prod_{a=1}^{r-2}(q^2;q^2)_{N_a-N_{a+1}}(q^2;q^2)_m(q^2;q^4)_m}.
\end{aligned}
\label{eq:A-even-sum}
\end{equation}
The odd part is
\begingroup\small
\begin{equation}
\begin{aligned}
 K_1={}&
 \frac{q^{(3r-2j-1)/2}(-q^2;q^2)_\infty}{1-q}
 \sum_{\mathbf N}
 \frac{q^{2N_1^2+\cdots+2N_{r-1}^2
 +2N_1+\cdots+2N_{j-1}+4N_j+\cdots+4N_{r-1}}}
 {\prod_{a=1}^{r-2}(q^2;q^2)_{N_a-N_{a+1}}(q^3;q^2)_m(q^4;q^4)_m}.
\end{aligned}
\label{eq:A-odd-sum}
\end{equation}
\endgroup
We evaluate \(K_0\) using \eqref{eq:bailey-euler-a}
with \(q\) replaced by \(q^2\), and \(K_1\) using
\eqref{eq:bailey-euler-b}.  The product identities
\[
 \frac{(-q;q^2)_\infty}{J_2}=\frac1{J_{1,4}},
 \qquad
 \frac{(-q^2;q^2)_\infty}{J_2}=\frac1{J_{2,4}}
\]
then give
\begin{align}
 K_0&=
 \frac{J_{4r+4j,16r-4}-q^{2j}J_{4r-4j,16r-4}}{J_{1,4}},
 \label{eq:A-even-eval}\\
 K_1&=q^{j-r/2+1/2}
 \frac{\widetilde J_{2r+2j,4r-1}
       -\widetilde J_{2r-2j,4r-1}}{J_{2,4}}.
 \label{eq:A-odd-eval}
\end{align}
Substituting \eqref{eq:A-even-eval} and \eqref{eq:A-odd-eval}
into \eqref{eq:A-parity-split} proves \eqref{eq:A-primary}.

For \eqref{eq:A-secondary}, retain the same variables \(m\),
\(\varepsilon\), and \(\mathbf N\). Denote the even and odd
contributions by
\(K_0^\circ\) and \(K_1^\circ\), respectively.
The exponent differs from that in \eqref{eq:A-primary} by
\(n_1+n_r\), so the sum over \(n_1\) becomes
\[
 \sum_{n_1\geq0}
 \frac{q^{n_1^2+(2m+\varepsilon+1)n_1}}{(q^2;q^2)_{n_1}}
 =(-q^{2m+\varepsilon+2};q^2)_\infty.
\]
The corresponding product decompositions are
\begin{align*}
 \frac{(-q^{2m+2};q^2)_\infty}{(q;q)_{2m}}
 &=\frac{(-q^2;q^2)_\infty}
 {(q;q^2)_m(q^4;q^4)_m},\\
 \frac{(-q^{2m+3};q^2)_\infty}{(q;q)_{2m+1}}
 &=\frac{(-q^3;q^2)_\infty}
 {(1-q)(q^2;q^2)_m(q^6;q^4)_m}.
\end{align*}
It follows that
\begin{align*}
 K_0^\circ={}&(-q^2;q^2)_\infty\sum_{\mathbf N}
 \frac{q^{2N_1^2+\cdots+2N_{r-1}^2
 +2N_j+\cdots+2N_{r-1}+2m}}
 {\displaystyle
  \prod_{a=1}^{r-2}(q^2;q^2)_{N_a-N_{a+1}}
  (q;q^2)_m(q^4;q^4)_m},\\
 K_1^\circ={}&
 \frac{q^{(3r-2j+1)/2}(-q^3;q^2)_\infty}{1-q}
 \sum_{\mathbf N}
 \frac{q^{2N_1^2+\cdots+2N_{r-1}^2
 +2N_1+\cdots+2N_{j-1}
 +4N_j+\cdots+4N_{r-1}+2m}}
 {\displaystyle
  \prod_{a=1}^{r-2}(q^2;q^2)_{N_a-N_{a+1}}
  (q^2;q^2)_m(q^6;q^4)_m}.
\end{align*}
Equation~\eqref{eq:bailey-euler-d} evaluates
\(K_0^\circ\), and \eqref{eq:bailey-euler-c} with \(q\)
replaced by \(q^2\) evaluates \(K_1^\circ\).  Using
\[
 \frac{(1-q^2)(-q^3;q^2)_\infty}{(1-q)J_2}
 =\frac1{J_{1,4}},
\]
we obtain
\begin{align*}
 K_0^\circ={}&
 \frac{\widetilde J_{4r-2-2j,4r-1}
       -q\widetilde J_{4r-2j,4r-1}
       +q\widetilde J_{2j,4r-1}}{J_{2,4}},\\
 K_1^\circ={}&
 \frac{q^{(2j-r+1)/2}J_{8r+4j,16r-4}
       +q^{(3r-2j+1)/2}J_{4j,16r-4}
       -q^{(2j-r+1)/2}J_{8r-4j,16r-4}}{J_{1,4}}.
\end{align*}
Adding these expressions proves \eqref{eq:A-secondary}.

For the remaining identity \eqref{eq:A-dual}, introduce the tail sums
\(N_a:=n_{a+1}+\cdots+n_r\) for \(1\leq a\leq r-1\).  Direct
expansion gives
\begin{align*}
 \frac12\mathbf n^TA^\vee D^\vee\mathbf n
 +\mathbf n^T\mathbf b^{A^\vee}_{r,j}
={}&\frac12n_1^2+n_1N_{r-1}+n_1
 +\sum_{a=1}^{r-1}N_a^2
 +\sum_{a=1}^{j-1}N_a+2\sum_{a=j}^{r-1}N_a.
\end{align*}
Euler's identity gives
\[
 \sum_{n_1\geq0}
 \frac{q^{n_1^2/2+n_1N_{r-1}+n_1}}{(q;q)_{n_1}}
 =(-q^{N_{r-1}+3/2};q)_\infty.
\]
Moreover,
\[
 \frac{(-q^{N_{r-1}+3/2};q)_\infty}
 {(q^2;q^2)_{N_{r-1}}}
 =\frac{(-q^{3/2};q)_\infty}
 {(q^2;q^2)_{N_{r-1}}(-q^{3/2};q)_{N_{r-1}}}.
\]
Factoring out \((-q^{3/2};q)_\infty\) and applying
\eqref{eq:bailey-euler-e}, we find that the left-hand side of
\eqref{eq:A-dual} equals
\[
 (-q^{3/2};q)_\infty(1+q^{1/2})q^{j-r+1/2}
 \frac{J_{r-j,2r-1/2}-J_{r+j,2r-1/2}}{J_1}.
\]
Finally,
\[
 \frac{(1+q^{1/2})(-q^{3/2};q)_\infty}{J_1}
 =\frac1{J_{1/2,2}},
\]
which proves \eqref{eq:A-dual}.
\end{proof}

The evaluation for $B$ follows from
Theorem~\ref{thm:bailey-euler-second}. For $B^\vee$, Euler
summation identifies the required series with the signed
cancellation in Theorem~\ref{thm:nahm-fbar-hat}. These two
arguments give the following evaluations.

\begin{theorem}
\label{thm:nahm-B-families}
For \(r\geq2\) and \(1\leq j\leq r-1\),
\begin{align}
&\sum_{\mathbf n\in\Z_{\geq0}^r}
 \frac{q^{\frac12\mathbf n^TBD\mathbf n+
                 \mathbf n^T\mathbf b^B_{r,j}}}
 {\displaystyle
  (q^2;q^2)_{n_1}(q^2;q^2)_{n_2}\cdots(q^2;q^2)_{n_{r-1}}(q;q)_{n_r}}
 \notag\\*[-2pt]
&\hspace{7mm}=
 \frac{J_{4(r+j-1),4(4r-3)}
       -q^{2j}J_{4(r-j-1),4(4r-3)}}{J_{1,4}}
 \notag\\*[-2pt]
&\hspace{20mm}+
 q^{(4j-2r+1)/4}
 \frac{\widetilde J_{2r-2j-1,4r-3}
       -\widetilde J_{2r-2j-2,4r-3}}{J_{2,4}},
 \label{eq:B-primary}
 \\
&\sum_{\mathbf n\in\Z_{\geq0}^r}
 \frac{(-1)^{n_r}
 q^{\frac12\mathbf n^TB^\vee D^\vee\mathbf n+
                 \mathbf n^T\mathbf b^{B^\vee}_{r,j}}}
 {\displaystyle
  (q;q)_{n_1}(q;q)_{n_2}\cdots(q;q)_{n_{r-1}}(q^2;q^2)_{n_r}}
 =q^{j-r+1}
 \frac{J_{r-j-1/2,2r-3/2}
       -J_{r-j-1,2r-3/2}}{J_{1/2,2}}.
 \label{eq:B-dual}
\end{align}
\end{theorem}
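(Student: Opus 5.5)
For \eqref{eq:B-dual} I will work with the stated series directly, not with the difference of the two signed branches. I first observe that \(\mathbf b^{B^\vee}_{r,j}=\mathbf b^-_{r,j}+(1,0,\ldots,0)^T\). Next I introduce the tail sums \(N_a=n_{a+1}+\cdots+n_r\) and use the expansion of \(\frac12\mathbf n^TB^\vee D^\vee\mathbf n+\mathbf n^T\mathbf b^-_{r,j}\) from the proof of Theorem~\ref{thm:nahm-fbar-hat}. With these, the exponent of the stated series becomes
\[
\tfrac12n_1^2+n_1N_{r-1}+n_1+\sum_{a=1}^{r-1}N_a^2+\tfrac12N_{r-1}^2+\sum_{a=1}^{j-1}N_a+2\sum_{a=j}^{r-1}N_a-N_{r-1}.
\]
Euler's identity evaluates the \(n_1\)-sum as \((-q^{N_{r-1}+3/2};q)_\infty=(-q^{3/2};q)_\infty/(-q^{3/2};q)_{N_{r-1}}\). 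The remaining nested sum is then exactly \(\mathscr S^\vee_{r,j}(q)\), the left-hand side of \eqref{eq:bailey-euler-f}. I will check this by matching the linear term \(2N_{r-1}-N_{r-1}=N_{r-1}\), the sign \((-1)^{N_{r-1}}\), and the denominators \((q^2;q^2)_{N_{r-1}}(-q^{3/2};q)_{N_{r-1}}\). Applying \eqref{eq:bailey-euler-f} together with \((1+q^{1/2})(-q^{3/2};q)_\infty/J_1=1/J_{1/2,2}\) then gives \eqref{eq:B-dual}.

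For \eqref{eq:B-primary} I will copy the parity argument used for \eqref{eq:A-primary}. Since \(BD=AD+\frac12E_{rr}\), the quadratic form gains \(\frac14n_r^2\). In addition, the last linear coefficient changes from \(r-j\) to \((2r-2j-3)/2\). I write \(n_r=2m+\varepsilon\) and keep the same \(N_a\) and the same Euler summation over \(n_1\). The same product decompositions then produce two contributions. The even part \(K_0\) carries \((-q;q^2)_\infty\) and the denominators \((q^2;q^2)_m(q^2;q^4)_m\). Its exponent is \(2\sum N_a^2+2\sum_{a\ge j}N_a\) plus \(m^2\) and a linear correction. I expect this correction to reduce to \(2\binom{m}{2}\) after the \(2N_{r-1}\)-term is absorbed, that is, to the \(q\to q^2\) form of \eqref{eq:bailey-euler-g}. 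That identity, with \((-q;q^2)_\infty/J_2=1/J_{1,4}\), gives the \(J_{\cdot,4(4r-3)}\)-terms. The odd part \(K_1\) carries \((-q^2;q^2)_\infty/(1-q)\) and the denominators \((q^3;q^2)_m(q^4;q^4)_m\). It also has an extra quadratic \(m^2\) (odd parity \(\equiv m\)). I will replace \(q\) by \(-q\) in the nested sum, which turns \((q^3;q^2)_m\) into \((-q^3;q^2)_m\) and produces \((-1)^m\). The sum is then the \(q\to q^2\) form of \eqref{eq:bailey-euler-f}, which I evaluate and transport back under \(q\to-q\). As in the derivation of \eqref{eq:bailey-euler-b}, this turns \(J_{\cdot,4r-3}\) into \(\widetilde J_{\cdot,4r-3}\). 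The first indices \(2r-2j-1\) and \(2r-2j-2\) now have mixed parity, so the sign bookkeeping must use \(\widetilde J\) directly. Finally \((1+q)(1-q)^{-1}\)-type factors cancel against \((-q^2;q^2)_\infty/J_2=1/J_{2,4}\).

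The main obstacle is the odd part: fixing the exact exponent, including the constant \((4j-2r+1)/4\), and tracking the signs under \(q\to-q\), including the factors \((1\pm q)\) and the monomial prefactor. I would settle this first by writing out \(E_1\) explicitly for the \(B\)-data, and I would check \(r=2\), \(j=1\) against a short \(q\)-expansion.
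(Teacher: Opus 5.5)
Your proposal is correct and follows the paper's proof. For \eqref{eq:B-primary} you use the same parity split, Euler summation, and evaluations: \eqref{eq:bailey-euler-g} at $q^2$ for the even part, and \eqref{eq:bailey-euler-f} at $q^2$ followed by $q\mapsto-q$ for the odd part. One correction to your wording there: the factor $1+q$ becomes $1-q$ under $q\mapsto-q$ and cancels the $(1-q)^{-1}$ from the odd product decomposition, while $(-q^2;q^2)_\infty/J_2=1/J_{2,4}$ is a separate simplification. For \eqref{eq:B-dual}, your direct reduction to $(-q^{3/2};q)_\infty\mathscr S^\vee_{r,j}(q)$ and appeal to \eqref{eq:bailey-euler-f} is the paper's reduction to $\mathcal N^-_{r,j}(q^2)-q\mathcal N^+_{r,j}(q^2)$ and \eqref{eq:sub-eta-Bdual}, since \eqref{eq:bailey-euler-f} was proved from that cancellation by the same Euler step.
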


\begin{proof}
We first prove \eqref{eq:B-primary}.  For \(2\leq a\leq r-1\), define
\[
 L_a:=n_a+\cdots+n_{r-1}+\frac{n_r}{2}.
\]
A direct expansion gives
\[
 \frac12\mathbf n^TBD\mathbf n+\mathbf n^T\mathbf b^B_{r,j}
 =n_1^2+n_1n_r+2\sum_{a=2}^{r-1}L_a^2+\frac34n_r^2
  +2\sum_{a=j+1}^{r-1}L_a-\frac12n_r.
\]
For the two parity classes \(n_r=2m+\varepsilon\), where \(m\geq0\)
and \(\varepsilon\in\{0,1\}\), introduce
\[
 N_a:=n_{a+1}+\cdots+n_{r-1}+m\quad(1\leq a\leq r-2),
 \qquad N_{r-1}:=m.
\]
As in the \(A\)-case, \(\sum_{\mathbf N}\) denotes summation over
\(N_1\geq\cdots\geq N_{r-1}\geq0\). The exponents in the
even and odd parity classes are
\begin{align*}
 E_0^B={}&n_1^2+2mn_1+2\sum_{a=1}^{r-1}N_a^2
 +2\sum_{a=j}^{r-2}N_a+m^2-m,\\
 E_1^B={}&n_1^2+(2m+1)n_1+2\sum_{a=1}^{r-1}N_a^2+m^2
 +2\sum_{a=1}^{j-1}N_a+4\sum_{a=j}^{r-2}N_a+2m
 +\frac{6r-4j-7}{4}.
\end{align*}
Euler's identity evaluates the \(n_1\)-sum as
\[
 \sum_{n_1\geq0}
 \frac{q^{n_1^2+(2m+\varepsilon)n_1}}{(q^2;q^2)_{n_1}}
 =(-q^{2m+\varepsilon+1};q^2)_\infty.
\]
Using the product decompositions
\begin{align*}
 \frac{(-q^{2m+1};q^2)_\infty}{(q;q)_{2m}}
 &=\frac{(-q;q^2)_\infty}
 {(q^2;q^2)_m(q^2;q^4)_m},\\
 \frac{(-q^{2m+2};q^2)_\infty}{(q;q)_{2m+1}}
 &=\frac{(-q^2;q^2)_\infty}
 {(1-q)(q^3;q^2)_m(q^4;q^4)_m},
\end{align*}
we obtain the even and odd contributions
\begin{align*}
 \mathscr S^{(0)}_{r,j}={}&(-q;q^2)_\infty
 \sum_{\mathbf N}
 \frac{q^{2N_1^2+\cdots+2N_{r-1}^2
 +2N_j+\cdots+2N_{r-2}+m^2-m}}
 {\displaystyle
  \prod_{a=1}^{r-2}(q^2;q^2)_{N_a-N_{a+1}}
  (q^2;q^4)_m(q^2;q^2)_m},\\
 \mathscr S^{(1)}_{r,j}={}&
 \frac{q^{(6r-4j-7)/4}(-q^2;q^2)_\infty}{1-q}
 \sum_{\mathbf N}
 \frac{q^{2N_1^2+\cdots+2N_{r-1}^2
 +2N_1+\cdots+2N_{j-1}
 +4N_j+\cdots+4N_{r-2}+2m+m^2}}
 {\displaystyle
  \prod_{a=1}^{r-2}(q^2;q^2)_{N_a-N_{a+1}}
  (q^3;q^2)_m(q^4;q^4)_m}.
\end{align*}
Equation~\eqref{eq:bailey-euler-g}, with \(q\) replaced by \(q^2\),
gives
\[
 \mathscr S^{(0)}_{r,j}=
 \frac{(-q;q^2)_\infty}{J_2}
 \left(J_{4(r+j-1),4(4r-3)}
       -q^{2j}J_{4(r-j-1),4(4r-3)}\right).
\]
Since \((-q;q^2)_\infty/J_2=1/J_{1,4}\), this is the first term on
the right-hand side of \eqref{eq:B-primary}.

To evaluate the odd contribution, replace $q$ first by $q^2$
and then by $-q$ in \eqref{eq:bailey-euler-f}. The multiple sum in
$\mathscr S^{(1)}_{r,j}$ becomes
\[
 (1-q)q^{2j-2r+2}
 \frac{\widetilde J_{2r-2j-1,4r-3}
       -\widetilde J_{2r-2j-2,4r-3}}{J_2}.
\]
We simplify the prefactor using
\[
 \frac{(-q^2;q^2)_\infty}{J_2}=\frac1{J_{2,4}},
 \qquad
 \frac{6r-4j-7}{4}+2j-2r+2=\frac{4j-2r+1}{4}.
\]
This yields
\[
 \mathscr S^{(1)}_{r,j}=
 q^{(4j-2r+1)/4}
 \frac{\widetilde J_{2r-2j-1,4r-3}
       -\widetilde J_{2r-2j-2,4r-3}}{J_{2,4}}.
\]
Adding the two parity contributions proves \eqref{eq:B-primary}.

For \eqref{eq:B-dual}, we identify its left-hand side directly
with the signed cancellation in Theorem~\ref{thm:nahm-fbar-hat}.
Work first at the nome $q^2$ and fix $t=n_r$. Euler's identity gives
\[
 \sum_{n\geq0}\frac{q^{n^2+2tn}}{(q^2;q^2)_n}
 =(-q^{2t+1};q^2)_\infty,
 \qquad
 \sum_{n\geq0}\frac{q^{n^2+2tn+2n}}{(q^2;q^2)_n}
 =(-q^{2t+3};q^2)_\infty.
\]
The relation
\[
 (-q^{2t+1};q^2)_\infty
 -q^{2t+1}(-q^{2t+3};q^2)_\infty
 =(-q^{2t+3};q^2)_\infty
\]
therefore implies
\[
 \sum_{n\geq0}
 \frac{q^{n^2+2tn}(1-q^{2n+2t+1})}{(q^2;q^2)_n}
 =\sum_{n\geq0}\frac{q^{n^2+2tn+2n}}{(q^2;q^2)_n}.
\]
Multiplying by the common factors in the remaining variables and
using $\mathbf b^{B^\vee}_{r,j}=\mathbf b^-_{r,j}+e_1$, we obtain
\begin{align*}
 &\sum_{\mathbf n\in\N^r}
 \frac{(-1)^{n_r}
 q^{\mathbf n^TB^\vee D^\vee\mathbf n+
                 2\mathbf n^T\mathbf b^{B^\vee}_{r,j}}}
 {\bigl(\prod_{a=1}^{r-1}(q^2;q^2)_{n_a}\bigr)
                    (q^4;q^4)_{n_r}}\\
 &\hspace{20mm}=\mathcal N^-_{r,j}(q^2)
                    -q\mathcal N^+_{r,j}(q^2).
\end{align*}
Absolute convergence justifies the summations. The product
identity \eqref{eq:sub-eta-Bdual} now evaluates the right-hand
side. Replacing $\tau$ by $\tau/2$, with all fractional powers
interpreted by the upper-half-plane convention, gives
\eqref{eq:B-dual}. At \(j=r-1\), the identities
\[
 J_{0,4(4r-3)}=\widetilde J_{0,4r-3}
 =J_{0,2r-3/2}=0
\]
remove the terms with zero first index. Together with the
empty-sum convention, this includes the smallest case
\((r,j)=(2,1)\).
\end{proof}

\subsection{Half-period translation in the transpose families}

Translations of expressions with rational powers are interpreted
in the upper-half-plane variable. In particular, replacing $q$ by
$-q$ means $\tau\mapsto\tau+\tfrac12$, so that
$\sigma=2\tau$ is replaced by $\sigma+1$.
The following congruences show that no change of the quadratic
matrix or the index vector is needed.

\begin{lemma}
\label{lem:half-period-closure}
For every $\mathbf n\in\Z^r$,
\begin{align}
 \mathbf n^TA^\vee D^\vee\mathbf n&\equiv n_1\pmod2,
 &\mathbf n^TB^\vee D^\vee\mathbf n&\equiv n_1+n_r\pmod2.
 \label{eq:dual-parity}
\end{align}
For $\mathsf A\in\{A^\vee,B^\vee\}$,
$\mathbf u\in\Z^r$, $c\in\Q$, and
$\boldsymbol\varepsilon\in\{1,-1\}^r$, define
\[
 F_{\boldsymbol\varepsilon}(\sigma)
 :=\sum_{\mathbf n\in\N^r}
 \frac{\bigl(\prod_{a=1}^r\varepsilon_a^{n_a}\bigr)
 e^{2\pi i\sigma(\frac12\mathbf n^T\mathsf A D^\vee\mathbf n+
                           \mathbf n^T\mathbf u+c)}}
 {\prod_{a=1}^r
   (e^{2\pi i d_a^\vee\sigma};e^{2\pi i d_a^\vee\sigma})_{n_a}}.
\]
Then
\[
 F_{\boldsymbol\varepsilon}(\sigma+1)
 =e^{2\pi ic}F_{\boldsymbol\varepsilon'}(\sigma),
\]
where $\varepsilon'_1=-\varepsilon_1$ and
$\varepsilon'_a=\varepsilon_a$ for $2\leq a\leq r-1$, while
$\varepsilon'_r=\varepsilon_r$ for $\mathsf A=A^\vee$ and
$\varepsilon'_r=-\varepsilon_r$ for $\mathsf A=B^\vee$.
\end{lemma}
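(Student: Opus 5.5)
The plan is to read off the parity congruences \eqref{eq:dual-parity} from the explicit quadratic forms, and then compare the two series term by term.

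For $B^\vee$, I would use \eqref{eq:BD-dual}. Doubling it gives
\[
 \mathbf n^TB^\vee D^\vee\mathbf n
 =n_1^2+2n_1n_r+2\sum_{a=1}^{r-1}N_a^2+n_r^2 .
\]
Modulo $2$ this is $n_1^2+n_r^2\equiv n_1+n_r$.

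For $A^\vee$, I would use $A^\vee D^\vee=B^\vee D^\vee-E_{rr}$, recorded at the start of Section~\ref{sec:proof-four-families}. This subtracts $n_r^2$ from the form above. The result is $n_1^2+2n_1n_r+2\sum N_a^2\equiv n_1\pmod 2$.

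These identities were only derived for real $\mathbf x$ or for $\mathbf n\in\N^r$. However, they are polynomial identities, so they hold on $\Z^r$.

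Next, I would substitute $\sigma\mapsto\sigma+1$ into $F_{\boldsymbol\varepsilon}$. The denominators $(e^{2\pi i d_a^\vee\sigma};e^{2\pi i d_a^\vee\sigma})_{n_a}$ are unchanged, because $d_a^\vee\in\Z$. In the numerator, the exponential acquires the factor
\[
 e^{2\pi i(\frac12\mathbf n^T\mathsf AD^\vee\mathbf n+\mathbf n^T\mathbf u+c)}.
\]
Since $\mathbf u\in\Z^r$, the linear term contributes $1$. The constant $c$ contributes $e^{2\pi ic}$.

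By the congruences, $\frac12\mathbf n^T\mathsf AD^\vee\mathbf n$ lies in $\frac12\Z$. Its class modulo $1$ is $n_1/2$ for $A^\vee$ and $(n_1+n_r)/2$ for $B^\vee$. Hence the quadratic phase is $(-1)^{n_1}$ in the first case and $(-1)^{n_1+n_r}$ in the second.

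Absorbing these signs into the $\varepsilon_a^{n_a}$ produces exactly $\boldsymbol\varepsilon'$. Absolute convergence, which was noted at the start of Section~\ref{sec:proof-four-families}, justifies the termwise comparison.

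The only delicate point is the integrality bookkeeping. The proof needs $\mathbf n^T\mathsf AD^\vee\mathbf n\in\Z$, not merely in $\frac12\Z$, so that halving it yields a phase that is exactly $\pm1$. The displayed expansion is integral, so this holds. I would verify it directly from \eqref{eq:BD-dual}.
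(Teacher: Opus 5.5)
Your proposal is correct and matches the paper's proof: you double the form in \eqref{eq:BD-dual} and reduce modulo $2$ with $n_a^2\equiv n_a$, then subtract $n_r^2$ for $A^\vee$ via $A^\vee D^\vee=B^\vee D^\vee-E_{rr}$. You then compare summands termwise, using integrality of $d_a^\vee$ and $\mathbf u$ and the resulting sign $e^{\pi i\mathbf n^T\mathsf A D^\vee\mathbf n}=\pm1$.
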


\begin{proof}
The quadratic-form identity \eqref{eq:BD-dual} gives
\[
 \mathbf n^TB^\vee D^\vee\mathbf n
 =n_1^2+2n_1n_r+
  2\sum_{a=1}^{r-1}(n_{a+1}+\cdots+n_r)^2+n_r^2.
\]
Since $n_a^2\equiv n_a\pmod2$, this proves the second
congruence. Subtracting $n_r^2$ proves the first. Each denominator
in $F_{\boldsymbol\varepsilon}$ is unchanged by
$\sigma\mapsto\sigma+1$, because $d_a^\vee$ is an integer.
The numerator acquires the factor
\[
 e^{2\pi ic}e^{2\pi i\mathbf n^T\mathbf u}
 e^{\pi i\mathbf n^T\mathsf A D^\vee\mathbf n}.
\]
The middle factor is $1$ because $\mathbf u$ is integral, and
the remaining sign is given by \eqref{eq:dual-parity}.
Absolute convergence permits this termwise calculation.
\end{proof}

\subsection{Recovery of the four product vectors}
\label{subsec:component-recovery}

The remaining task is to recover each prescribed component from the
available product evaluations. We combine the preceding identities
with the \(A\)-, \(B\)-, and \(A^\vee\)-evaluations in
\cite[Theorem~1.5(1)--(6)]{DuJiShenXu2026} and the signed
\(B^\vee\)-evaluation in
\cite[Proposition~1.11 and Theorem~1.12]{DuJiShenXu2026}.
For the \(A\)-families, the known product evaluations are denoted by
\begin{equation*}
\begin{aligned}
 f_j(\tau)&:=
 q^{(3r-2j-1)/2}\frac{J_{4j,16r-4}}{J_{1,4}}
 +\frac{\widetilde J_{2j,4r-1}}{J_{2,4}},
 \\
 f_j^\vee(2\tau)&:=\frac{J_{2j,4r-1}}{J_{1,4}}.
\end{aligned}
\end{equation*}
Here \(1\leq j\leq r\), and the rank \(r\) is suppressed in the
notation. The corresponding \(B\)-family evaluations are
\begin{equation*}
\begin{aligned}
 y_j(\tau)&:=
 q^{(6r-4j-5)/4}\frac{J_{4j,4(4r-3)}}{J_{1,4}}
 +\frac{\widetilde J_{2j,4r-3}}{J_{2,4}},
 \\
 y_j^\vee(2\tau)&:=\frac{J_{2j,4r-3}}{J_{1,4}}.
\end{aligned}
\end{equation*}
Here \(1\leq j\leq r-1\). The cited evaluations represent
\(f_j,f_j^\vee,y_j,y_j^\vee\) by generalized Nahm sums with
matrices \(A,A^\vee,B,B^\vee\), respectively; the last family is
signed. For the new evaluations \eqref{eq:A-primary},
\eqref{eq:A-dual}, \eqref{eq:B-primary}, and \eqref{eq:B-dual}, use
the notation
\begin{align*}
 F_j(\tau)&:=
 \frac{J_{4r+4j,16r-4}-q^{2j}J_{4r-4j,16r-4}}{J_{1,4}}
 +q^{j-r/2+1/2}
 \frac{\widetilde J_{2r+2j,4r-1}
       -\widetilde J_{2r-2j,4r-1}}{J_{2,4}},
 \\
 F_j^\vee(2\tau)&:=
 q^{2j-2r+1}
 \frac{J_{2r-2j,4r-1}-J_{2r+2j,4r-1}}{J_{1,4}},
 \\
 Y_j(\tau)&:=
 \frac{J_{4(r+j-1),4(4r-3)}
       -q^{2j}J_{4(r-j-1),4(4r-3)}}{J_{1,4}}
 \notag\\[-2pt]
 &\hspace{16mm}+
 q^{(4j-2r+1)/4}
 \frac{\widetilde J_{2r-2j-1,4r-3}
       -\widetilde J_{2r-2j-2,4r-3}}{J_{2,4}},
 \\
 Y_{r,j}^\vee(2\tau)&=
 q^{2j-2r+2}
 \frac{J_{2r-2j-1,4r-3}
       -J_{2r-2j-2,4r-3}}{J_{1,4}}.
\end{align*}

For \(F_j\) and \(F_j^\vee\), the range is \(1\leq j\leq r\);
for \(Y_j\) and \(Y_{r,j}^\vee\), it is \(1\leq j\leq r-1\).
The dual expressions are obtained by replacing \(q\) by \(q^2\)
in \eqref{eq:A-dual} and \eqref{eq:B-dual}. In this notation the cancellation identity of the first theorem is
\begin{equation}
 Y_{r,j}^\vee(2\tau)
 =\mathcal N^-_{r,j}(q^2)-q\mathcal N^+_{r,j}(q^2).
 \label{eq:Yvee-Npm}
\end{equation}

\begin{proof}[Proof of Theorem~\ref{thm:four-families}]
The bars and tildes on the auxiliary products below are labels,
not complex conjugation.

For the \(A\)-family, define
\[
 \widetilde g_i(\tau):=
 q^{(2i-r-1)/2}\frac{J_{8r-4i,16r-4}}{J_{1,4}}
 +\frac{\widetilde J_{2i-1,4r-1}}{J_{2,4}}.
\]
Substitution of the product formulas, using
\(J_{a,m}=J_{m-a,m}\) where necessary, gives
\begin{equation*}
 \widetilde g_i(\tau)=
 \begin{cases}
  q^{(1-r)/2}F_r(\tau),&i=1,\\[2pt]
  q^{(2i-r-1)/2}F_{r-i}(\tau)+f_i(\tau),
    &2\leq i\leq r-1,\\[2pt]
  f_{2r-i}(\tau),&r\leq i\leq2r-1.
 \end{cases}
\end{equation*}
Since
\[
 g_i(\tau)=q^{(4r-4i+1)^2/(32r-8)}\widetilde g_i(\tau),
\]
each \(g_i\) is a finite combination of the \(A\)-family Nahm
sums in the sense of Theorem~\ref{thm:four-families}.
In particular, the quadratic matrix and symmetrizer are unchanged.

For the \(A^\vee\)-family, consider the product quotient
\[
 \overline g_i^\vee(2\tau):=
 \frac{J_{2r-i,4r-1}}{J_{1,4}}.
\]
The symmetry \(J_{a,m}=J_{m-a,m}\) gives
\begin{equation}
 \overline g_i^\vee(2\tau)=
 \begin{cases}
  f_r^\vee(2\tau),&i=1,\\[2pt]
  f_{r-k}^\vee(2\tau),&i=2k,\quad1\leq k\leq r-1,\\[2pt]
 f_{r-k}^\vee(2\tau)-q^{2r-i}F_k^\vee(2\tau),
     &i=2k+1,\quad1\leq k\leq r-2,\\[2pt]
  f_1^\vee(2\tau)-qF_{r-1}^\vee(2\tau),&i=2r-1.
 \end{cases}
 \label{eq:combination-Adual}
\end{equation}
For a product quotient \(P(2\tau)\), the notation \(P(2\tau;Q)\)
denotes the same expression with \(q\) replaced by \(Q\). Fractional
powers retain the upper-half-plane convention. Then
\begin{align*}
 g_i^\vee(2\tau)
 =\frac12q^{(2i^2-2i-2r+1)/(16r-4)}
 \left(
  \overline g_i^\vee(2\tau;q)
  +(-1)^{\binom i2}\overline g_i^\vee(2\tau;-q)
 \right).
\end{align*}
With \(\sigma=2\tau\), the replacement \(q\mapsto-q\) means
\(\tau\mapsto\tau+\tfrac12\), or equivalently
\(\sigma\mapsto\sigma+1\). Lemma~\ref{lem:half-period-closure}
shows that this changes only the phases and signs of the dual
Nahm summands, not their matrix \(A^\vee\) or symmetrizer
\(D^\vee\). Apply \eqref{eq:combination-Adual} at both nomes,
including the change of every explicit monomial prefactor. This gives the required combination of $A^\vee$-Nahm sums.
All dual linear coefficients in these identities, including those
in the cited evaluations, are integral. The translation lemma
therefore absorbs every half-period translate into signs at the
same nome $q^2$.  This
includes the previously unresolved odd indices under the stated
half-period convention; the case
\(i=2r-1\) was obtained in \cite[Remark~1.8(2)]{DuJiShenXu2026}.

For the \(B\)-family, set
\[
 \overline h_i(\tau):=
 q^{(4i-2r-1)/4}
 \frac{J_{4(2r-i-1),4(4r-3)}}{J_{1,4}}
 +\frac{\widetilde J_{2(2r-i-1),4r-3}}{J_{2,4}}.
\]
Then
\begin{equation*}
 \overline h_i(\tau)=
 \begin{cases}
  q^{(3-2r)/4}Y_{r-1}(\tau),&i=1,\\[2pt]
  q^{(4i-2r-1)/4}Y_{r-i}(\tau)+y_{i-1}(\tau),
    &2\leq i\leq r-2,\\[2pt]
  y_{2r-i-1}(\tau),&\max\{2,r-1\}\leq i\leq2r-2.
 \end{cases}
\end{equation*}
Because
\[
 h_i(\tau)=q^{(4i-4r+1)^2/(32r-24)}\overline h_i(\tau),
\]
the displayed cases recover every component using the prescribed
\(B\)-family sums and their monomial multiples.

Finally, for the \(B^\vee\)-family, let
\[
 \overline h_i^\vee(2\tau):=
 \frac{J_{2r+i-2,4r-3}}{J_{1,4}}.
\]
Then
\begin{equation}
 \overline h_i^\vee(2\tau)=
 \begin{cases}
  y_{r-k-1}^\vee(2\tau),
     &i=2k+1,\quad0\leq k\leq r-2,\\[2pt]
  q^{2r-2k-2}Y_{r,k}^\vee(2\tau)+y_{r-k-1}^\vee(2\tau),
     &i=2k,\quad1\leq k\leq r-2,\\[2pt]
  Y_{r,r-1}^\vee(2\tau),&i=2r-2.
 \end{cases}
 \label{eq:combination-Bdual}
\end{equation}
With the same convention for the nome, we have
\[
 h_i^\vee(2\tau)
 =\frac12q^{(i^2-i-r+1)/(8r-6)}
 \left(
  \overline h_i^\vee(2\tau;q)
  +\overline h_i^\vee(2\tau;-q)
 \right).
\]
Apply \eqref{eq:combination-Bdual} at \(q\) and \(-q\), again
translating the complete expressions, and use
Lemma~\ref{lem:half-period-closure}. Every component is thereby
expressed as a finite combination of signed \(B^\vee\)-Nahm sums
at the nome $q^2$: the translation lemma again absorbs the
half-period translates into signs, with matrix $B^\vee$ and
symmetrizer $D^\vee$ retained. Equivalently, the signed sums can
be expressed as differences of parity-restricted sums. The even
indices \(2\leq i\leq2r-4\) supply the range requested in
\cite[Problem~1.13(d)]{DuJiShenXu2026}. Every monomial prefactor has rational exponent, so it is
absorbed into the constant parameter $c$ in the multiple sum;
any phase produced by a translation is part of the scalar
coefficient. This proves the precise representation class stated
in Theorem~\ref{thm:four-families} for all four vectors.
\end{proof}

\end{document}